\newcommand{\eps}{\varepsilon}
\newcommand{\nl}{\mathcal S_-}
\newcommand{\n}{\mathcal S}
\newcommand{\np}{\mathcal S_+}
\renewcommand{\v}{\mathcal{N}}
\newcommand{\vl}{\mathcal{N_-}}
\newcommand{\vp}{\mathcal{N_+}}
\renewcommand{\l}{\mathcal{W}}
\renewcommand{\ln}{\mathcal{W_-}}
\newcommand{\lv}{\mathcal{W_+}}
\newcommand{\p}{\mathcal{E}}
\newcommand{\pn}{\mathcal{E_-}}
\newcommand{\pv}{\mathcal{E_+}}
\newcommand{\nz}{n\mathbb{Z}^2}
\newcommand{\m}{P}
\newcommand{\zz}{\mathbb{Z}^2}
\newcommand{\rr}{\mathbb{R}^2}
\newcommand{\real}{\mathbb{R}}
\newcommand{\z}{\mathbb{Z}}
\newcommand{\set}[2]{\{#1 \colon #2 \}}
\newcommand{\diag}{\mathop{\mathrm{diag}}}
\newcommand{\floor}[1]{\lfloor #1\rfloor}
\newcommand{\Floor}[1]{\left\lfloor #1\right\rfloor}
\newcommand{\ceil}[1]{\lceil #1\rceil}
\newcommand{\Ceil}[1]{\left\lceil #1\right\rceil}
\newtheorem{theorem}{Theorem}[section]
\newtheorem{lemma}[theorem]{Lemma}
\newtheorem{proposition}[theorem]{Proposition}
\newtheorem{corollary}[theorem]{Corollary}
\newtheorem*{maintheorem}{Main Theorem}
\newtheorem*{subtheorem-a}{Sub-Theorem A}
\newtheorem*{subtheorem-b}{Sub-Theorem B}
\newtheorem*{subtheorem-c}{Sub-Theorem C}
\theoremstyle{definition}
\newtheorem{definition}[theorem]{Definition}
\newtheorem{example}[theorem]{Example}
\theoremstyle{remark}
\newtheorem{remark}[theorem]{Remark}
\numberwithin{equation}{section}
\begin{document}

\title{Existence of sublattice points in lattice polygons}
\author[N.~Bliznyakov]{Nikolai Bliznyakov}
\address{Faculty of Mathematics, Voronezh State University, 1 Universitetskaya 
pl., Voronezh, 394006, Russia}{}
\email{bliznyakov@vsu.ru}
\author[S.~Kondratyev]{Stanislav Kondratyev}
\address[S.~Kondratyev]{CMUC, Department of
Mathematics, University of Coimbra, 3001-501 Coimbra, Portugal}{}
\email{kondratyev@mat.uc.pt}

\subjclass[2010]{52C05, 52B20, 11H06, 11P21} 



\begin{abstract}
We state the formula for the critical number of vertices of a convex lattice 
polygon that guarantees that the polygon contains at least one point of a 
given sublattice and give a partial proof of the formula.  We show that the 
proof can be reduced to finding upper bounds on the number of vertices in 
certain classes of polygons.  To obtain these bounds, we establish 
inequalities relating the number of edges of a broken line and the coordinates 
of its endpoints within a suitable class of broken lines.
\end{abstract}

\keywords{integer polygons, lattice-free polygons, lattice diameter, integer 
broken lines}

\maketitle

\section{Introduction}

The study of lattice point in convex sets is a classical subject.  The 
starting point was Minkowski's Convex Body Theorem, which became the 
foundation of the geometry of numbers.  The theorem states that if a compact 
set in $\mathbb R^d$ is symmetric with respect to origin and has volume at 
least $2^d$, then it contains a point of the integer lattice~$\z^d$.  Notably, 
the constant~$2^d$ cannot be improved.  This theorem has quite a few 
modificatioins and generalisations, see e.~g.\ the nice short 
survey~\cite{Sco88}.  

There are numerous results concerning lattice points in various regions, see 
e.~g.~\cite{Cas12, GL87, EGH89, BR09}.  The regions at issue can be either 
general convex and nonconvex sets or polyhedra.  Among more recent works we 
note the following that are close to ours.  The papers \cite{Lov89, BCCZ10, 
MD11, Ave13} deal with the largest possible number of facets of maximal 
lattice-free polytopes.  The papers \cite{Hen83, LZ91, Pik01, AKN15} study 
properties of lattice polytopes having a specified (positive) number of 
interior lattice points such as upper bounds for the volume and the number of 
sublattice points and a classification of such polytopes.  The 
papers~\cite{Rab89, Rab90} deal with similar issues for polygons.

Besides, there are other interesting results about lattice polygons, such as 
\cite{Sco88, AZ95, Sim90}, not to mention the well-known Pick's theorem.

In this paper we consider the natural problem of relating the existence of 
sublattice points in a convex lattice polygon to the number of vertices (or 
edges) of the polygon.

In higher dimensions, a large number of faces cannot guarantee that the 
polytope will contain a point of a given sublattice.  For instance, there is 
no upper bound for the number of vertices and facets of polytopes in~$\mathbb 
R^3$ free of points of $(2\z)^3$.

Surprisingly, things are different in two dimensions.  It was noticed 
in~\cite{Bli00} that any convex integer pentagon on the plain contains a point 
of the lattice $(2\z)^2$.  In this paper we show that any convex integer 
polygon with many enough vertices contains at least one point of a given 
sublattice (of maximal rank) of~$\zz$.

In the spirit of the Minkowski Convex Body Theorem, our main goal is to state 
an explicit formula for the critical number of vertices ensuring that the 
polygon contains a point of a given sublattice.  The Main Theorem stated in 
Section~\ref{sec:mt:mt} provides this formula.

To put it the other way around, the Main Theorem gives an optimal upper bound 
on the number of vertices of a convex lattice polygon free of points of a 
given sublattice.  Clearly, convexity is essential for this bound to exist, 
but we do not impose other requirements on the lattice polygons.

The proof of the Main Theorem can be naturally reduced to estimating the 
number of vertices of integer polygons free of points of the lattice~$\nz$.  
This can be broken up into two major steps.

First, we would like to obtain a feasible description of integer polygons free 
of points of~$\nz = (n \z)^2$.  A crucial property of such polygons is that 
each of them lies in a $\nz$-slab of $\nz$-width~3 
(Proposition~\ref{pr:slab}).  Using this as basis, we classify such polygons 
up to affine transformations preserving the lattice~$\nz$ into six types 
differing by imposed geometric constraints (Definition~\ref{def:types} and 
Theorem~\ref{th:types}).

The second step is to estimate the number of vertices for each type.  This 
requires subtle geometric analysis and can be quite technical in terms of 
computations.  In this paper we develop necessary tools in 
Section~\ref{sec:slopes} and apply them to one particular class of polygons, 
where the estimates can be derived immediately.  More technical cases are the 
subject of \cite{BKb}.

In order to obtain the estimates we break up the boundary of a polygon into 
several broken lines and translate geometrical constraints imposed on the 
polygon into Diaphantine inequalities.  Resulting inequalities relate the 
numbers of edges of the broken lines and the coordinates of their endpoints, 
which are also the parameters of the bounding box of the polygon.  A part of 
this paper is specifically devoted to developing tools for this translation. 
Theorem~\ref{th3-6}, Corollary~\ref{cor3-6}, and Theorem~\ref{th3-8} are the 
most noteworthy results in this direction.

The rest of the paper is organised as follows.

Section~\ref{sec:mt} is devoted to the overview of the results, the Main 
Theorem being stated in Section~\ref{sec:mt:mt} and Section~\ref{sec:synopsis} 
containing a detailed synopsis of the proof.  For convenience, the statement 
of the Main Theorem is split into Sub-Theorems~A,~B, and~C.

In Section~\ref{sec:slopes} we study a class of broken lines we call slopes.  
The first two subsections contain definitions and statements estimating the 
number of edges of a slope.  Then we show how these estimates can be applied 
to polygons and conclude the section by proving Sub-Theorems~C for a 
particular class of polygons.

We tried to keep to a minimum the number of proofs in Sections~\ref{sec:mt} 
and~\ref{sec:slopes}.  Rather, we collected technical proofs in subsequent 
sections.

In Section~\ref{sec:AB} we give fairly simple proofs of Sub-Theorems~A and~B 
that make use of the so-called parity argument and Pick's formula.  An easy 
particular case of Sub-Theorem~C is also proved there.

In Section~\ref{sec:diameter} we study certain properties of the lattice 
diameter of polygons that allow us to prove Proposition~\ref{pr:slab} and 
Theorem~\ref{th:types}.

Section~\ref{sec:pfs} provides the proofs of theorems of 
Section~\ref{sec:slopes}.

\section{Main theorem}
\label{sec:mt}

\subsection{Main theorem}
\label{sec:mt:mt}

Suppose that the system of vectors $\mathbf{a}_1,\ \mathbf{a}_2 \in \rr$ is 
linearly independent; then the set
\[\set{u_1\mathbf{a}_1+u_2\mathbf{a}_2}{u_1,u_2\in\z}\]
is called \emph{a lattice} spanned by $\mathbf{a}_1$, $\mathbf{a}_2$, and  
$\mathbf{a}_1$, $\mathbf{a}_2$ are called the \emph{basis} of the lattice.

\begin{example}
The vectors $\mathbf{e}_1=(1,0),\,\mathbf{e}_2=(0,1)$ span the \emph{integer 
lattice} denoted by $\zz$.  It is the set of points with both integral 
coordinates.  Those are called \emph{integer points}.
\end{example}

A lattice $\Gamma$ is called \emph{a sublattice} of a lattice $\Lambda$ if 
$\Gamma \subset \Lambda$.  If moreover $\Gamma \ne \Lambda$, $\Gamma$ is 
called \emph{a proper sublattice} of $\Lambda$.  In what follows we only 
consider sublattices of the integer lattice.

A lattice $\Lambda \subset \zz$ is spanned by the columns of a matrix $A = 
(a_{ij}) \in GL_2(\z)$ if and only if $\Lambda = A \zz = \set{A\mathbf u}{ 
\mathbf u\in\zz}$. Given $\Lambda$, the matrix $A$ is not uniquely defined.  
However, the numbers
\begin{equation*}
\delta = \gcd(a_{ij}),\ n = |\det A|/\delta
\end{equation*}
are independent of $A$.  They are called \emph{invariant factors} of 
$\Lambda$, and the pair $(\delta, n)$ is \emph{the invariant factor sequence 
of} $\Lambda$ (see e.~g.\ \cite{Nor12}).
\begin{example}
The lattice $n\zz = (n\z) \times (n\z) = \set{(nu_1, nu_2)}{u_1, u_2 \in \z}$, 
where $n$ is a positive integer, has invariant factor sequence $(n, n)$.
\end{example}
\begin{example}
The lattice $\delta \z \times n \zz = \set{(\delta u_1, nu_2)}{u_1, u_2 \in 
\zz}$, where $\delta$ and $n$ are positive integers and $\delta$ divides $n$, 
has invariant factor sequence $(\delta, n)$.
\end{example}

The \emph{convex polygon} is a two-dimensional polytope, i.~e.\ the convex 
hull of a finite set of points that has nonempty interior.  In what follows we 
only consider convex polygons, so we often drop the word `convex'.  We assume 
that the reader is familiar with basic terminology such as vertex and edge, 
see \cite{Gru03, Zie95} for details.  A polygon with $N$ vertices, $N \ge 3$, 
is called an $N$-gon.  The vertices of an \emph{integer polygon} belong to 
$\zz$.  More generally, if all the vertices of a polygon belong to a lattice 
$\Gamma$, it is called a \emph{$\Gamma$-polygon}.  Integer polygons are also 
called \emph{lattice polygons}, but to avoid misunderstanding, we prefer the 
first term, since we consider $\Gamma$-polygons with different lattices 
$\Gamma$.

Given a sublattice $\Lambda$ of $\zz$ with invariant factor sequence $(\delta, 
n)$, define
\[\nu(\Lambda) = \nu(\delta, n) =2n+2\min\{\delta,3\}-3.\]

\begin{maintheorem}
Let $\Lambda$ be a proper sublattice of $\zz$.  Then any convex integer 
polygon with~$\nu(\Lambda)$ vertices contains a point of $\Lambda$.
\end{maintheorem}

It is easily seen that the constant $\nu(\Lambda)$ in the Main Theorem is 
sharp, i.~e.\ if  $\nu(\Lambda) > 3$ for given $\Lambda$, then there exist 
$(\nu(\Lambda)-1)$-gons containing no points of $\Lambda$.  This is very clear 
in case $\Lambda = \delta \z \times n \z$ (see Figure~\ref{fig:existence}).  
The general case follows from the fact that any lattice with invariant factor 
sequence $(\delta, n)$ is the image of $\delta \z \times n \z$ under a linear 
transformation preserving the integer lattice, see Section~\ref{sec:prelim}.

For a synopsis of the proof of the Main Theorem, see 
Section~\ref{sec:synopsis}.

\begin{figure}
\includegraphics{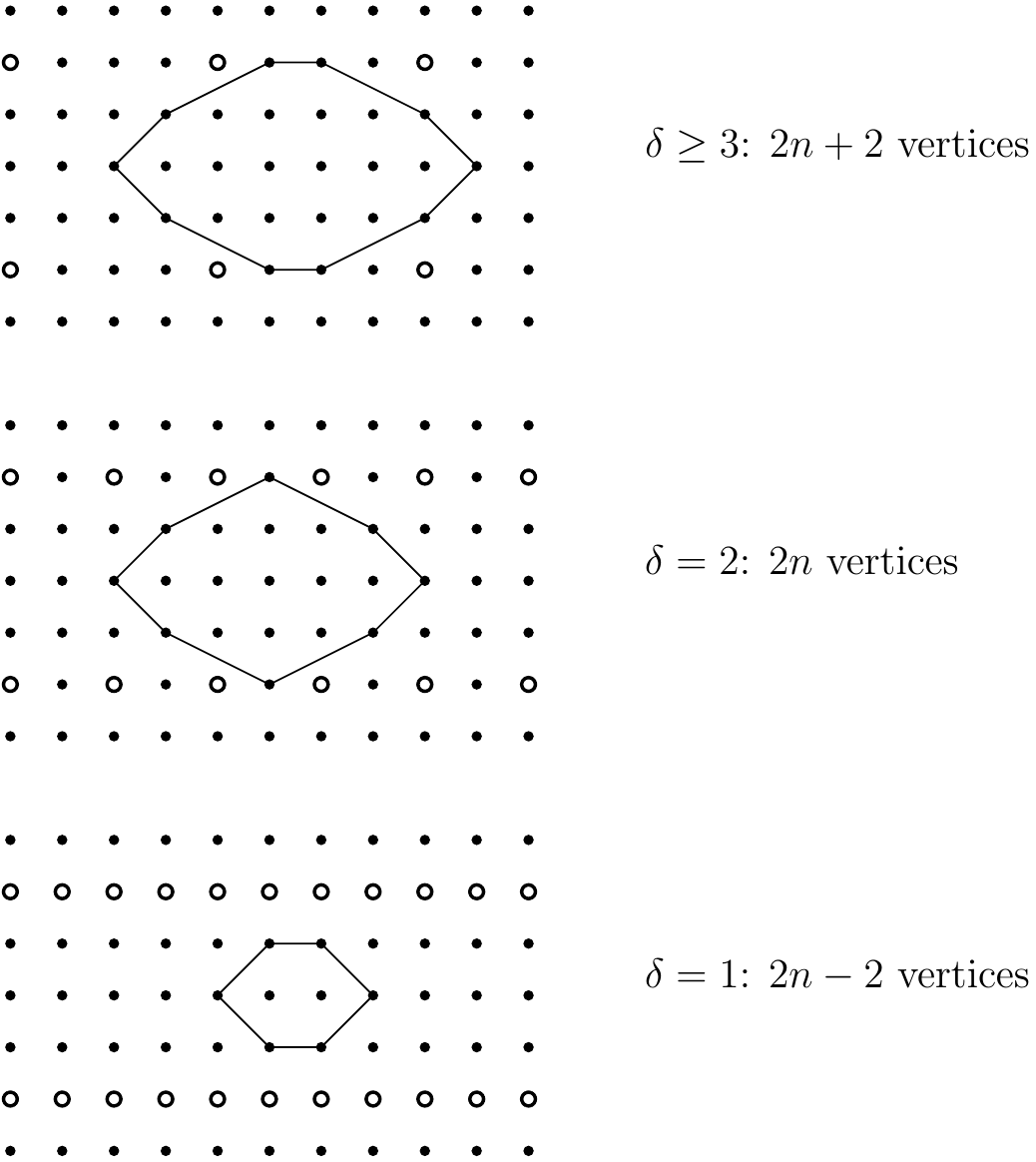}
\caption{If $\delta \ge 3$, it is easy to construct a polygon lying in the 
slab $0 \le x_2 \le n$ and having two vertices on each of the lines $x_2 = j$, 
where $j = 0, \dots, n$, such that the vertices belonging to the lines $x_2 = 
0$ and $x_2 = n$ lie between adjacent points of $\Lambda$.  Clearly, such a 
polygon is free of points of $\Lambda$ and has $2n + 2 = \nu(\delta, n) - 1$ 
vertices.  If $\delta = 1$, the construction is similar, only the polygon 
should have one vertex on each of the lines $x_2 = 0$ and $x_2 = n$ not 
belonging to $\Lambda$.  If $\delta = 0$, it suffices to take any integer 
polygon lying in the slab $1 \le x_2 \le n - 1$ having two vertices on each of 
the lines $x_2  = j$, where $j = 1, \dots, n - 1$.}
\label{fig:existence}
\end{figure}

\subsection{Preliminaries}
\label{sec:prelim}

In this section we list a few familiar properties of lattices.  The proofs can 
be found in \cite{Cas12, GL87, Gru07, BR09}.

We always denote the vectors of the standard basis of~$\rr$ by $\mathbf e_1 = 
(1, 0)$ and $\mathbf e_2 = (0, 1)$ and the standard coordinates in~$\rr$ by 
$x_1$, $x_2$.

Note that any lattice is a subgroup of the additive group of the linear space 
$\rr$ and a free abelian group of rank 2.

A matrix $A \in M_2(\z)$ is called \emph{unimodular}, if $\det A = \pm 1$.  

\begin{proposition}
\label{pr:th1-1}
Let $(\mathbf f_1, \mathbf f_2)$ be a basis of a lattice~$\Lambda$; then the 
vectors $a_{i1} \mathbf f_j + a_{i2} \mathbf f_2$, where $i = 1, 2$, form a 
basis of $\Lambda$ if and only if the matrix $(a_{ij})$ is unimodular.
\end{proposition}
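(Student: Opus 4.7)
The plan is to translate the proposition into a statement about a single $2 \times 2$ integer matrix and invoke the fact that an integer matrix is invertible over $\z$ iff its determinant is $\pm 1$.

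First I would fix notation by setting $\mathbf g_i = a_{i1}\mathbf f_1 + a_{i2}\mathbf f_2$ for $i = 1,2$ and writing $A = (a_{ij})$, so that the transition from the basis $(\mathbf f_1,\mathbf f_2)$ to the tuple $(\mathbf g_1,\mathbf g_2)$ is encoded by $A$. Since $(\mathbf f_1,\mathbf f_2)$ is a basis of~$\Lambda$, a vector $\mathbf v \in \rr$ lies in $\Lambda$ precisely when $\mathbf v = c_1\mathbf f_1 + c_2\mathbf f_2$ with $c_1,c_2 \in \z$, and such a representation is unique; this uniqueness will be the workhorse of the argument.

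For the sufficiency direction, I would assume $A$ unimodular. Then $A^{-1}$ has integer entries, so $\mathbf f_1$ and $\mathbf f_2$ are themselves $\z$-linear combinations of $\mathbf g_1$ and $\mathbf g_2$. Combined with the fact that every element of $\Lambda$ is a $\z$-combination of the $\mathbf f_i$, this shows that $(\mathbf g_1,\mathbf g_2)$ span $\Lambda$ over~$\z$. Linear independence over $\rr$ follows from $\det A = \pm 1 \ne 0$ together with the linear independence of $\mathbf f_1,\mathbf f_2$, so $(\mathbf g_1,\mathbf g_2)$ is indeed a basis.

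For the necessity direction, I would suppose $(\mathbf g_1,\mathbf g_2)$ is a basis of $\Lambda$. Since $\mathbf f_1,\mathbf f_2 \in \Lambda$, there exists an integer matrix $B = (b_{ij})$ with $\mathbf f_i = b_{i1}\mathbf g_1 + b_{i2}\mathbf g_2$. Substituting back and using uniqueness of the expansion in the basis $(\mathbf f_1,\mathbf f_2)$ yields $BA = I$, hence $\det A \cdot \det B = 1$ in~$\z$, which forces $\det A = \pm 1$. The main technical obstacle, which is minor, is just to keep the rows-versus-columns convention consistent (the paper's other matrices act on column vectors, but here the $a_{ij}$ appear as coefficients of a row-style combination), so I would state the matrix identity carefully once and let the rest follow.
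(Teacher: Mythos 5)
Your proof is correct. The paper does not actually supply an argument for this proposition --- it is listed among the ``familiar properties'' of lattices in Section~2.2 with the proofs deferred to the cited references --- and your change-of-basis argument (integer inverse gives spanning, uniqueness of the $\mathbf f$-expansion gives $BA=I$ and hence $\det A=\pm1$) is exactly the standard one found there, so there is nothing to flag beyond noting that the $\mathbf f_j$ in the statement is a typo for $\mathbf f_1$.
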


For brevity, we write that $\Lambda$ is a $(\delta, n)$-lattice if it is a 
sublattice of~$\zz$ with invariant factor sequence $(\delta, n)$.  The number 
$\delta n$ is called the \emph{determinant} of~$\Gamma$ and denoted $\det 
\Gamma$.

We use the term `$\Lambda$-point' as a synonym of `point of $\Lambda$'.

A linear transformation of the plane is called a (linear) automorphism of a 
lattice if it maps the lattice onto itself.  It is easily seen that a linear 
transformation is an automorphism of a lattice if and only if it maps some 
(hence, any) basis of the lattice onto another basis.  Consequently, given a 
matrix $A \in M_2(\real)$, the transformation $\mathbf x \mapsto A\mathbf x$ 
is an automorphism of $\zz$ if and only if the matrix~$A$ is unimodular.  We 
call such transformation \emph{unimodular}.  For any positive integer $n$, the 
automorphisms of $\nz$ are exactly unimodular transformations.

Clearly, linear automorphisms of a lattice form a group.

Let $\Lambda$ be a lattice.  A vector~$\mathbf f \in \Lambda$ is called 
\emph{$\Lambda$-primitive}, if any representation $\mathbf f = u \mathbf g$ 
with $g \in \Lambda$ and $u \in \z$ implies $u = \pm 1$.

\begin{proposition}
\label{pr:uni}
Suppose that $\Lambda$ is a lattice and $\mathbf f$ and $\mathbf g$ are 
$\Lambda$-primitive vectors; then there exists an automorphism~$A$ 
of~$\Lambda$ such that $A \mathbf f = \mathbf g$.
\end{proposition}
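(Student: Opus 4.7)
The plan is to show that every $\Lambda$-primitive vector can be extended to a basis of~$\Lambda$, and then to define the desired automorphism by sending one such basis onto the other. The conclusion will then be automatic from Proposition~\ref{pr:th1-1}.

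First I would fix an auxiliary basis $(\mathbf h_1, \mathbf h_2)$ of~$\Lambda$ and write $\mathbf f = p\mathbf h_1 + q\mathbf h_2$ for integers $p, q$. I claim that $\Lambda$-primitivity of~$\mathbf f$ is equivalent to $\gcd(p, q) = 1$. Indeed, if $d = \gcd(p, q) \ge 2$, then $(p/d)\mathbf h_1 + (q/d)\mathbf h_2 \in \Lambda$ and $\mathbf f$ is $d$ times this vector, contradicting primitivity; conversely, if $\gcd(p, q) = 1$ and $\mathbf f = u \mathbf g$ with $\mathbf g = r \mathbf h_1 + s \mathbf h_2 \in \Lambda$ and $u \in \z$, then $u$ divides both $p$ and $q$, so $u = \pm 1$.

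Next, using B\'ezout's identity, I would pick integers $a, b$ with $pa - qb = 1$. Then the matrix
\[
\begin{pmatrix} p & b \\ q & a \end{pmatrix}
\]
is unimodular, so by Proposition~\ref{pr:th1-1} the vectors $\mathbf f = p \mathbf h_1 + q \mathbf h_2$ and $\mathbf f' = b \mathbf h_1 + a \mathbf h_2$ form a basis of~$\Lambda$. Applying the same construction to~$\mathbf g$, I obtain a basis $(\mathbf g, \mathbf g')$ of~$\Lambda$.

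Finally, since $(\mathbf f, \mathbf f')$ and $(\mathbf g, \mathbf g')$ are two bases of~$\Lambda$ in~$\rr$, they are each linearly independent over~$\real$, and there is a unique linear map $A \colon \rr \to \rr$ with $A \mathbf f = \mathbf g$ and $A \mathbf f' = \mathbf g'$. This $A$ maps a basis of~$\Lambda$ onto a basis of~$\Lambda$, hence maps $\Lambda$ onto itself, so it is an automorphism of~$\Lambda$ (as noted right after Proposition~\ref{pr:th1-1}). The only subtle point worth double-checking is the equivalence between primitivity and coprimality of coordinates; once that is in place, the rest is a straightforward application of B\'ezout and Proposition~\ref{pr:th1-1}.
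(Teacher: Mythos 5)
Your proof is correct. The paper itself does not prove Proposition~\ref{pr:uni} (it is listed among the ``familiar properties of lattices'' whose proofs are referred to the standard literature), and your argument is exactly the canonical one: a vector is $\Lambda$-primitive if and only if its coordinates in any basis of~$\Lambda$ are coprime, B\'ezout then extends each primitive vector to a basis via a unimodular matrix and Proposition~\ref{pr:th1-1}, and the linear map carrying one basis to the other is the required automorphism. All steps check out, including the determinant computation $pa - qb = 1$ and the observation that mapping a basis of~$\Lambda$ onto a basis of~$\Lambda$ forces $A\Lambda = \Lambda$.
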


If $\Lambda$ is a sublattice of $\zz$ and $A$ is a unimodular transformation, 
the image $A\Lambda$ is a lattice with the same invariant factors as 
$\Lambda$.

The following proposition is a fundamental result about unimodular 
transformations.  It is a geometric version of the Smith normal form of 
integral matrices~\cite{Nor12}.

\begin{proposition}
\label{pr:snf}
For any sublattice of $\zz$ with invariant factors $(\delta, n)$ there exists 
a unimodular transformation mapping it onto the lattice $\delta \z \times n 
\z$.
\end{proposition}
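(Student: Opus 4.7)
The plan is to deduce the statement from the Smith Normal Form of integer matrices cited in~\cite{Nor12}: every matrix $B \in M_2(\z)$ admits a factorisation $B = U D V$ with $U, V \in GL_2(\z)$ and $D = \diag(d_1, d_2)$, $d_1 \mid d_2$.  The proposition is essentially the geometric translation of this factorisation into the language of lattices and unimodular transformations.

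First I would fix a basis $(\mathbf f_1, \mathbf f_2)$ of $\Lambda$ and form the integer matrix $B = (\mathbf f_1 \mid \mathbf f_2)$, so that $\Lambda = B\zz$.  Applying the Smith Normal Form yields unimodular matrices $U, V$ and positive integers $d_1 \mid d_2$ with $UBV = \diag(d_1, d_2)$.  The next step is to check that $(d_1, d_2) = (\delta, n)$: multiplication by matrices in $GL_2(\z)$ preserves both the gcd of entries and the absolute value of the determinant, so $d_1$ equals the gcd of entries of $B$, i.e.\ $d_1 = \delta$; and $d_1 d_2 = |\det B| = \delta n$ then forces $d_2 = n$.

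Setting $A = U$, which is unimodular, and using that $V^{-1}\zz = \zz$ because $V^{-1} \in GL_2(\z)$, one obtains
\[A\Lambda = UB\zz = \diag(\delta, n)\, V^{-1}\zz = \diag(\delta, n)\,\zz = \delta\z \times n\z,\]
which is the required transformation.

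The main obstacle is essentially bookkeeping, since the hard content is absorbed by the Smith Normal Form invoked as a black box.  For a self-contained proof I would instead proceed in two geometric steps: first reduce to $\delta = 1$ by passing to $\Lambda^{\ast} = \delta^{-1}\Lambda$, a sublattice of $\zz$ (well defined because every vector of $\Lambda$ has entries divisible by~$\delta$); then exploit that a sublattice with basis-gcd equal to~$1$ contains a $\zz$-primitive vector~$\mathbf g$, apply Proposition~\ref{pr:uni} to obtain a unimodular $T$ with $T\mathbf g = \mathbf e_1$, and identify the second basis vector of $T\Lambda^{\ast}$ as $(n/\delta)\mathbf e_2$ by projecting $T\Lambda^{\ast}$ onto the $x_2$-axis, the resulting subgroup of $\z$ having index $n/\delta$ by multiplicativity of lattice indices.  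In this alternative route the only mildly delicate point is the existence of the primitive vector, which I would obtain from Bezout's identity applied to the entries of the basis matrix of~$\Lambda^{\ast}$.
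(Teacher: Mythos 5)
Your derivation is correct and is exactly the route the paper intends: the paper gives no proof of Proposition~\ref{pr:snf}, presenting it as ``a geometric version of the Smith normal form'' and referring to the literature, and your argument is precisely that translation (with the correct observation that $GL_2(\z)$-multiplication preserves the entry-gcd and $|\det|$, forcing $(d_1,d_2)=(\delta,n)$). The alternative self-contained sketch at the end is also sound in outline, though, as you note, producing a $\zz$-primitive vector in $\delta^{-1}\Lambda$ needs more than a single application of Bezout (e.g.\ a minimal-content or mod-$p$ argument); since the black-box SNF route is complete, this does not affect the proof.
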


An \emph{affine frame} of a lattice~$\Lambda$ is a pair $(\mathbf o; \mathbf 
f_1, \mathbf f_2)$ consisting of a point $\mathbf o \in \Lambda$ and a basis 
$(\mathbf f_1, \mathbf f_2)$ of~$\Lambda$.  An \emph{integer frame} is an 
affine frame of~$\zz$.

An \emph{affine automorphism} of a lattice $\Lambda$ is an affine 
transformation of $\rr$ mapping $\Lambda$ onto itself.  It is not hard to see 
that given $A \in M_2(\real)$ and $\mathbf b \in \rr$, the mapping $\mathbf x 
\mapsto A \mathbf x + \mathbf b$ is an affine automorphism of $\Lambda$ if and 
only if $\mathbf x \mapsto A \mathbf x$ is an automorphism of $\Lambda$ and 
$\mathbf b \in \Lambda$.  In particular, affine automorphisms of $\nz$, where 
$n$ is a positive integer, are exactly the transformations of the form 
$\mathbf x \mapsto A \mathbf x + \mathbf b$, where $A$ is unimodular and 
$\mathbf b \in \nz$.

Of course, if $\m$ is a convex integer $N$-gon and $\varphi$ is an affine 
automorphism of~$\zz$, the image $\varphi(\m)$ is still a convex integer 
$N$-gon.  Obviously, is $\m$ is free of points of a lattice $\Lambda$, then so 
is its image under any affine automorphism of $\Lambda$.

We conclude with a nonstandard definition.

Let $\Lambda$ be a sublattice of~$\zz$ and $(\mathbf f_1, \mathbf f_2)$ be a 
basis of~$\zz$.  Clearly, $\{u \in \z \colon u \mathbf f_1 \in \Lambda\}$ is a 
subgroup of~$\z$.  It is generated by a positive integer, which we call the 
\emph{large $\mathbf f_1$-step} of~$\Lambda$ with respect to $(\mathbf f_1, 
\mathbf f_2)$.  Further, $\{u_1 \in \z \colon \exists\ u_2 \in z,\ u_1 f_1 + 
u_2 f_2 \in \Lambda\}$ is a subgroup of~$\z$, too.  We call its positive 
generator the \emph{small $\mathbf f_1$-step} of~$\Lambda$ with respect to 
$(\mathbf f_1, \mathbf f_2)$.  Alternatively, the small $\mathbf f_1$-step can 
be defined as the largest $s$ such that all the points of~$\Lambda$ lie on the 
lines $\{ks\mathbf f_1 + t\mathbf f_2\}$, $k \in \z$.  Obviously, the small 
step is smaller then the large step.  We can define the large and small 
$\mathbf f_2$-steps of $\Lambda$ with respect to~$(\mathbf f_1, \mathbf f_2)$ 
in the same way.

In what follows we nearly always consider small and large steps of lattices 
with respect to bases made up of the vectors $\pm \mathbf e_1$, $\pm \mathbf 
e_2$, and we usually omit the reference to the basis when there is no 
ambiguity.

\begin{proposition}
\label{pr:steps}
Let $\Lambda$ be a sublattice of~$\zz$ and $(\mathbf f_1, \mathbf f_2)$ be a 
basis of~$\zz$.  Then the product of the small $\mathbf f_1$-step and the 
large $\mathbf f_2$-step of~$\Lambda$ equals $\det \Lambda$.
\end{proposition}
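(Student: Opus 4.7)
The plan is to exhibit an explicit basis of $\Lambda$ adapted to the basis $(\mathbf f_1, \mathbf f_2)$ of $\zz$ and compute the determinant from it. Since $(\mathbf f_1, \mathbf f_2)$ is a basis of $\zz$, the change-of-basis matrix from $(\mathbf e_1, \mathbf e_2)$ is unimodular, so expressing vectors of $\Lambda$ in $(\mathbf f_1, \mathbf f_2)$-coordinates does not change $\det \Lambda$; I only need to compute the absolute value of the determinant of any basis of $\Lambda$ written in $(\mathbf f_1, \mathbf f_2)$-coordinates.

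Let $s$ denote the small $\mathbf f_1$-step and $\ell$ the large $\mathbf f_2$-step of $\Lambda$. By definition of $s$, there exists $t_0 \in \z$ such that $\mathbf g_1 := s \mathbf f_1 + t_0 \mathbf f_2 \in \Lambda$, and every $u_1 \mathbf f_1 + u_2 \mathbf f_2 \in \Lambda$ must satisfy $s \mid u_1$. By definition of $\ell$, the vector $\mathbf g_2 := \ell \mathbf f_2$ lies in $\Lambda$, and every $\mathbf f_2$-multiple in $\Lambda$ is an integer multiple of $\mathbf g_2$. I claim $(\mathbf g_1, \mathbf g_2)$ is a basis of $\Lambda$.

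To verify the claim, take an arbitrary $\mathbf v = u_1 \mathbf f_1 + u_2 \mathbf f_2 \in \Lambda$. The defining property of $s$ gives $u_1 = ks$ for some $k \in \z$. Then
\[
\mathbf v - k \mathbf g_1 = (u_2 - k t_0) \mathbf f_2 \in \Lambda,
\]
and the defining property of $\ell$ forces $u_2 - k t_0 = m \ell$ for some $m \in \z$. Hence $\mathbf v = k \mathbf g_1 + m \mathbf g_2$, so $(\mathbf g_1, \mathbf g_2)$ generates $\Lambda$; linear independence is obvious since $s \ne 0$.

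Finally, the matrix of $(\mathbf g_1, \mathbf g_2)$ in the basis $(\mathbf f_1, \mathbf f_2)$ is $\begin{pmatrix} s & 0 \\ t_0 & \ell \end{pmatrix}$, whose determinant has absolute value $s\ell$. Combined with the observation in the first paragraph, this yields $\det \Lambda = s \ell$. There is no real obstacle here; the only point requiring care is to invoke the right minimality property (small step for the $\mathbf f_1$-component, large step for pure $\mathbf f_2$-vectors) at the right moment of the division-with-remainder argument.
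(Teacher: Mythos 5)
Your proof is correct: the division-with-remainder argument correctly produces the triangular basis $(s\mathbf f_1 + t_0\mathbf f_2,\ \ell\mathbf f_2)$ of $\Lambda$ (essentially a Hermite normal form adapted to $(\mathbf f_1, \mathbf f_2)$), and the determinant computation is sound because the change of basis to $(\mathbf f_1,\mathbf f_2)$ is unimodular. The paper explicitly leaves this proof to the reader, so there is nothing to compare against, but your argument is the standard one the authors evidently had in mind.
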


The proof is left to the reader.

In what follows we use standard notations $\floor{\cdot}$ for the floor 
function, $\ceil{\cdot}$ for the ceiling function, $^+$ for the positive part, 
and $|\cdot|$ for the cardinality of a finite set.  As noted above, by 
$[\mathbf a, \mathbf b]$ we denote the segment with the endpoints~$\mathbf a$ 
and~$\mathbf b$.

\subsection{Synopsis of the proof}
\label{sec:synopsis}

It turns out that the Main Theorem can be fairly easily proved for $(1, 2)$- 
and $(2, 2)$-lattices.

In the case of the lattice $\Lambda = \z \times 2 \z$, the Main Theorem 
becomes
\begin{subtheorem-a}
Any convex integer polygon contains a point with an even ordinate.
\end{subtheorem-a}
Sub-Theorem~A implies the Main Theorem for arbitrary $(1, 
2)$-lattices~$\Lambda$, because if $\Lambda$ is such a lattice and $\m$ is an 
integer polygon, we can find a unimodular transformation $A$ such that $A 
\Lambda = \z \times 2\z$ (Proposition~\ref{pr:snf}); Sub-Theorem~A asserts 
that $AP$ contains an $A\Lambda$-point, so $\m$ contains a point of~$\Lambda$.

In the case of the lattice $2\zz$, the Main Theorem becomes
\begin{subtheorem-b}
Any convex integer pentagon contains a point of the lattice~$2\zz$.
\end{subtheorem-b}

This statement was announced in~\cite{Bli00}.

We prove Sub-Theorems~A and~B in Section~\ref{sec:AB}.

In proving the Main Theorem we adopt the strategy of estimating the number of 
vertices (equivalently, of edges) of polygons not containing points of given 
lattices.  We will presently see that we can concentrate on integer polygons 
free of $\nz$-points.  As we can always substitute such a polygon by its image 
under an affine automorphism of $\nz$, our first goal is to find out how 
significantly we can reduce the set of polygons to consider applying such 
automorphisms.  We use the following proposition as our basis.

\begin{proposition}
\label{pr:slab}
Given a convex integer polygon $\m$ free of $\nz$-points, where $n \in \z$, $n 
\ge 2$, there exists an automorphism $\psi$ of $\nz$ such that $\psi(\m)$ 
lies in the slab
$$-n+1\le x_1\le 2n-1.$$
\end{proposition}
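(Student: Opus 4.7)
The plan is to reduce the statement, via the correspondence between unimodular matrices and primitive $\zz$-directions, to a flatness-type claim for $\nz$, and then to dispose of the alignment by an $\nz$-translation. We seek $\psi$ in the form $\mathbf{x} \mapsto A\mathbf{x} + \mathbf{b}$ with $A$ unimodular and $\mathbf{b} \in \nz$. Since $(A\mathbf{x})_1 = (A^T\mathbf{e}_1) \cdot \mathbf{x}$, and since the first row $\mathbf{u} := A^T \mathbf{e}_1$ of $A$ may be taken to be any prescribed primitive $\zz$-vector (a direct consequence of Proposition~\ref{pr:uni}), it suffices to find a primitive $\mathbf{u} \in \zz$ and an integer $k$ such that
\begin{equation*}
\m \subset \{\mathbf{x} \in \rr : kn + 1 \leq \mathbf{u} \cdot \mathbf{x} \leq (k+3)n - 1\}.
\end{equation*}
Given such $\mathbf{u}$ and $k$, taking $A$ unimodular with first row $\mathbf{u}^T$ and $\mathbf{b} = -(k+1)n\mathbf{e}_1 \in \nz$ yields $\psi(\m) \subset \{-n + 1 \leq x_1 \leq 2n - 1\}$.

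The core of the argument is to establish the existence of this $\mathbf{u}$ and $k$: in some primitive direction $\mathbf{u}$, the polygon $\m$ fits strictly between two parallel $\nz$-lines at $\nz$-distance~$3$. This is a two-dimensional flatness-type theorem adapted to the sublattice $\nz$, and its proof, deferred to Section~\ref{sec:diameter}, hinges on the $\zz$-lattice diameter of $\m$ (the maximum number of consecutive collinear integer points in $\m$, taken over primitive $\zz$-directions). Heuristically, if the extent of $\m$ in every primitive direction exceeded $3n-2$, then four consecutive $\nz$-lines perpendicular to some direction would meet $\m$, and convexity combined with the integrality of $\m$'s vertices would force $\m$ to contain an $\nz$-point, contradicting the hypothesis.

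The main obstacle is this geometric flatness-type step, and in particular its boundary-strict form (the $+1$ and $-1$ in the slab bounds): an $\nz$-free integer polygon may touch an $\nz$-line at a non-$\nz$ integer point, so a ``closed'' three-slab containment does not automatically upgrade to a strict one. A careful choice of $\mathbf{u}$ among those realizing the minimum lattice width---again relying on the lattice diameter analysis of Section~\ref{sec:diameter}---is needed to ensure the strict bounds. The remaining unimodular change of coordinates and $\nz$-translation are routine bookkeeping.
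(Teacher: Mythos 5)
Your opening reduction --- finding a primitive $\mathbf u$ and an integer $k$ with $\m \subset \{kn+1 \le \mathbf u \cdot \mathbf x \le (k+3)n-1\}$ and then composing a unimodular map with an $\nz$-translation --- is correct and is indeed how the paper packages the conclusion. But that reduction is the routine part; the entire content of the proposition is the existence of such a $\mathbf u$ and $k$, and you do not prove it. You defer it and offer a heuristic that is false as stated: a convex integer polygon can meet arbitrarily many consecutive parallel $\nz$-lines without containing an $\nz$-point. For instance, for $n \ge 3$ the triangle with vertices $(0,1)$, $(Mn,1)$, $(0,2)$ lies in the slab $1 \le x_2 \le n-1$, hence is free of $\nz$-points, yet meets the $M+1$ consecutive lines $x_1 = jn$, $j = 0, \dots, M$. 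So ``large extent in some (or every) primitive direction forces an $\nz$-point via four crossed $\nz$-lines'' cannot be the mechanism; any correct argument must couple the extent in one direction with information about the perpendicular direction.

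That coupling is exactly what the paper supplies and what is missing from your proposal. The direction $\mathbf u$ is not chosen by a width-minimization or flatness argument: one takes a segment $[\mathbf a, \mathbf b] \subset \m$ realizing the lattice diameter (containing $\ell(\m)+1$ integer points), maps it by a unimodular transformation onto a vertical segment on a line $x_1 = c$ with $0 \le c \le n-1$ (Proposition~\ref{pr:uni} plus an $\nz$-translation), and then invokes Lemma~\ref{lem2-1}. That lemma rests on Lemma~\ref{lem:th2-1}, which bounds the polygon by $|x_1 - c| \le \ell(\m)+2$ and excludes integer points of $\m$ on the lines $x_1 = c \pm (\ell(\m)+1)$, followed by a genuinely delicate case analysis (normalizing the intersections with $x_1 = n$ and $x_1 = 2n$ by a shear in $\nz$, pinning the rightmost vertex to $v_1 \in \{2n, 2n+1\}$, and exhibiting a segment with $\ell(\m)+3$ integer points to reach a contradiction). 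You correctly flag that the strict bounds $kn+1$ and $(k+3)n-1$ are the hard point, but flagging the obstacle is not the same as overcoming it; as written, the proof is missing at its essential step.
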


The proposition is proved in Section~\ref{sec:diameter}.

\begin{remark}
\label{rem:slab}
The proof of Proposition~\ref{pr:slab} ensures that $\psi$ can be chosen in 
such a way that $\psi(\m)$ contains a segment with $\ell(\m) + 1$ integer 
points lying on a line of the form $x_1 = c$ with $0 \le c \le n$, where 
$\ell(\m)$ is the lattice diameter of~$\m$ (see Section~\ref{sec:diameter}).  
Moreover, $\psi$ can be chosen in such a way that if $\psi(\m)$ has common 
points with the lines $x_1 = 0$ and $x_1 = n$, they lie on the segments 
$[\mathbf 0, (0, n)]$ and $[(n, 0), (n, n)]$, respectively.
\end{remark}
\begin{remark}
If $\Lambda$ is a $(\delta, n)$-lattice, it is not hard to prove that $\nz 
\subset \Lambda$.  Proposition~\ref{pr:slab} immediately implies that the 
number of vertices of a polygon free of $\Lambda$-points cannot be greater 
than $2(3n - 2)$.  Of course, in view of the Main Theorem this fairly simple 
estimate is suboptimal.
\end{remark}

Proposition~\ref{pr:slab} allows for a classification of polygons free of 
points of~$\zz$ into feasible classes.

We say that a line or a segment \emph{splits} a polygon, if it divides the 
polygon into two parts with nonempty interior.

\begin{figure}
\includegraphics{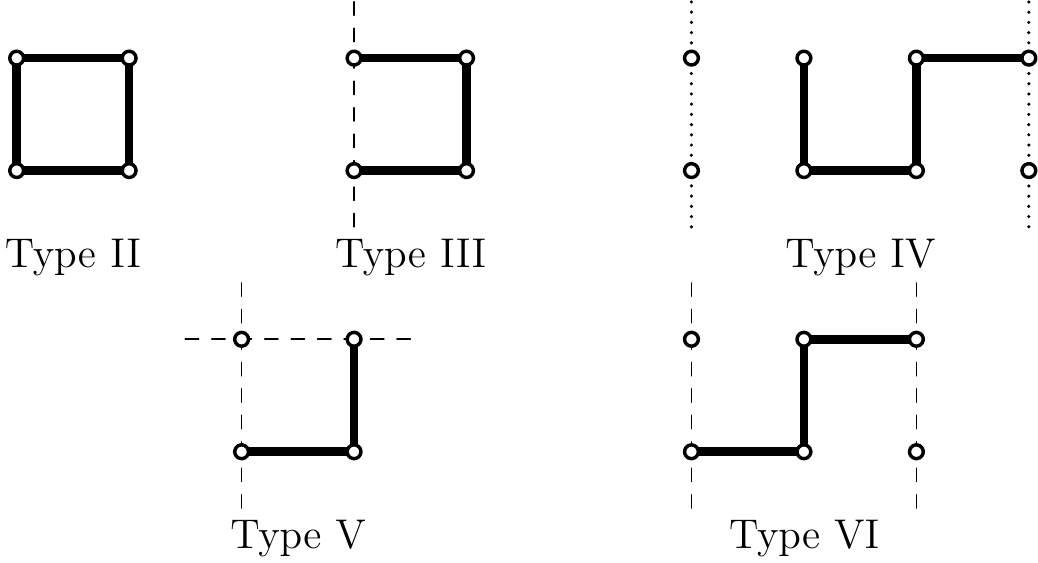}
\caption{Definition~\ref{def:types} introduces the types of polygons in terms 
of intersection with segments and lines.  Here thick segments split polygons 
of the specified type, thin lines do not split them, and dotted lines have no 
common points with them.}
\label{fig:types}
\end{figure}

Let $\m$ be an integer polygon free of points of~$\nz$ and $n$ be an integer, 
$n \ge 2$.

\begin{definition}
\label{def:types}
We say that $\m$ is a
\begin{itemize}
\item
\emph{type~I$_n$ polygon}, if no line of the form $x_1 = jn$ or $x_2 = jn$ 
where $j \in \z$, splits $\m$, or, equivalently, if $\m$ lies in a slab of the 
form $jn \le x_1 \le (j+1)n$ or $jn \le x_2 \le (j+1)n$, where $j \in 
\z$;
\item
\emph{type~II$_n$ polygon}, if each of the segments $[\mathbf{0},(n,0)]$, 
$[(n,0),(n,n)]$, $[(0,n),(n,n)]$, and $[\mathbf{0},(0,n)]$ splits~$\m$;
\item
\emph{type~III$_n$ polygon}, if each of the segments $[\mathbf{0},(n,0)]$, 
$[(n,0),(n,n)]$, and $[(n,n),(0,n)]$ splits~$\m$, and the line $x_1 = 0$ does 
not split $\m$;
\item
\emph{type~IV$_n$ polygon}, if each of the segments $[\mathbf{0},(0,n)]$, 
$[\mathbf 0, (n, 0)]$, $[(n, 0), (n, n)]$, and $[(n, n), (2n, n)]$ splits~$\m$ 
and~$\m$ has no common points with the lines $x_1 = -n$ and $x_n = 2n$;
\item
\emph{type~V$_n$ polygon}, if each of the segments $[\mathbf 0, (-n ,0)]$ and 
$[\mathbf 0, (0, n)]$ splits $\m$ and the lines $x_1 = -n$ and $x_2 = n$ do 
not split~$\m$;
\item
\emph{type~VI$_n$ polygon}, if each of the segments $[\mathbf 0, (-n ,0)]$, 
$[\mathbf 0, (0, n)]$, and $[(0, n), (n, n)]$ splits~$\m$, and the lines $x_1 
= \pm n$ do not split~$\m$.
\end{itemize}
\end{definition}

The polygon types are illustrated on Figure~\ref{fig:types}.

\begin{figure}
\includegraphics{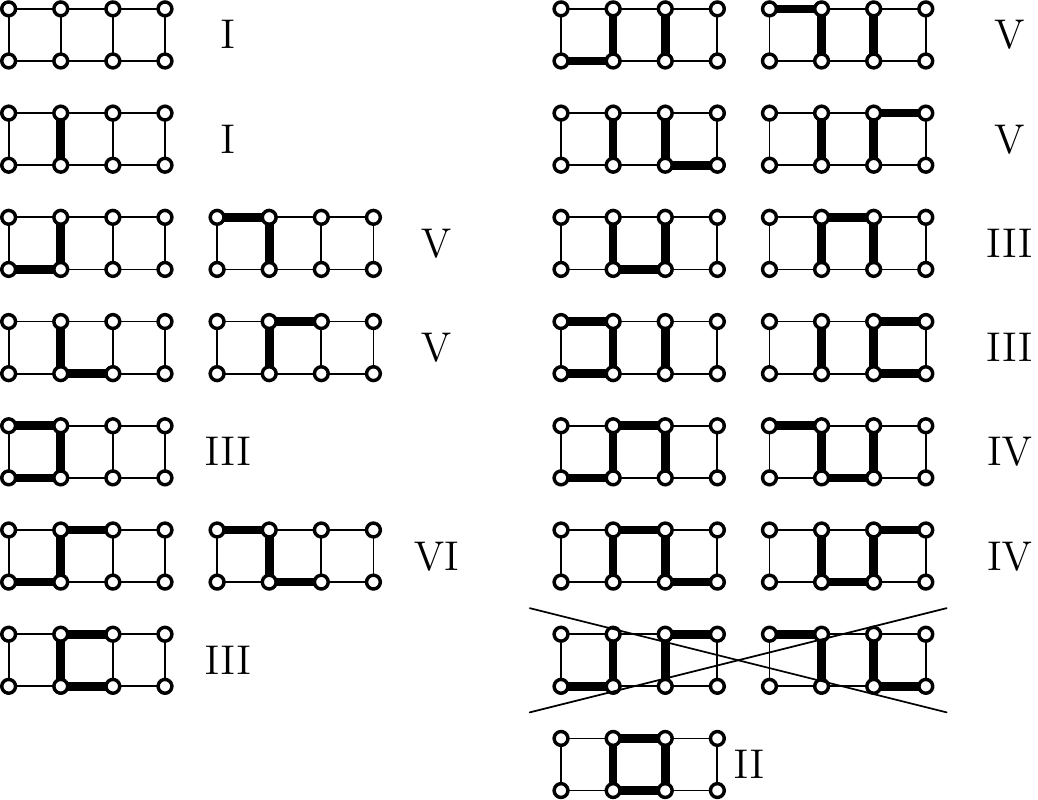}
\caption{The polygon $\m'$ obtained in the proof of Theorem~\ref{th:types} has 
no common points with the lines $x_1 = -n$ and $x_1 = 2n$ and may or may not 
be split by the eight segments.  Two configurations are ruled out by 
Lemma~\ref{lem:imposs}.  For any remaining combination of splitting segments 
(drawn as thick lines), $\m'$ can be easily mapped onto a polygon of specified 
type.}
\label{fig:types-pf}
\end{figure}

\begin{theorem}
\label{th:types}
Suppose that an integer polygon $\m$ is free of points of the lattice $\nz$, 
where $n \in \z$, $n\ge2$; then there exists an affine automorphism $\varphi$ 
of $\nz$ such that $\varphi(\m)$ is a polygon of one of the types 
I$_n$--VI$_n$.
\end{theorem}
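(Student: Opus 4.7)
The plan is to begin by invoking Proposition~\ref{pr:slab} and Remark~\ref{rem:slab} to replace $\m$ with $\m' := \psi(\m)$ contained in the slab $-n+1 \le x_1 \le 2n-1$, arranged so that any common points of $\m'$ with the lines $x_1 = 0$ and $x_1 = n$ lie on the segments $[\oo, (0,n)]$ and $[(n,0),(n,n)]$ respectively. Consequently the lines $x_1 = -n$ and $x_1 = 2n$ miss $\m'$ altogether, and among the vertical lattice lines $x_1 = jn$ only $x_1 = 0$ and $x_1 = n$ can split $\m'$.

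Next I would classify $\m'$ by recording, for each of eight distinguished unit segments of~$\nz$, whether it splits $\m'$: the six horizontal segments $[(jn, kn), ((j+1)n, kn)]$ with $j \in \{-1, 0, 1\}$ and $k \in \{0, 1\}$, together with the two interior vertical segments $[\oo, (0,n)]$ and $[(n,0),(n,n)]$. I would then invoke Lemma~\ref{lem:imposs} to rule out the two splitting patterns pictured in Figure~\ref{fig:types-pf}; the proof of that lemma should combine convexity of $\m'$ with the $\nz$-free hypothesis, showing that a configuration in which an outer horizontal segment splits $\m'$ while a neighbouring central one does not would either engulf a point of~$\nz$ or violate the normalisation of Remark~\ref{rem:slab}. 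For each of the remaining patterns, I would exhibit an explicit composition of translations by vectors in~$\nz$, coordinate sign changes $x_i \mapsto -x_i$, and the swap $\mathbf e_1 \leftrightarrow \mathbf e_2$---all affine automorphisms of~$\nz$---that maps $\m'$ onto a polygon satisfying the defining conditions of exactly one of the six types~I$_n$--VI$_n$. Patterns in which neither $x_1 = 0$ nor $x_1 = n$ splits $\m'$ are absorbed, after a translation by $n\mathbf e_1$ if necessary, into type~I$_n$; the patterns in which at least one of these lines splits $\m'$ then distribute among the other five types according to which sides of the fundamental square and its neighbours participate.

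The principal obstacle is the proof of Lemma~\ref{lem:imposs}; once that is in hand, matching the surviving configurations to the six types is essentially bookkeeping, considerably simplified by the residual symmetries that remain after the slab has been fixed. The key technical point to keep track of throughout is that only automorphisms of $\nz$ (not of the finer lattice $\zz$) are admissible, so translations are restricted to vectors in~$\nz$---which is exactly the freedom needed to slide $\m'$ across entire rows or columns of fundamental squares when reducing to one of the prescribed types.
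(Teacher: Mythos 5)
Your proposal follows essentially the same route as the paper: normalise via Proposition~\ref{pr:slab} and Remark~\ref{rem:slab}, record which of the same eight segments split $\m'$, rule out two configurations by Lemma~\ref{lem:imposs}, and map each surviving pattern onto one of the six types by an affine automorphism of~$\nz$. The only quibble is your guessed characterisation of what Lemma~\ref{lem:imposs} forbids (it excludes the two \emph{diagonal} pairs of outer horizontal segments splitting simultaneously, proved via the lattice diameter), but since you invoke the lemma as stated this does not affect the argument.
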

\begin{proof}
Let $\psi$ an automorphism as in Proposition~\ref{pr:slab} and 
Remark~\ref{rem:slab} and $\m' = \psi(\m)$.

As $\m'$ is free of~$\nz$-points, it is clear that $\m'$ may be split by at 
most one of the three segments $I_1 = [\mathbf 0, (-n, 0)]$, $I_2 = [\mathbf 
0, (n, 0)]$, and $I_3 = [(n, 0), (2n, 0)]$, and at most one of the three 
segments $J_1 = [(-n, n), (0, n)]$, $J_2 = [(0, n), (n, n)]$, and $J_3 = [(n, 
n), (2n, n)]$.

If the lines $x_1 = 0$ and $x_1 = n$ do not split $\m'$, it is a type~I$_n$ 
polygon.

If exactly one of the lines $x_1 = 0$ and $x_1 = n$, then there is no loss of 
generality in assuming it is the former, because otherwise we can 
replace~$\m'$ by its reflection about the line $x_1 = n/2$, the reflection 
being an automorphism of $\nz$.  Thus, the segment $[\mathbf 0, (0, n)]$ 
splits $\m'$ and the segments $I_3$ and $J_3$ have no common points with 
$\m'$.  Individually examining the possibilities according to which of the 
segments $I_{1,2}$ and $J_{1,2}$ split $\m'$, we see that in each case the 
polygon either is of one of the types I$_n$--VI$_n$ or can be trivially mapped 
onto such a polygon by an automorphism of~$\nz$ (Figure~\ref{fig:types-pf}).

If both lines $x_1 = 0$ and $x_1 = n$ split $\m'$, we likewise consider the 
possibilities according to which of the segments $I_{1,2,3}$ and $J_{1,2,3}$ 
split $\m'$ (Lemma~\ref{lem:imposs} rules out two of them) and draw the same 
conclusion (Figure~\ref{fig:types-pf}).
\end{proof}

Now we recast the Main Theorem for lattices with the largest invariant factor 
greater than~2 as follows:

\begin{subtheorem-c}
Let $\m$ be a convex integer $N$-gon of one of the types~I$_n$--VI$_n$, where 
$n$ is an integer, $n \ge 3$.  Then:
\begin{enumerate}[(i)]
\item
the following inequality holds:
$$
N\le2n+2
;
$$
\item
if the vertices of $\m$ belong to a $(1, n/2)$-lattice, then
$$N\le2n;$$
\item
if the vertices of $\m$ belong to a $(1, n)$-lattice, then
$$N\le2n-2.$$
\end{enumerate}
\end{subtheorem-c}

Let us make sure that having proved Sub-Theorem~C, we in fact establish the 
Main Theorem for $(\delta, n)$-lattices~$\Lambda$ with $n \ge 3$.  Indeed, let 
$\Lambda$ be such a lattice and~$\m$ be an integer $N$-gon with $N \ge 
\nu(\Lambda)$.  Suppose that contrary to our expectations, $\m$ is free of 
points of $\Lambda$.  Let $A$ be a unimodular transformation mapping $\Lambda$ 
onto $\delta \z \times n \z$ and $S$ be the scaling $\diag(n/\delta, 1)$.  The 
superposition $SA$ maps $\Lambda$ onto $\nz$ and $\m$, onto an integer 
polygon~$\m'$ free of points of~$\nz$.  Let $\varphi$ be an affine 
automorphism of~$\nz$ mapping $\m'$ onto a polygon $\m''$ of one of the types 
I$_n$--VI$_n$.  Note that the vertices of~$\m'$ belong to $S\zz = (n/\delta)\z 
\times \z$, so the vertices of~$\m''$ belong to a $(1, n/\delta)$-lattice.  
Now the assumption $N \ge \nu(\delta, n)$ contradicts Sub-Theorem~C applied 
to~$\m''$.

Thus, the Main Theorem is the sum of Sub-Theorems~A, B~and~C.

In the case of type~I polygons the proof of Sub-Theorem~C is a simple 
combinatorial argument, see Section~\ref{sec:AB}.  However, the rest types 
require a fine geometric analysis.  In Section~\ref{sec:slopes} we collect 
necessary tools and apply them to type~II polygons.  The rest types require 
more technical treatment carried out in~\cite{BKb}.

\section{Slopes}
\label{sec:slopes}

\subsection{Slopes}

\begin{figure}
\includegraphics{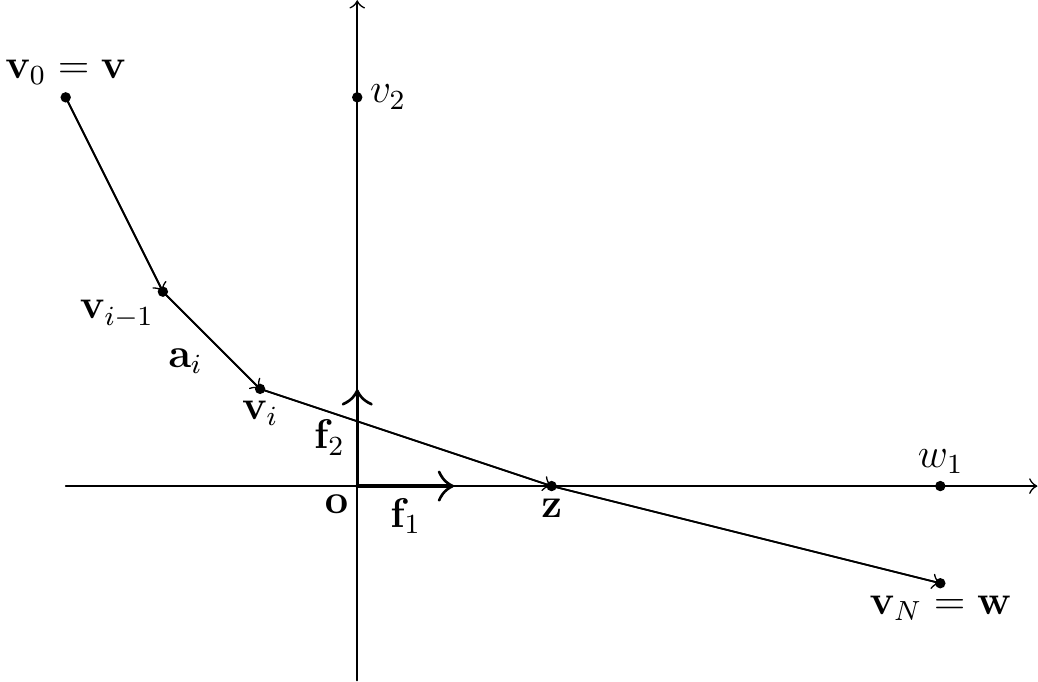}
\caption{The broken line is a slope with respect to the basis $(\mathbf f_1, 
\mathbf f_2)$.  It is convex and the vectors associated with its edges point 
down and to the right.  The frame $(\mathbf o; \mathbf f_1, \mathbf f_2)$ 
splits the slope and forms small angle with it, since there is a supporting 
line passing through the point~$\mathbf z$ and forming an angle $\le \pi/4$ 
with the axis.}
\label{fig:slope}
\end{figure}

Let $(\mathbf{f}_1,\mathbf{f}_2)$ be a basis of $\rr$, and let $\mathbf v_0, 
\mathbf v_1, \dots, \mathbf v_N$ ($N\ge0$) be a finite sequence of points on 
the plane.  If $N\ge1$, set
\begin{equation}\label{eq1-2-3}
\mathbf{v}_i+\mathbf{v}_{i-1} =
\mathbf{a}_i=a_{i1}\mathbf{f}_1+a_{i2}\mathbf{f}_2\qquad
(i=1,\ldots,N)
.
\end{equation}
If
\begin{equation}\label{eq3-3}
a_{i1}>0,\ a_{i2}<0\qquad (i=1,\ldots,N)
\end{equation}
and
\begin{equation}\label{eq4-3}
\begin{vmatrix}
a_{i1}&a_{i+1,1}\\
a_{i2}&a_{i+1,2}
\end{vmatrix}
>0
\qquad (i=1,\ldots,N-1)
,
\end{equation}
we say that the union $Q$ of the segments $[\mathbf v_{0}, \mathbf v_1]$, 
$[\mathbf v_{1}, \mathbf v_2]$, \dots, $[\mathbf v_{N-1}, \mathbf v_N]$ is a 
\emph{slope} with respect to the basis $(\mathbf f_1, \mathbf f_2)$.  These 
segments are called the \emph{edges} of the slope, and the points $\mathbf 
v_0$, $\mathbf v_1$, \dots, $\mathbf v_N$, its \emph{vertices}, $\mathbf v_0$ 
and $\mathbf v_N$ being the \emph{endpoints}.  If $N = 1$, we call the segment 
$[\mathbf v_0, \mathbf v_1]$ a slope if \eqref{eq3-3} holds, and if $N = 0$, 
we still call the one-point set $\{\mathbf v_0\}$ a slope.  If all the 
vertices of $Q$ belong to a lattice $\Gamma$, we call it a 
$\Gamma$-\emph{slope}.  A $\zz$-slope is called \emph{integer}, and it is the 
only kind of slopes we are interested in.

It is not hard to prove that the vertices and edges of a slope are uniquely 
defined, and that the basis induces a unique ordering of vertices.

Figure~\ref{fig:slope} illustrates the concepts of a slope and of an affine 
frame splitting a slope, to be considered below.

\begin{remark}
\label{rem3-4}
If $Q$ is a slope with respect to a basis $(\mathbf f_1, \mathbf f_2)$, then 
it is a slope with respect to the basis $(\mathbf f_2, \mathbf f_1)$, too.
\end{remark}

Although the following statement is simple, it provides handy tools for 
estimating the number of edges of a slope.  We are interesting in comparing 
the doubled number of edges with the `width' of the slope, i.~e.\ its 
projection on the axis spanned by $\mathbf f_1$.  The general point is that 
the edges with projection~1 contribute quadratic growth to the `height' of the 
slope.

\begin{proposition}
\label{pr:slp}
Let $(\mathbf f_1, \mathbf f_2)$ be a basis of~$\zz$ and~$\mathbf v$ 
and~$\mathbf w$ be the endpoints of an integer slope (with respect to 
$(\mathbf f_1, \mathbf f_2)$) having~$N$ edges.  Let
\begin{equation*}
\mathbf w - \mathbf v = b_1 \mathbf f_1 + b_2 \mathbf f_2
.
\end{equation*}
Then there exists an integer $s$ such that
\begin{gather}
2N\le |b_1| + s,
\label{eq:slp1}
\\
|b_2| \ge\frac{s(s+1)}{2},
\label{eq:slp2}
\\
0\le s\le N.
\label{eq:slp3}
\end{gather}
If the vertices of the slope belong to a lattice with small $\mathbf f_1$-step 
greater then~1, one can take $s = 0$, so that
\begin{equation}
\label{eq:slp4}
2N \le |b_1|
.
\end{equation}
If the vertices of the slope belong to a lattice having the basis 
$(\mathbf{f}_1-a\mathbf{f}_2,m\mathbf{f}_2)$, where $1\le
a\le m$, then~\eqref{eq:slp2} can be replaced by
\begin{equation}
\label{eq:slp5}
|b_2| \ge\frac{2a+(s-1)m}{2}s
.
\end{equation}
\end{proposition}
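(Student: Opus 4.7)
The plan is to organize the edges of the slope according to whether their $\mathbf f_1$-component equals~$1$ or is at least~$2$, and read off all four inequalities from this split. Write $b_1 \mathbf f_1 + b_2 \mathbf f_2 = \sum_{i=1}^N \mathbf a_i$, so that $b_1 = \sum a_{i1}$ and $b_2 = \sum a_{i2}$; by \eqref{eq3-3} we have $b_1>0$ and $b_2<0$. Let $s$ be the number of indices $i$ with $a_{i1}=1$; then $0\le s\le N$, proving \eqref{eq:slp3}. Each remaining edge has $a_{i1}\ge 2$, whence
\[
b_1 \;=\; \sum_{a_{i1}=1} 1 \;+\; \sum_{a_{i1}\ge 2} a_{i1} \;\ge\; s + 2(N-s) \;=\; 2N-s,
\]
which is \eqref{eq:slp1}.

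Next I would extract the monotonicity of slopes hidden in \eqref{eq4-3}. Since $a_{i1},a_{i+1,1}>0$, dividing the determinantal inequality by $a_{i1}a_{i+1,1}$ yields that $a_{i2}/a_{i1}$ is strictly increasing in $i$. Restricting to the $s$ edges with $a_{i1}=1$, this forces the corresponding $a_{i2}$ to be pairwise distinct negative integers; hence their sum is at most $-(1+2+\dots+s)=-s(s+1)/2$. Since edges with $a_{i1}\ge 2$ also contribute negatively to $b_2$, we obtain $|b_2|\ge s(s+1)/2$, which is \eqref{eq:slp2}.

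For \eqref{eq:slp4}, I would note that if the small $\mathbf f_1$-step of the ambient lattice is larger than~$1$, then no lattice vector can have $\mathbf f_1$-component equal to~$\pm 1$; since each edge vector $\mathbf a_i$ lies in that lattice, we must have $a_{i1}\ge 2$ for every $i$, so $s=0$ is admissible and \eqref{eq:slp1} collapses to $2N\le|b_1|$. For \eqref{eq:slp5}, I would expand a lattice vector in the basis $(\mathbf f_1 - a\mathbf f_2,\, m\mathbf f_2)$: writing $\mathbf a_i = p(\mathbf f_1 - a\mathbf f_2)+q(m\mathbf f_2)$ gives $a_{i1}=p$ and $a_{i2}=-pa+qm$. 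When $a_{i1}=1$ and $a_{i2}<0$, the hypothesis $1\le a\le m$ forces $q\le 0$, so $a_{i2}\in\{-a,-a-m,-a-2m,\dots\}$. As above, the strict monotonicity makes the $s$ values pairwise distinct in this arithmetic progression, so their sum is bounded above by $-\sum_{k=0}^{s-1}(a+km)=-s(2a+(s-1)m)/2$, yielding \eqref{eq:slp5}.

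The argument is largely bookkeeping, so there is no single hard step; the main point of care is making sure the monotonicity coming from \eqref{eq4-3} is really what allows the ``distinct values'' step when $a_{i1}=1$, and that the lattice hypotheses in the last two assertions are translated correctly into an integrality condition on the pair $(a_{i1},a_{i2})$ rather than on the point $\mathbf v_i$ itself. Once that translation is in place, \eqref{eq:slp4} and \eqref{eq:slp5} follow from the same two-bucket counting as \eqref{eq:slp1} and \eqref{eq:slp2}.
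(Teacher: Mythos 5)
Your proposal is correct and follows essentially the same route as the paper: define $s$ as the number of edges with $a_{i1}=1$, bound $|b_1|$ by two-bucket counting, and bound $|b_2|$ by noting that the $\mathbf f_2$-components of those $s$ edges are distinct terms of the arithmetic progression $-a, -a-m, \dots$ (the paper merely obtains \eqref{eq:slp2} as the special case $a=m=1$ of \eqref{eq:slp5} rather than proving it separately). Your explicit use of the monotonicity from \eqref{eq4-3} to justify distinctness, and the translation of the lattice hypotheses to the edge vectors $\mathbf a_i$, are exactly the points the paper leaves implicit.
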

\begin{proof}
Let $\mathbf v_0 = v$, $\mathbf v_1$, \dots, $\mathbf v_N = \mathbf w$ be the 
vertices of the slope and assume that~\eqref{eq1-2-3}--\eqref{eq4-3} hold.  It 
follows from~\eqref{eq3-3} and~\eqref{eq4-3} that $\mathbf a_i \ne \mathbf 
a_j$ for $i \ne j$.  Set $A = \{\mathbf a_i \colon a_{i1} = 1\}$ and $s = 
|A|$.  Observe that~$s$ satisfies~\eqref{eq:slp3} and $s = 0$ if the vertices 
of the slope belong to a lattice with small $\mathbf f_1$-step greater then~1.

Let us prove~\eqref{eq:slp1}.  If $\mathbf a_i \notin A$, we have $a_{i1} \ge 
2$, so
\begin{equation*}
|b_1|
= \sum_{i = 1}^N a_{i1}
= \sum_{\mathbf a \in A} a_{i1}
+ \sum_{\mathbf a \notin A} a_{i1}
\ge
|S| + 2(N - |S|) = 2N - s
,
\end{equation*}
and~\eqref{eq:slp1} follows.

Let us prove~\eqref{eq:slp5} assuming that the slope satisfies correspondent 
hypothesis.  It is easily seen that the vectors belonging to~$A$ are of the 
form $\mathbf f_1 - (a + um) \mathbf f_2$, where $u \in \z$, $u \ge 0$.  Thus,
\begin{multline*}
|b_2|
= \sum_{i = 1}^N (- a_{i1})
\ge \sum_{\mathbf a_i \in A} (- a_{i1})
\\
\ge a + (a + m) + \dots + (a + (s-1)m)
=\frac{2a+(s-1)m}{2}s
,
\end{multline*}
as claimed.

In the case of a generic integer slope, letting $m = a = 1$, we 
recover~\eqref{eq:slp2} from~\eqref{eq:slp5}.
\end{proof}

\subsection{Splitting frames}

Let $(\mathbf o;\mathbf{f}_1,\mathbf{f}_2)$ be an integer frame and $Q$ be a 
slope with respect to $(\mathbf f_1, \mathbf f_2)$.

\begin{definition}
We say that the frame $(\mathbf o;\mathbf{f}_1,\mathbf{f}_2)$ \emph{splits} 
the slope $Q$ if
\begin{enumerate}
\item
one endpoint $\mathbf v = \mathbf o + v_1 \mathbf f_1 + v_2 \mathbf f_2$ of 
$Q$ satisfies
\begin{equation}
\label{eq:sf1}
v_1 < 0, \ v_2 > 0,
\end{equation}
while the other endpoint $\mathbf w = \mathbf o + w_1 \mathbf f_1 + w_2 
\mathbf f_2$ satisfies
\begin{equation}
\label{eq:sf2}
w_1 > 0, \ w_2 < 0;
\end{equation}
\item
there exists a point on $Q$ having both positive coordinates in the frame 
$(\mathbf o;\mathbf{f}_1,\mathbf{f}_2)$.
\end{enumerate}
\end{definition}
\begin{remark}
Obviously, a frame can only split a slope if the slope has at least one edge.
\end{remark}

\begin{remark}
\label{rem:slpq}
If an integer frame $(\mathbf o; \mathbf f_1, \mathbf f_2)$ splits a 
slope~$Q$, it is obvious that~$Q$ has no points in the quadrant~$\{\mathbf o + 
\lambda_1 \mathbf f_1 + \lambda_2 \mathbf f_2 \colon \lambda_1, \lambda_2 \le 
0\}$.
\end{remark}

Suppose that a frame $(\mathbf o; \mathbf f_1, \mathbf f_2)$ splits a 
slope~$Q$ and let~$\mathbf z$ be the point where~$Q$ meets the ray 
$\{\mathbf{o}+\lambda \mathbf{f}_1 \colon \lambda\ge 0\}$.  If there is a 
supporting line for~$Q$ passing through~$\mathbf z$ that forms an angle $\le 
\pi/4$ with the ray, we say that the frame $(\mathbf o; \mathbf f_1, \mathbf 
f_2)$ \emph{forms small angle} with the slope~$Q$.

\begin{proposition}
\label{pr:sa}
Suppose that an integer frame $(\mathbf o; \mathbf f_1, \mathbf f_2)$ splits a 
slope~$Q$; then the frame $(\mathbf o; \mathbf f_2, \mathbf f_1)$ splits it as 
well, and at least one of the frames forms small angle with~$Q$.  If there 
exists a point $\mathbf{y} = \mathbf o + y_1 \mathbf f_1 + y_2 \mathbf f_2 \in 
Q$ such that $y_2 > 0$ and $y_1 + y_2 \le 0$, then $(\mathbf o; \mathbf f_1, 
\mathbf f_2)$ forms small angle with $Q$.
\end{proposition}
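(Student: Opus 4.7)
My plan is to prove the three claims in turn, relying throughout on a single observation: slope condition~\eqref{eq4-3}, applied to edges that lie in the fourth quadrant relative to $(\mathbf f_1,\mathbf f_2)$, forces the numerical slopes $a_{i2}/a_{i1}$ to be strictly increasing in $i$, so $Q$ is the graph of a convex, strictly decreasing function of $x_1$. For the first claim, Remark~\ref{rem3-4} already says $Q$ is a slope also with respect to $(\mathbf f_2,\mathbf f_1)$; under the swap the endpoints acquire coordinates $(v_2,v_1)$ and $(w_2,w_1)$, so the roles in~\eqref{eq:sf1} and~\eqref{eq:sf2} are exchanged, while the existence of a point of $Q$ with both coordinates positive is manifestly preserved. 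Hence $(\mathbf o;\mathbf f_2,\mathbf f_1)$ splits $Q$.

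For the second claim I would let $\mathbf z$ and $\mathbf z'$ denote the intersections of $Q$ with the positive $\mathbf f_1$- and $\mathbf f_2$-rays. Monotonicity of both coordinates along $Q$, together with the splitting hypothesis (which supplies a point of $Q$ in the open first quadrant), guarantees that $\mathbf z$ and $\mathbf z'$ exist, are distinct from the origin, and that $\mathbf z'$ strictly precedes $\mathbf z$ in the traversal. Let $\sigma^+$ be the largest slope (in the $(\mathbf f_1,\mathbf f_2)$-coordinates, treated as orthonormal for angle purposes) among supporting lines at $\mathbf z'$ and $\sigma^-$ the smallest at $\mathbf z$; monotonicity of edge slopes gives $\sigma^+\le\sigma^-$. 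Now $(\mathbf o;\mathbf f_2,\mathbf f_1)$ forms small angle with $Q$ precisely when $\sigma^+\le -1$, and $(\mathbf o;\mathbf f_1,\mathbf f_2)$ does precisely when $\sigma^-\ge -1$; if both failed we would get $\sigma^+>-1>\sigma^-$, contradicting the monotonicity.

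For the third claim, $y_2>0$ and $y_1+y_2\le 0$ force $y_1<0$, so $\mathbf y$ precedes $\mathbf z=(z,0)$ on $Q$. The chord from $\mathbf y$ to $\mathbf z$ has slope $-y_2/(z-y_1)$; the inequalities $z>0$ and $-y_1\ge y_2>0$ yield $z-y_1\ge y_2$, whence chord slope $\ge -1$. Convexity of $Q$ in the form ``at the right endpoint of a chord every supporting line has slope at least the chord slope'' then produces a supporting line at $\mathbf z$ with slope $\ge -1$, so $(\mathbf o;\mathbf f_1,\mathbf f_2)$ forms small angle with $Q$.

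The main technical obstacle I anticipate is vertex bookkeeping: when $\mathbf z$ or $\mathbf z'$ coincides with a vertex of $Q$, supporting lines form a whole one-parameter family with slopes in an interval $[a_{k2}/a_{k1},a_{k+1,2}/a_{k+1,1}]$, which is exactly why part~2 must be phrased with extremal slopes $\sigma^\pm$ rather than a single tangent slope. A smaller verification is that an angle $\le\pi/4$ with the $\mathbf f_1$-ray is equivalent to $|a_{i2}/a_{i1}|\le 1$, i.e.\ slope $\ge -1$ (since edges have negative numerical slope), so that angle comparisons reduce cleanly to the numerical comparisons against $-1$ used above.
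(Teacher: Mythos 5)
The paper gives no proof of this proposition (``The proof is left to the reader''), so there is nothing to compare against line by line; your argument is correct, and it is in effect the argument the authors presuppose later in Remarks~\ref{rem:ssa0} and~\ref{rem:ssa}, where the small-angle condition is encoded as $\alpha\ge1$, i.e.\ as exactly the comparison of the frame-coordinate slope of the crossing edge with $-1$ that drives your second part. One imprecision worth fixing: your two ``precisely when'' claims are not equivalences. The swapped frame forms small angle iff \emph{some} supporting line at $\mathbf z'$ has slope $\le-1$, i.e.\ iff the \emph{smallest} such slope is $\le-1$, so $\sigma^+\le-1$ is sufficient but not necessary; dually, $(\mathbf o;\mathbf f_1,\mathbf f_2)$ forms small angle iff the \emph{largest} supporting slope at $\mathbf z$ is $\ge-1$, so $\sigma^-\ge-1$ is sufficient but not necessary. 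Fortunately the directions you actually invoke are the contrapositives of the true implications: if the swapped frame fails, every supporting slope at $\mathbf z'$, in particular $\sigma^+$, exceeds $-1$, and if the original frame fails, every supporting slope at $\mathbf z$, in particular $\sigma^-$, is below $-1$; this yields $\sigma^+>-1>\sigma^-$ and contradicts $\sigma^+\le\sigma^-$, so the proof stands. Your treatment of the third part (the chord from $\mathbf y$ to $\mathbf z$ has slope $>-1$ because $z-y_1>-y_1\ge y_2$, and convexity pushes every supporting slope at $\mathbf z$ above the chord slope) and your explicit convention that angles are measured with $(\mathbf f_1,\mathbf f_2)$ treated as orthonormal---the only reading under which Remark~\ref{rem:ssa0} is true---are both sound.
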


The proof is left to the reader.

The following theorems provide much more sophisticated estimates of the number 
of edges of a slope than those of Preposition~\ref{pr:slp}.  This time we are 
comparing the doubled number of edges with the length of the projection of the 
slope on the positive half-axes of the frame, where by the projection on a 
half-axis we mean the intersection of the projection on the axis with the 
half-axis.  It turns out that the doubled number of edges is always less then 
or equal to the total length of the projection.

\begin{theorem}\label{th3-6}
Suppose that an integer frame $(\mathbf o; \mathbf f_1, \mathbf f_2)$ splits 
an integer slope $Q$ having $N$ edges and the endpoints $\mathbf v = \mathbf o 
+ v_1 \mathbf f_1 + v_2 \mathbf f_2$ and $\mathbf w = \mathbf o + w_1 \mathbf 
f_1 + w_2 \mathbf f_2$ satisfying~\eqref{eq:sf1} and~\eqref{eq:sf2}.  Then 
there exist $s \in \z$ and $t \in \z$ such that
\begin{gather}
0\le s\le t,
\label{eq:3-6A}
\\
v_2 - s \ge 0,
\label{eq:3-6B}
\\
- v_1 < ts-\frac{s^2-s}{2}+(v_2 - s)(t+1),
\label{eq:3-6C}
\\
2N\le v_2 + w_1 - t + s.
\label{eq:3-6D}
\end{gather}
Moreover, if $(\mathbf o; \mathbf f_1, \mathbf f_2)$ forms small angle with 
$Q$, we have
\begin{equation}
\label{eq:3-6E}
2N\le
v_2+w_1-t+s-\Ceil{\frac{-w_{2}}{2}} + 1
.
\end{equation}
\end{theorem}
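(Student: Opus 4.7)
The plan is to split $Q$ at a suitable intermediate vertex, apply Proposition~\ref{pr:slp} to each of the two resulting sub-slopes --- in its standard form on one, and via Remark~\ref{rem3-4} in its swapped-basis form on the other --- and then combine the estimates.

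Let the vertices of $Q$ be $\mathbf v_0=\mathbf v,\mathbf v_1,\dots,\mathbf v_N=\mathbf w$, with coordinates $(v_{i,1},v_{i,2})$ in the frame, and let $k$ be the largest index with $v_{k,2}\ge 0$, so that $\mathbf v_k$ is the last vertex of $Q$ in the closed upper half-plane. Denote by $Q_1$ the sub-slope on the first $k$ edges and by $Q_2$ that on the remaining $N-k$ edges. Applying Proposition~\ref{pr:slp} to $Q_1$ relative to the swapped basis $(\mathbf f_2,\mathbf f_1)$ produces an integer $s$, $0\le s\le k$, with $2k\le(v_2-v_{k,2})+s$ and $v_{k,1}-v_1\ge s(s+1)/2$; this $s$ will serve as the one asserted in the theorem. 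Applying Proposition~\ref{pr:slp} to $Q_2$ in the original basis produces an integer $\sigma$, $0\le\sigma\le N-k$, with $2(N-k)\le(w_1-v_{k,1})+\sigma$ and $v_{k,2}-w_2\ge\sigma(\sigma+1)/2$. Adding these yields
$$2N\le v_2+w_1-v_{k,1}-v_{k,2}+s+\sigma,$$
which matches the form of~(D) under the choice $t=v_{k,1}+v_{k,2}-\sigma$.

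The verification of~(B), $v_2\ge s$, is straightforward: since each edge of $Q_1$ has $|a_{i,2}|\ge 1$, we have $s\le k\le\sum_{i\le k}|a_{i,2}|=v_2-v_{k,2}\le v_2$. Conditions~(A), $s\le t$, and~(C), which rewrites as $v_2(t+1)>-v_1+s(s+1)/2$, are more delicate. They are ultimately consequences of the lower bounds $v_{k,1}\ge v_1+s(s+1)/2$ and $v_{k,2}\ge w_2+\sigma(\sigma+1)/2$, together with the constraint, imposed by the frame's splitting of $Q$, that the slope meets the open first quadrant --- this provides a lower bound on $v_{k,1}+v_{k,2}$ in terms of $\sigma$ and $|w_2|$. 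In certain configurations (notably when $\sigma>v_{k,2}$, i.e.\ when the first edge of $Q_2$ has a large vertical drop), a small adjustment of $t$ by an integer may be needed; a case analysis based on the sign of $v_{k,1}$ and the relative size of $\sigma$ and $v_{k,2}$ accommodates all cases.

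For the refinement~(E), the small-angle hypothesis forces every edge of $Q$ beyond $\mathbf z$ to satisfy $a_{i,1}\ge|a_{i,2}|$ by convexity; combined with the distinctness of edge slopes, this improves the lower bound on $w_1-v_{k,1}$ by a quadratic amount involving $|w_2|$, which after substitution produces the extra $-\Ceil{-w_2/2}+1$ term in~(E). The main obstacle is the precise verification of~(C) and the small-angle refinement, both of which hinge on tracking the equality cases in Proposition~\ref{pr:slp} and on subtle convexity-based arguments for the edges of $Q_2$ lying strictly below the $f_1$-axis.
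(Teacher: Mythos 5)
There is a genuine gap, and it sits exactly where you flag the argument as ``delicate'': the $t$ you construct does not satisfy \eqref{eq:3-6A} and \eqref{eq:3-6C}, and no adjustment within your framework can repair it, because your decomposition never records the one piece of data that $t$ must encode, namely the slope of the edge of $Q$ that crosses the positive $\mathbf f_1$-axis. Concretely, take $\mathbf o=\mathbf 0$, $(\mathbf f_1,\mathbf f_2)=(\mathbf e_1,\mathbf e_2)$ and the slope with vertices $(-11,2)$, $(-10,1)$, $(12,-1)$: it satisfies \eqref{eq3-3}, \eqref{eq4-3}, the frame splits it (the second edge passes through $(0.12,0.08)$), your $Q_1$ is the first edge and Proposition~\ref{pr:slp} in the swapped basis forces $s=1$, your $Q_2$ is the second edge with $\sigma=0$, so $t=v_{k,1}+v_{k,2}-\sigma=-9<s$, violating \eqref{eq:3-6A}; since $\sigma\ge0$ always, no choice of $\sigma$ can raise $t$ above $v_{k,1}+v_{k,2}=-9$. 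Even when your $t$ stays nonnegative it can fail \eqref{eq:3-6C}: for the split slope with vertices $(-5,3)$, $(-1,1)$, $(10,-1)$ you get $s=0$, $\sigma=0$, $t=0$, and \eqref{eq:3-6C} would read $5<3$. In both examples a valid $t$ exists ($t=10$, resp.\ $t=5$), but it is dictated by the crossing edge, whose large horizontal projection is what pays for the $-t$ in \eqref{eq:3-6D}; Proposition~\ref{pr:slp} applied to $Q_2$ only distinguishes edges with $a_{i1}=1$ from those with $a_{i1}\ge2$ and is blind to this.

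The paper instead sets $k=\min\{i\colon v_{i2}<0\}$, $\alpha=a_{k1}/(-a_{k2})$ for the crossing edge $\varepsilon_k$, and $t=\lceil\alpha\rceil-1$, while $s$ counts the edges before $\varepsilon_k$ with $a_{i2}=-1$ (this agrees with your $s$). Then \eqref{eq:3-6A} and \eqref{eq:3-6C} hold because every edge in the closed upper half-plane has ratio $a_{i1}/(-a_{i2})<\alpha\le t+1$ and the $s$ special edges have distinct horizontal lengths lying in $\{1,\dots,t\}$ (Lemma~\ref{lem:pfs-st}); and \eqref{eq:3-6D} holds because the single edge $\varepsilon_k$ contributes horizontal projection at least $t+1+\lfloor(-v_{k2}-1)\alpha\rfloor$ (Lemma~\ref{lem4-5}), which offsets the $-t$. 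Your sketch of \eqref{eq:3-6E} is also off target: the gain from the edges below the axis is linear, not quadratic, in $|w_2|$ --- under the small-angle hypothesis each such edge satisfies $a_{i1}+a_{i2}>0$, hence $a_{i1}-2\ge(-a_{i2}-\delta_i)/2$ with at most one exceptional edge $2\mathbf f_1-\mathbf f_2$ (Lemma~\ref{lem:pfs-1}). The missing idea, in short, is the definition of $t$ through $\alpha$ together with the lower bound on the horizontal projection of the crossing edge; without it, \eqref{eq:3-6A}, \eqref{eq:3-6C} and \eqref{eq:3-6D} cannot be satisfied simultaneously by your method.
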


\begin{corollary}
\label{cor3-6}
Under the hypotheses of Theorem~\ref{th3-6},
\begin{equation*}
2N\le v_2+w_1
,
\end{equation*}
and if $(\mathbf o; \mathbf f_1, \mathbf f_2)$ forms small angle with $Q$, 
then
\begin{equation*}
2N\le
v_2+w_1-\Ceil{\frac{-w_{2}}{2}} + 1
.
\end{equation*}
\end{corollary}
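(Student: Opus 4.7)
The corollary is an immediate consequence of Theorem~\ref{th3-6}. The plan is to simply use the inequality $s \le t$ from~\eqref{eq:3-6A} to drop the term $-t + s$ (which is nonpositive) from the right-hand sides of~\eqref{eq:3-6D} and~\eqref{eq:3-6E}. No further estimates involving $v_1$, the geometry of the slope, or the parameters $s,t$ themselves are needed; once Theorem~\ref{th3-6} is in hand, the corollary contains no real content beyond this monotonicity observation.

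More concretely, given the hypotheses of Theorem~\ref{th3-6}, there exist integers $s$ and $t$ satisfying~\eqref{eq:3-6A}--\eqref{eq:3-6D}. From $0 \le s \le t$ we get $-t + s \le 0$, hence
\begin{equation*}
2N \le v_2 + w_1 - t + s \le v_2 + w_1,
\end{equation*}
which is the first assertion. If in addition $(\mathbf o; \mathbf f_1, \mathbf f_2)$ forms a small angle with $Q$, then~\eqref{eq:3-6E} applies and the same inequality $-t + s \le 0$ yields
\begin{equation*}
2N \le v_2 + w_1 - t + s - \Ceil{\tfrac{-w_2}{2}} + 1 \le v_2 + w_1 - \Ceil{\tfrac{-w_2}{2}} + 1,
\end{equation*}
which is the second assertion.

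Since the whole argument is one line of inequality manipulation, there is no real obstacle here; the work is entirely absorbed into the (significantly more delicate) proof of Theorem~\ref{th3-6}. The only thing worth flagging is that one should not try to improve the corollary by keeping $s$ and $t$: their specific values are constrained by~\eqref{eq:3-6B} and~\eqref{eq:3-6C} in a way that couples them with $v_1$ and $v_2$, and the corollary is designed precisely to give a clean bound that no longer references these auxiliary parameters.
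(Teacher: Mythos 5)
Your proof is correct and is exactly the intended argument: the paper states the corollary without proof precisely because it follows from Theorem~\ref{th3-6} by noting that \eqref{eq:3-6A} gives $s - t \le 0$ and dropping that term from \eqref{eq:3-6D} and \eqref{eq:3-6E}. Nothing further is needed.
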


\begin{theorem}
\label{th3-8}
Under the hypotheses of Theorem~\ref{th3-6}, if the vertices of~$Q$ belong to 
a proper sublattice of $\zz$, then
\begin{equation*}
2N \le v_2 + w_1 - 1
.
\end{equation*}
\end{theorem}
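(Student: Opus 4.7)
My plan is to strengthen Corollary~\ref{cor3-6} by one unit under the sublattice hypothesis by re-examining the construction of the pair $(s,t)$ inside Theorem~\ref{th3-6} so that $t \ge s+1$ holds; then~\eqref{eq:3-6D} immediately yields $2N\le v_2+w_1-1$. The extra unit must come from the sublattice: since $\det\Lambda\ge 2$, Proposition~\ref{pr:steps} implies that either (a) the small $\mathbf{f}_i$-step of $\Lambda$ exceeds~$1$ for some $i\in\{1,2\}$, or (b) both small steps equal $1$ and $\Lambda$ admits a basis of the form $(\mathbf{f}_1-a\mathbf{f}_2,\,m\mathbf{f}_2)$ with $m\ge 2$, $1\le a\le m$ (up to the $\mathbf{f}_1\leftrightarrow\mathbf{f}_2$ swap permitted by Remark~\ref{rem3-4}).

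In case (a) the argument leading to~\eqref{eq:slp4} forbids edges with $a_{j1}=1$ (respectively $|a_{j2}|=1$), so the parameter $s$ in the proof of Theorem~\ref{th3-6} is automatically~$0$; combined with $-v_1\ge 1$, the constraint~\eqref{eq:3-6C} then forces $t\ge 1$. In case (b) I would rerun the proof of Theorem~\ref{th3-6} with the sharper estimate~\eqref{eq:slp5} replacing~\eqref{eq:slp2}; whenever that substitution yields a strictly larger lower bound on $|b_2|$ (which happens systematically as soon as $a\ge 2$ or $s\ge 2$), the tightening propagates to~\eqref{eq:3-6C} and forces $t$ to be at least $s+1$ in order for the resulting inequality to remain compatible with the actual value of $-v_1$.

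The main obstacle lies in the borderline subcases where the sharper estimate~\eqref{eq:slp5} coincides with~\eqref{eq:slp2} (for example $s\le 1$ with $a=1$), and in the subcase of (a) where $-v_1$ is too small for~\eqref{eq:3-6C} with $s=0$ to force $t\ge 1$ on its own. I would handle these by invoking Proposition~\ref{pr:sa}, passing to whichever of the frames $(\mathbf{f}_1,\mathbf{f}_2)$, $(\mathbf{f}_2,\mathbf{f}_1)$ forms small angle with $Q$, and applying the sharper bound~\eqref{eq:3-6E}: the ceiling term $-\lceil -w_2/2\rceil+1$ contributes the missing unit whenever $-w_2$ (or its swapped-frame analogue $-v_1$) is at least~$2$, and a short direct inspection of slopes with $|v_1|=|w_2|=1$, which necessarily have very few edges, disposes of the remaining residue.
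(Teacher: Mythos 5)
Your overall strategy---win one extra unit from the sublattice by arranging $t \ge s + 1$ and then reading it off from~\eqref{eq:3-6D}---cannot be carried through, because the inequality $t \ge s+1$ is simply false in general for a proper sublattice: Lemma~\ref{lem:pfs-st} (which governs the pair $(s,t)$ actually constructed in the proof of Theorem~\ref{th3-6}) explicitly allows $s = t = 0$ and $s = t = 1$, and both configurations occur. Two specific steps also fail. In your case~(a), with $s=0$ inequality~\eqref{eq:3-6C} reads $-v_1 < v_2(t+1)$, which is perfectly consistent with $t=0$ whenever $-v_1 < v_2$; so nothing forces $t \ge 1$ there. And in your borderline fallback, the ceiling term in~\eqref{eq:3-6E} contributes the missing unit only when $-w_2 \ge 3$ (you need $\lceil -w_2/2\rceil \ge 2$), not $-w_2 \ge 2$; worse, the residual slopes with $-v_1, -w_2 \in \{1,2\}$ are \emph{not} slopes ``with very few edges''---the quantities $v_2$ and $w_1$ are unbounded there, so this is not a finite inspection, and the bound $2N \le v_2 + w_1 - 1$ for such slopes is precisely the hard content of the theorem. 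The appeal to~\eqref{eq:slp5} in case~(b) is likewise not a proof: that estimate constrains $|b_2|$, but it does not by itself alter the value $t = \lceil \alpha\rceil - 1$ attached to the edge crossing the axis.

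The idea that is missing is the one the paper uses in Lemma~\ref{lem4-7'}: argue by contradiction from $\hat\pi(E) = v_2 + w_1 - 2N \le 0$. Then every nonnegative summand in the lower bounds for $\hat\pi(E_1)$ and $\hat\pi(E_2)$ must vanish, which pins the slope down to $s = t = 1$, $v_{k2} = -1$, $v_{k-1,2} \in \{0,1\}$, with the single edge of $S$ carrying the vector $\mathbf f_1 - \mathbf f_2$ and the crossing edge carrying $2\mathbf f_1 - \mathbf f_2$ or $3\mathbf f_1 - 2\mathbf f_2$. Each of these pairs has determinant $\pm 1$ and hence forms a basis of $\zz$ (Proposition~\ref{pr:th1-1}), so the two vectors cannot both lie in a proper sublattice. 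That unimodularity contradiction is what actually supplies the final unit in the degenerate cases $s = t$, and nothing in your proposal substitutes for it.
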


The proofs of Theorems~\ref{th3-6} and~\ref{th3-8} are rather technical.  We 
give them in Section~\ref{sec:pfs}.

\subsection{The boundary of a convex polygon}
\label{sec:slopes:boundary}

\begin{figure}
\includegraphics{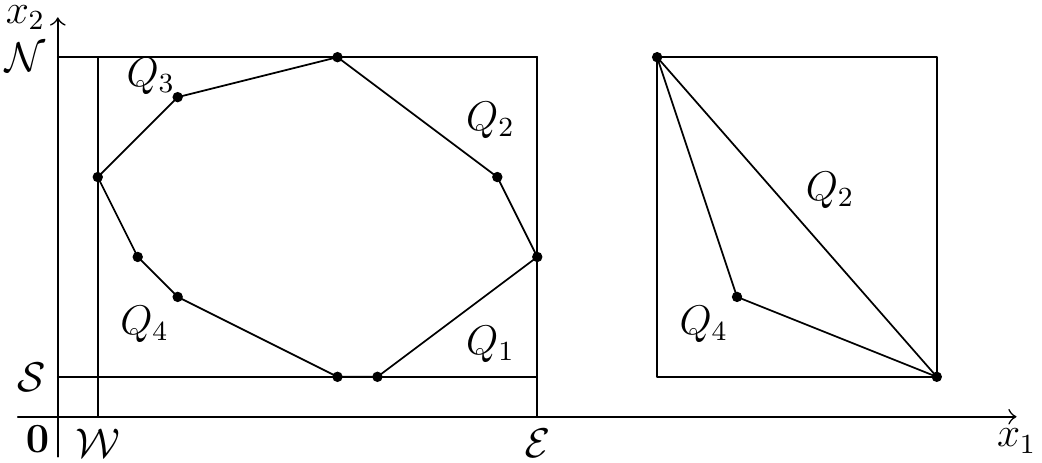}
\caption{The edges of a polygon not belonging to the bounding box form four 
maximal slopes $Q_k$.  These slopes may degenerate into a point, as is the 
case for the triangle on the right, which has only two nontrivial maximal 
slopes.}
\label{fig:maxslopes}
\end{figure}

Let $\m$ be a convex integer polygon.  Define
\begin{gather*}
\begin{array}{l}
\v=\max\{x_2:\;(x_1,x_2)\in\m\}, \\
\vl=\min\{x_1:\;(x_1,\v)\in\m\}, \\
\vp=\max\{x_1:\;(x_1,\v)\in\m\},
\end{array}
\begin{array}{l}
\n=\min\{x_2:\;(x_1,x_2)\in\m\}, \\
\nl=\min\{x_1:\;(x_1,\n)\in\m\}, \\
\np=\max\{x_2:\;(x_2,\n)\in\m\}, \\
\end{array}
\\
\begin{array}{l}
\l=\min\{x_1:\;(x_1,x_2)\in\m\}, \\
\ln=\min\{x_2:\;(\l,x_2)\in\m\}, \\
\lv=\max\{x_2:\;(\l,x_2)\in\m\}, \\
\end{array}
\begin{array}{l}
\p=\max\{x_1:\;(x_1,x_2)\in\m\}, \\
\pn=\min\{x_2:\;(\p,x_2)\in\m\}, \\
\pv=\max\{x_2:\;(\p,x_2)\in\m\}.
\end{array}
\end{gather*}
All these are integers.  Note that $(\nl,\n)$, $(\np,\n)$,
$(\vl,\v)$, $(\vp,\v)$, $(\l,\ln)$, $(\l,\lv)$, $(\p,\pn)$,
and $(\p,\pv)$ are (not necessarily distinct) vertices of $\m$.

There are four slopes naturally associated with a given polygon~$\m$.

Let us enumerate the vertices of~$\m$ starting from $\mathbf v_0 = (\l, \ln)$ 
and going counter-clockwise until we reach $v_{N_4} = (\nl, \n)$.  Clearly, 
the sequence $\mathbf v_0$, \dots, $\mathbf v_{N_4}$ gives rise to a slope 
with respect to the basis $(\mathbf e_1, \mathbf e_2)$.  We denote it 
by~$Q_4$.  Obviously,~$Q_4$ is an inclusion-wise maximal slope with respect to 
$(\mathbf e_1, \mathbf e_2)$ contained in the boundary of~$\m$.  Likewise, we 
define the slope $Q_1$ with respect to $(\mathbf e_2, - \mathbf e_1)$ having 
the endpoints $(\np, \n)$ and $(\p, \pn)$, the slope $Q_2$ with respect to $(- 
\mathbf e_1, - \mathbf e_2)$ having the endpoints $(\p, \pv)$ and $(\vp, \v)$, 
and the slope $Q_3$ with respect to $(- \mathbf e_2, \mathbf e_1)$ having the 
endpoints $(\vl, \v)$ and $(\l, \lv)$.  We call those \emph{maximal slopes} of 
the polygon~$P$ and denote by~$N_k$ the number of edges of~$Q_k$.

\begin{remark}
For each of the mentioned bases, the boundary of the polygon contains 
single-point inclusion-wise maximal slopes apart from correspondent~$Q_k$.  
However, we single~$Q_k$ out by explicitly indicating its endpoints.  For a 
given polygon, some of the maximal slopes~$Q_k$ may have but one vertex.
\end{remark}

Define
\begin{gather*}
M_1 =
\begin{cases}
0, & \text{if } \nl = \np,
\\
1, & \text{otherwise;}
\end{cases}
\ M_2 =
\begin{cases}
0, & \text{if } \pn = \pv,
\\
1, & \text{otherwise;}
\end{cases}
\\
M_3 =
\begin{cases}
0, & \text{if } \vl = \vp,
\\
1, & \text{otherwise;}
\end{cases}
\ M_4 =
\begin{cases}
0, & \text{if } \ln = \lv,
\\
1, & \text{otherwise.}
\end{cases}
\end{gather*}

\begin{proposition}
\label{pr:boundary}
Let~$\m$ be an $N$-gon; then each edge of~$\m$ either lies on a horizontal or 
a vertical line or it is the edge of exactly one of the maximal slopes 
of~$\m$; thus,
\begin{equation*}
N = \sum_{k = 1}^4 N_k + \sum_{k = 1}^4 M_k
.
\end{equation*}
\end{proposition}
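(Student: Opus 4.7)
The plan is to orient the boundary of $\m$ counter-clockwise and classify each edge by its direction vector $(d_1, d_2)$. Since $\m$ is convex, successive direction vectors rotate strictly monotonically through a total angle of $2\pi$ as we go around the boundary. Hence every edge is either horizontal ($d_2 = 0$), vertical ($d_1 = 0$), or has its direction vector in exactly one of the four open coordinate quadrants, and by the monotonicity, the edges whose direction lies in any fixed open quadrant form a consecutive arc along the boundary.

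The main step is to identify each of these four open-quadrant arcs with the correct maximal slope. Consider, for instance, the arc on which $d_1 > 0$ and $d_2 < 0$. In the CCW order it is immediately preceded by edges with $d_1 \le 0, d_2 < 0$ and immediately followed by edges with $d_1 > 0, d_2 \ge 0$, so its initial vertex is the lowest point on the leftmost vertical line, namely $(\l, \ln)$, and its final vertex is the leftmost point on the lowest horizontal line, namely $(\nl, \n)$. Comparing the defining inequalities~\eqref{eq3-3}--\eqref{eq4-3} with the basis $(\mathbf e_1, \mathbf e_2)$ shows that this arc is precisely $Q_4$ and contributes $N_4$ edges. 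The same argument, applied to the remaining three open quadrants with the bases $(\mathbf e_2, -\mathbf e_1)$, $(-\mathbf e_1, -\mathbf e_2)$, $(-\mathbf e_2, \mathbf e_1)$, identifies the other three arcs with $Q_1$, $Q_2$, $Q_3$, contributing $N_1$, $N_2$, $N_3$ edges, respectively.

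Next I would account for the axis-aligned edges. Each lies between two consecutive open-quadrant arcs in the CCW cyclic order. A horizontal edge traversed in the $+x_1$ direction must lie on the line $x_2 = \n$ and connect $(\nl, \n)$ to $(\np, \n)$; this edge is present iff $\nl \ne \np$, contributing exactly $M_1$ edges. Analogously, the right vertical edge joining $(\p, \pn)$ to $(\p, \pv)$, the top horizontal edge joining $(\vp, \v)$ to $(\vl, \v)$, and the left vertical edge joining $(\l, \lv)$ to $(\l, \ln)$ contribute $M_2$, $M_3$, $M_4$ edges, respectively. Summing over the eight pairwise disjoint, exhaustive categories yields the claimed formula.

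The only mildly subtle point is the bookkeeping in degenerate cases, in which a maximal slope collapses to a single vertex (so $N_k = 0$) or one of the extremal ``sides'' of $\m$ collapses (so $M_k = 0$); but these situations are precisely what the quantities $N_k$ and $M_k$ are set up to record, so they cause no real difficulty.
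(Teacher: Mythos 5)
Your proof is correct. The paper explicitly leaves this proof to the reader, and your argument — classifying the CCW-oriented edges by the open quadrant (or axis) containing their direction vector, using convexity to see that each quadrant's edges form a consecutive arc matching the defining inequalities \eqref{eq3-3}--\eqref{eq4-3} and the prescribed endpoints of the corresponding $Q_k$, and accounting for the at most four axis-aligned edges via the $M_k$ — is exactly the standard argument the authors intend.
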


The point of Proposition~\ref{pr:boundary} is that if we want to estimate the 
number of edges of a polygon, we can do so by considering its maximal slopes 
and applying the techniques presented above.  The following statement is a 
helpful sufficient condition for a frame to split a maximal slope.

\begin{proposition}
\label{pr:th5-3}
Let $\m$ be a convex integer polygon and $(\mathbf o; \mathbf f_1, \mathbf 
f_2)$ be an integer frame such that $\mathbf f_1, \mathbf f_2 \in \{\pm 
\mathbf e_1, \pm \mathbf e_2\}$.  Suppose that $\mathbf o$ does not belong 
to~$\m$ and the rays $\{\mathbf{c} +\lambda
\mathbf{f}_j \colon \lambda\ge 0\}$ ($j=1,2$) split $\m$; then $(\mathbf o; 
\mathbf f_1, \mathbf f_2)$ splits $Q_k$, where
$$k=\left\{\begin{array}{lllll}
1,&\text{if}&(\mathbf f_1, \mathbf f_2)=(-\mathbf{e}_1,\mathbf{e}_2)&\text{or}&(\mathbf f_1, \mathbf f_2)=(\mathbf{e}_2,-\mathbf{e}_1),\\
2,&\text{if}&(\mathbf f_1, \mathbf f_2)=(-\mathbf{e}_2,-\mathbf{e}_1)&\text{or}&(\mathbf f_1, \mathbf f_2)=(-\mathbf{e}_1,-\mathbf{e}_2),\\
3,&\text{if}&(\mathbf f_1, \mathbf f_2)=(\mathbf{e}_1,-\mathbf{e}_2)&\text{or}&(\mathbf f_1, \mathbf f_2)=(-\mathbf{e}_2,\mathbf{e}_1),\\
4,&\text{if}&(\mathbf f_1, \mathbf f_2)=(\mathbf{e}_2,\mathbf{e}_1)&\text{or}&(\mathbf f_1, \mathbf f_2)=(\mathbf{e}_1,\mathbf{e}_2).\\
\end{array}\right.$$
\end{proposition}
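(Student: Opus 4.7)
The plan is to work in the frame coordinates $(\lambda_1,\lambda_2)$ defined by $\mathbf x=\mathbf o+\lambda_1\mathbf f_1+\lambda_2\mathbf f_2$, extract from the hypotheses a strong geometric constraint on~$\m$ relative to~$\mathbf o$, and then read the splitting property of~$Q_k$ off this constraint. Since the frame is axis-aligned, one can apply a unimodular transformation permuting $\{\pm\mathbf e_1,\pm\mathbf e_2\}$ to reduce to the case $(\mathbf f_1,\mathbf f_2)=(\mathbf e_1,\mathbf e_2)$ and $k=4$; the remaining seven bases will be handled by the same reduction, invoking Remark~\ref{rem3-4} where necessary to identify~$Q_k$ as a slope with respect to the specified basis rather than its reverse. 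A preliminary observation I would record first: since $\mathbf o\notin\m$, the ray $\{\mathbf o+\lambda\mathbf f_j\colon\lambda\ge0\}$ splits~$\m$ if and only if~$\m$ has points on both sides of the line $\{\lambda_{3-j}=0\}$ and $\m\cap\{\lambda_{3-j}=0\}\subseteq\{\lambda_j>0\}$, because a ray can disconnect a convex set only when the containing line does and the whole segment $\m\cap\text{line}$ lies on the ray, and $\mathbf o\notin\m$ then forces strict positivity.

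The central ingredient will be the claim that $\m\cap\{\lambda_1\le0,\,\lambda_2\le0\}=\emptyset$. The boundary cases $\lambda_j=0$ are immediate from the preliminary observation. For the interior case I would suppose $\mathbf p\in\m$ with $\lambda_1(\mathbf p),\lambda_2(\mathbf p)<0$ and pick some $\mathbf r\in\m$ with $\lambda_2(\mathbf r)=0$ and $\lambda_1(\mathbf r)>0$ (which exists by the preliminary observation); by convexity $[\mathbf p,\mathbf r]\subseteq\m$, and a direct linear interpolation shows this segment crosses $\{\lambda_1=0\}$ at a point with $\lambda_2<0$, contradicting $\m\cap\{\lambda_1=0\}\subseteq\{\lambda_2>0\}$.

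With these tools in hand I specialize to $k=4$ and $(\mathbf f_1,\mathbf f_2)=(\mathbf e_1,\mathbf e_2)$. The endpoints of~$Q_4$ are $(\l,\ln)$ and $(\nl,\n)$; from the preliminary observation $\l<o_1$ and $\n<o_2$, and then the forbidden-quadrant claim forces $\ln>o_2$ and $\nl>o_1$, so one endpoint lies in $\{\lambda_1<0,\,\lambda_2>0\}$ and the other in $\{\lambda_1>0,\,\lambda_2<0\}$, giving the first splitting condition. For the second condition I would parametrize~$Q_4$ continuously from one endpoint to the other; by~\eqref{eq3-3} the coordinate $\lambda_1$ is strictly increasing and $\lambda_2$ strictly decreasing along this parametrization, so each vanishes at a unique parameter $t_1$ and $t_2$ respectively. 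The case $t_1=t_2$ is impossible, as the common vanishing point would equal $\mathbf o\notin\m$; the case $t_2<t_1$ is ruled out by the forbidden-quadrant claim, since just past~$t_2$ the slope would enter $\{\lambda_1<0,\,\lambda_2<0\}$. Hence $t_1<t_2$, and any parameter strictly between them yields a point of~$Q_4$ with both frame coordinates positive. The main obstacle is the forbidden-quadrant claim; once it is in hand the rest is a short monotonicity argument on~$Q_k$, and the case analysis across the eight bases is purely clerical.
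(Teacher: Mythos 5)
Your proof is correct: the reduction of ``the ray splits $\m$'' to ``the line splits $\m$ and $\m$ meets it only at points with $\lambda_j>0$'', the resulting forbidden-quadrant claim $\m\cap\{\lambda_1\le0,\ \lambda_2\le0\}=\emptyset$, the placement of the two endpoints of $Q_k$ via $\l<o_1$, $\n<o_2$, and the monotonicity argument giving $t_1<t_2$ all check out, and the reduction of the eight bases to the case $(\mathbf e_1,\mathbf e_2)$ by a unimodular symmetry together with Remark~\ref{rem3-4} (or Proposition~\ref{pr:sa} for the swapped frame) is indeed routine. The paper explicitly leaves the proof of this proposition to the reader, so there is no authorial argument to compare against; yours is presumably the intended one.
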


The following simple statement also proves useful.

\begin{proposition}
\label{pr:th5-1}
Let $\m$ be a $\Gamma$-polygon, $S_1$ be the large $\mathbf{e}_1$-step of 
$\Gamma$, and $S_2$ be the large $\mathbf{e}_2$-step of $\Gamma$.  Then
\begin{gather*}
\np-\nl\ge S_1M_1,\ \pv-\pn\ge S_2M_2,\\
\vp-\vl\ge S_1M_3,\ \lv-\ln\ge S_2M_4.
\end{gather*}
\end{proposition}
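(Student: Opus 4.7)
The plan is to observe that each of the four asserted inequalities is trivial when the associated indicator $M_k$ vanishes, and that in the nontrivial case $M_k = 1$ the inequality follows at once from the definition of the large step. First I would dispatch the degenerate cases: if, say, $M_1 = 0$ then by definition $\nl = \np$, so $\np - \nl = 0 = S_1 M_1$, and the same reasoning handles $M_2, M_3, M_4 = 0$.

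Next I would treat the case $M_1 = 1$. Here $(\nl, \n)$ and $(\np, \n)$ are two distinct vertices of $\m$, hence two distinct $\Gamma$-points lying on the horizontal line $x_2 = \n$. Their difference is $(\np - \nl)\mathbf{e}_1$, a nonzero element of $\Gamma$ of the form $u \mathbf{e}_1$ with $u \in \z$. By the very definition of the large $\mathbf{e}_1$-step, the subgroup $\{u \in \z : u \mathbf{e}_1 \in \Gamma\}$ of $\z$ equals $S_1 \z$, so $\np - \nl$ is a positive integer multiple of $S_1$, whence $\np - \nl \ge S_1 = S_1 M_1$.

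The remaining three inequalities follow by the same short argument, applied in turn to the pairs of distinct $\Gamma$-vertices $(\p, \pn), (\p, \pv)$ on the vertical line $x_1 = \p$; $(\vl, \v), (\vp, \v)$ on the horizontal line $x_2 = \v$; and $(\l, \ln), (\l, \lv)$ on the vertical line $x_1 = \l$. In each case the vertex difference is a nonzero $\Gamma$-vector parallel to $\mathbf{e}_2$ or $\mathbf{e}_1$, hence a nonzero multiple of $S_2$ or $S_1$, respectively.

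There is really no obstacle here: the only content is the translation of the statement ``two distinct $\Gamma$-points lie on a coordinate line'' into ``their coordinate difference is a nonzero integer multiple of the large step,'' which is precisely how the large step was defined. I would therefore keep the proof to a few lines, noting explicitly only the case $M_1 = 1$ and leaving the remaining three by symmetry.
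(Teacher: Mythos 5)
Your proof is correct and is exactly the intended argument: the paper leaves this proposition to the reader, and the only content is that the difference of two distinct $\Gamma$-vertices on a common coordinate line is a nonzero $\Gamma$-vector proportional to $\mathbf e_1$ or $\mathbf e_2$, hence a nonzero multiple of the corresponding large step. Handling the $M_k=0$ cases trivially and the $M_k=1$ cases via the definition of the large step, as you do, is all that is required.
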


The proofs of Propositions~\ref{pr:boundary}, \ref{pr:th5-3}, 
and~\ref{pr:th5-1} are left to the reader.

\subsection{Application to type~II polygons}

In this section we present a demonstration of the tools developed above by 
proving Sub-Theorem~C for type~II polygons.

\begin{lemma}\label{lem7-1}
Suppose that $n \ge 3$ is an integer and $\m$ is a type~II$_n$ polygon; then
\begin{enumerate}[(i)]
\item $((n,0);-\mathbf{e}_1,\mathbf{e}_2)$ splits $Q_1$;
\item $((n,n);-\mathbf{e}_1,-\mathbf{e}_2)$ splits $Q_2$;
\item $((0,n);\mathbf{e}_1,-\mathbf{e}_2)$ splits $Q_3$;
\item $(\mathbf{0};\mathbf{e}_1,\mathbf{e}_2)$ splits $Q_4$;
\item if all the vertices of $\m$ belong to a $(1,n)$-lattice $\Gamma$, then 
the large $\mathbf{e}_1$-step and large $\mathbf{e}_2$-step of $\Gamma$ are 
greater then or equal to $2$.
\end{enumerate}
\end{lemma}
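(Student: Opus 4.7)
The plan is to establish (i)--(iv) by a single template of convexity arguments centred at the corners of the box $[0,n]^2$, and then to obtain (v) as a short corollary of (i) and (ii).  I will sketch (iv) in detail; the other three parts follow by rotating and reflecting the picture.  For $(\mathbf{0};\mathbf{e}_1,\mathbf{e}_2)$ to split $Q_4$ I need three things: the endpoint $(\l,\ln)$ must lie in the open northwest quadrant, the endpoint $(\nl,\n)$ must lie in the open southeast quadrant, and $Q_4$ itself must contain a point in the open northeast quadrant.  The type~II$_n$ hypothesis immediately yields $\l<0$ and $\n<0$; the remaining content of the proof is the signs of $\ln$ and $\nl$ and the third condition.

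The crucial preliminary observation is the following.  Since $[\mathbf{0},(0,n)]$ splits $\m$ and its endpoints $\mathbf{0}$ and $(0,n)$ are $\nz$-points (and so lie outside $\m$), the chord $\m\cap\{x_1=0\}$ is a nondegenerate segment whose $x_2$-coordinates are strictly between $0$ and $n$; the analogous statement holds for $\m\cap\{x_2=0\}$.  In particular, $\m$ contains a point $(0,a)$ and a point $(c,0)$ with $0<a,c<n$.  To see $\ln>0$, I assume $\ln\le 0$ and consider the convex hull of $(\l,\ln)$, $(0,a)$ and $(c,0)$: a one-line barycentric computation shows that this (possibly degenerate) triangle contains $\mathbf{0}$, forcing $\mathbf{0}\in\m$ and so giving a contradiction.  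The analogous argument with the roles of the two axes swapped gives $\nl>0$.  For the third condition, $Q_4$ is a continuous arc running from a northwest to a southeast vertex, so it must cross both coordinate axes; any point $(p,q)\in Q_4$ with $p\le 0$ and $q\le 0$ would, together with $(0,a)$ and $(c,0)$, again enclose $\mathbf{0}$ and is therefore forbidden.  Hence $Q_4$ must cross the vertical axis strictly above the origin and enter the open first quadrant.

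Parts (i), (ii), (iii) are proved by exactly the same recipe, centred respectively at the corners $(n,0)$, $(n,n)$ and $(0,n)$ and using the appropriate chords of $\m$ with the lines $x_1=n$ and $x_2=n$, together with the fact that the corresponding corner of the box does not lie in $\m$.  The full system of sign inequalities that results is
\[
\ln,\ \nl,\ \pn,\ \vl>0\qquad\text{and}\qquad \np,\ \lv,\ \pv,\ \vp<n.
\]

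For (v), suppose the large $\mathbf{e}_1$-step of $\Gamma$ equals $1$, i.e.\ $\mathbf{e}_1\in\Gamma$.  Since $\Gamma$ has invariant factors $(1,n)$ and determinant $n$, a brief Smith-normal-form argument forces $\Gamma=\z\times n\z$, so every vertex of $\m$ has its ordinate in $n\z$.  Then $\pv\in n\z$, whereas (i) and (ii) give $0<\pn\le\pv<n$, which is incompatible with $n\ge 3$.  Hence the large $\mathbf{e}_1$-step is at least $2$; the argument for the large $\mathbf{e}_2$-step is symmetric, using the vertex $(\nl,\n)$ and the inequalities $0<\nl\le\np<n$.  The main obstacle I foresee is organisational rather than conceptual: one must package the eight sign conditions to avoid four-fold repetition, and absorb the degenerate cases (such as $\ln=0$, when the triangle collapses to a segment) into the same argument.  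Each individual convex-hull calculation is itself only a one-liner.
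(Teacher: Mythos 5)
Your proof is correct, and it is essentially a self-contained version of the paper's: for parts (i)--(iv) the paper simply cites Proposition~\ref{pr:th5-3}, whose proof is left to the reader, whereas you reprove the relevant instances of that proposition from scratch. Your convexity argument --- that a point of $Q_4$ in the closed third quadrant would, together with interior points of the chords $\m\cap\{x_1=0\}$ and $\m\cap\{x_2=0\}$ (which lie strictly between adjacent $\nz$-points because the splitting segments have $\nz$-endpoints not in $\m$), force $\mathbf 0\in\m$ --- is exactly the geometric content hidden behind that citation, and your treatment of the degenerate cases such as $\ln=0$ and of condition (2) in the definition of a splitting frame is sound. For (v) the two arguments are equivalent up to packaging: the paper bounds the small $\mathbf e_1$-step by $n$ using the vertex $(\nl,\n)$ in the open slab $0<x_1<n$ and then applies Proposition~\ref{pr:steps}, while you argue by contraposition that a large $\mathbf e_1$-step equal to $1$ forces $\Gamma=\z\times n\z$ and contradicts the existence of the vertex $(\p,\pv)$ with $0<\pv<n$; both hinge on the sign inequalities extracted from (i)--(iv). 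Two cosmetic remarks: the incompatibility of $0<\pv<n$ with $\pv\in n\z$ needs no appeal to $n\ge3$, and the Smith-normal-form step could be replaced by a direct appeal to Proposition~\ref{pr:steps}, as in the paper.
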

\begin{proof}
Statements (i)--(iv) immediately follow from the definition of a type~II$_n$ 
polygon and Proposition~\ref{pr:th5-3}.  To prove (v), note that the $\det 
\Gamma = n$, so by Proposition~\ref{pr:steps} it suffices to show that the 
small $\mathbf e_1$- and $\mathbf e_2$-steps of $\Gamma$ are less then $n$.  
Obviously, the vertex $(\nl, \n)$ of~$\m$ lies in the slab $0 < x_1 < n$ (this 
follows, for example, from~(i) and~(ii)) and belongs to~$\Gamma$, so the small 
$\mathbf e_1$-step of~$\Gamma$ is indeed less than~$n$.  Likewise, the vertex 
$(\l, \ln)$ lies in the slab $0 < x_2 < n$, so the small $\mathbf e_2$-step 
of~$\Gamma$ is less than~$n$ as well.
\end{proof}

\begin{proof}[Proof of Sub-Theorem~C for type~II$_n$ polygons]
Assume that $\m$ is a type~II$_n$ $N$-gon whose vertices belong to~$\Gamma$, 
where either $\Gamma=\zz$, or $\Gamma$ is a $(1,n/2)$-lattice (which is only 
possible if $n$ is even), or a $(1,n)$-lattice.  Define $b$ as follows:
\begin{equation*}
b=
\begin{cases}
0,&\text{if } \Gamma=\zz,\\
1, &\text{if $\Gamma$ is a $(1, n/2)$-lattice}, \\
2, &\text{if $\Gamma$ is a $(1, n)$-lattice}.
\end{cases}
\end{equation*}
It suffices to show that
\begin{equation}\label{eq*-lem7-2}
N\le2n+2-2b.
\end{equation}

We begin by translating the geometrical constraints on $P$ into inequalities.

Evoking Corollary~\ref{cor3-6} and Theorem~\ref{th3-8} for the maximal slopes 
of~$\m$ and correspondent frames indicated in Lemma~\ref{lem7-1}, we obtain:
\begin{gather*}
2N_1\le -\np+\pn+n+\frac{b^2-3b}{2}, \\
2N_2\le -\vp-\pv+2n+\frac{b^2-3b}{2}, \\
2N_3\le \vl-\lv+n+\frac{b^2-3b}{2}, \\
2N_4\le \nl+\ln+\frac{b^2-3b}{2},
\end{gather*}
where the term $(b^2-3b)/2$ is chosen in such a way that it vanishes at $b = 
0$ and equals $-1$ at $b=1$ and $b=2$.  Further, by Proposition~\ref{pr:th5-1} 
we obtain
\begin{gather*}
\np-\nl\ge\frac{b^2-b+2}{2}M_1, \\
\pv-\pn\ge\frac{b^2-b+2}{2}M_2, \\
\vp-\vl\ge\frac{b^2-b+2}{2}M_3, \\
\lv-\ln\ge\frac{b^2-b+2}{2}M_4,
\end{gather*}
since if $b = 0$ or $b = 1$, the large $\mathbf e_1$- and $\mathbf e_2$-steps 
of~$\Gamma$ are at least~1, and if $b = 2$, by Lemma~\ref{lem7-1} we have that 
those steps are at least~2.

Using the above inequalities, we obtain:
\begin{multline*}
2N=\sum_{k=1}^42N_k+\sum_{k=1}^42M_k\le\left(-\np+\pn+n+\frac{b^2-3b}{2}\right)
\\
+\left(-\vp-\pv+2n+\frac{b^2-3b}{2}\right)+\left(\vl-\lv+n+\frac{b^2-3b}{2}\right)
\\
+\left(\nl+\ln+\frac{b^2-3b}{2}\right)+2M_1+2M_2+2M_3+2M_4=4n+2b^2-6b
\\
+\left(\frac{b^2-b+2}{2}M_1-(\np-\nl)\right)+\left(\frac{b^2-b+2}{2}M_2-(\pv-\pn)\right)
\\
+\left(\frac{b^2-b+2}{2}M_3-(\vp-\vl)\right)+\left(\frac{b^2-b+2}{2}M_4-(\lv-\ln)\right)
\\
+\frac{-b^2+b+2}{2}(M_1+M_2+M_3+M_4)
\\
\le 4n+2b^2-6b
+\frac{-b^2+b+2}{2}(M_1+M_2+M_3+M_4).
\end{multline*}
Observe that $(-b^2+b+2)/2\ge0$ for $b=0,1,2$, so we can proceed as follows:
$$2N\le 4n+2b^2-6b+\frac{-b^2+b+2}{2}\cdot 4=4n+4-4b,$$
which yields \eqref{eq*-lem7-2}.
\end{proof}

\section{Proof of Sub-Theorems A and B and of Sub-Theorem C for type I 
polygons}
\label{sec:AB}

\begin{proof}[Proof of Sub-Theorem A]
Assume that, contrary to our claim, there exists an integer polygon~$\m$ 
containing no integer points with even ordinates.  Let $T \subset \m$ be an 
integer triangle having no integer points apart from its vertices 
$\mathbf{a}=(a_1,a_2)$, $\mathbf{b}=(b_1,b_2)$, and $\mathbf{c}=(c_1,c_2)$.  
Clearly, the numbers $a_2$, $b_2$, and $c_2$ are odd.  Consequently, the area 
of $T$ is an integer number, as up to sign it equals
$$
\frac12
\begin{vmatrix}
b_1-a_1&b_2-a_2\\c_1-a_1&c_2-a_2
\end{vmatrix}
=
\begin{vmatrix}
b_1-a_1&(b_2-a_2)/2\\c_1-a_1&(c_2-a_2)/2
\end{vmatrix}
.
$$
However, by Pick's theorem the area of~$T$ equals $i + b/2 - 1 = 1/2$, where 
$i = 0$ is the number of integer points belonging to the interior of~$T$ and 
$b = 3$ is the number of integer points on the boundary; a contradiction.
\end{proof}

\begin{figure}
\includegraphics{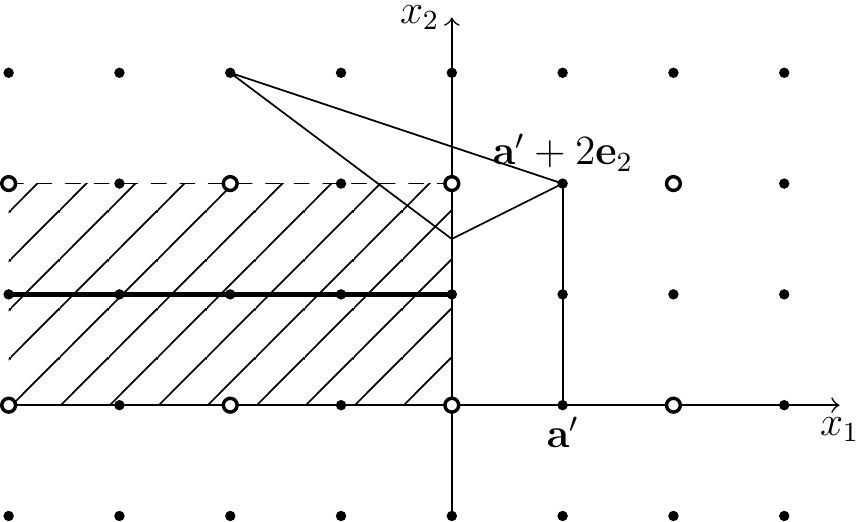}
\caption{The convex hull of the point $\mathbf a' = (1, 0)$, any point 
satisfying $x_1 < 0$ and $x_2 \le 0$, and any point of the segment $[(0, 0), 
(0, 2)]$ contains the point $(0, 0)$.  Likewise, the convex hull of the point 
$\mathbf a' + 2\mathbf e_2 = (1, 2)$, any point satisfying $x_1 < 0$ and $x_2 
\ge 2$, and any point of the segment $[(0, 0), (0, 2)]$ contains the point 
$(0, 2)$.  Thus, if a polygon is free of $2\zz$-points, contains $(1, 0)$ and 
$(1, 2)$ and has common points with $[\mathbf 0, (0, 2)]$, it cannot contain 
any point from the left half-plane not belonging to the hatched slab.  
Consequently, all the integer points of such a polygon belonging to the left 
half-plane lie on the line $x_2  = 1$.}
\label{fig:2Z2}
\end{figure}

\begin{proof}[Proof of Sub-Theorem B]
Conversely, suppose that $\m$ is an integer pentagon free of $2\zz$-points.  
We use the so-called parity argument based on the fact that the index of 
$2\zz$ in $\zz$ is~4.  This implies that the pentagon has two (distinct) 
vertices $\mathbf a \equiv \mathbf b \pmod{2\zz}$.  Consequently, 
$\mathbf{b}-\mathbf{a}=u\mathbf{f}$, where $\mathbf f$ is a $\zz$-primitive 
vector, and $u\ge 2$ is an integer.  Therefore, the segment $[\mathbf a, 
\mathbf b]$ contains at least three integer points.  By 
Proposition~\ref{pr:uni}, there exists a unimodular transformation~$A$ such 
that $A \mathbf f = \mathbf e_2$, then the segment $A[\mathbf a, \mathbf b]$ 
lies on a line $x_1 = c$, where $c \in \z$.  Observe that $c$ is odd, since 
otherwise every second integer point of the line would belong to $2\zz$, and 
thus the segment would contain a point of this lattice.

Let $T$ be the translation by the vector $(1 - c, 0) \in 2\zz$, then the 
points $\mathbf a' = TA \mathbf a$ and $\mathbf b' = TA \mathbf b$ lie on the 
line $x_1 = 1$.  They are vertices of the pentagon $\m' = TA \m$, which is 
still free of $2\zz$-points.  Clearly, one of the half-planes $x_1 < 1$ and 
$x_1 > 1$ (for definitiveness, the former) contains at least two vertices 
of~$\m'$.  These are integer points, so actually they lie in the half-plane 
$x_1 \le 0$.  Consequently, $\m'$ has common points with the line $x_1 = 0$, 
all lying between a pair of adjacent $2\zz$-points, say, on the segment  
$I=[(0,2m),(0,2m+2)]$, where $m$ is an integer.  We can certainly assume that 
$m = 0$, for if not, we replace $\m'$ by $\m' - (0, 2m)$.  Moreover, without 
loss of generality, $\mathbf a' = (1, 0)$, because if $\mathbf a' = (1, 
a'_2)$, we can replace~$\m'$ by $B\m'$, where
\begin{equation*}
B =
\begin{pmatrix}
1 & 0 \\
-a'_2 & 1\\
\end{pmatrix}
\end{equation*}
(note that $B$ preserves the first component of the vectors, so applying it we 
do not break previous assumptions).  To sum up, there is no loss of generality 
in assuming that the polygon $\m'$ contains the points $\mathbf a' = (1, 0)$, 
and $\mathbf a' + 2\mathbf e_2 = (1, 2)$, has common points with the segment 
$[\mathbf 0, (0, 2)]$ and has two vertices satisfying $x_1 \le 0$.  However, 
this is impossible, since the vertices of $\m'$ belonging to the said 
half-plane must lie on the line $x_2 = 1$ (see Figure~\ref{fig:2Z2}).
\end{proof}

\begin{proof}[Proof of Sub-Theorem~C for type~I$_n$ polygons]
For simplicity, assume that~$\m$ lies in the slab $0 \le x_1 \le n$.

All the integer points of the slab lie on $n + 1$ lines, so $N \le 2(n+1)$.

If the vertices of $\m$ belong to a lattice having small $\mathbf e_1$-step $s 
\ge 2$, we have
\begin{equation*}
N \le 2\left(\frac ns + 1\right)
\le 2\left(\Floor{\frac n2} + 1\right) \le 2n - 2
.
\end{equation*}

Now assume that $\m$ is a $\Gamma$-polygon, where $\Gamma$ is a lattice with 
small $\mathbf e_1$-step~1.

If $\Gamma$ is a $(1, n)$-lattice, by Proposition~\ref{pr:steps} the large 
$\mathbf e_2$-step of~$\Gamma$ is~$n$.  Consequently, all the points 
of~$\Gamma$ lying on the lines $x_1 = 0$ and $x_1 = n$ belong to $\nz$.  Thus, 
all the vertices of $\m$ lie on the $n-1$ lines
\begin{equation}
\label{eq:type1-proof}
x_1 = j \quad (j = 1, \dots, n - 1)
,
\end{equation}
whence $N \le 2(n-1)$.

If $\Gamma$ is a $(1, n/2)$-lattice, then the large $\mathbf e_2$-step 
of~$\Gamma$ is~$n/2$.  This implies that on the lines $x_1 = 0$ and $x_1 = n$ 
there is a single point of~$\Gamma$ between adjacent points of $\nz$.  Thus, 
$\m$ has at most 1 vertex on each of these lines and at most 2 vertices on 
each of the lines~\eqref{eq:type1-proof}, totalling at most $2(n-1) + 2 = 2n$ 
vertices.
\end{proof}

\section{Lattice diameter and lattice width}
\label{sec:diameter}

In this section we study properties of integer polygons free of $\nz$-points 
related to their lattice diameter and lattice width.  Our aim is to prove 
Proposition~\ref{pr:slab} and supply what is necesary for the proof 
of~Theorem~\ref{th:types}.

Following~\cite{BF01}, let us introduce

\begin{definition}
The \emph{lattice diameter} of an integer polygon $\m$ is
\begin{equation}
\ell(\m) = \max\{|P \cap \zz \cap L| - 1\}
,
\end{equation}
where the maximum is taken over all the straight lines~$L$ in the plane.
\end{definition}

Clearly, if we consider all strings of integer points in line , $\ell(\m) + 1$ 
is the greatest possible length of a string of integer points in a line 
contained in $\m$.

Affine automorphisms of~$\zz$ preserve the lattice diameter of polygons.

The following lemma provides a simple estimate of the lattice width of a an 
integer polygon~$\m$ in terms of its lattice diameter.  For the sake of 
completeness, we include the proof, even though it is implied by the reasoning 
used in the proof of Theorem~2 of~\cite{BF01}.

\begin{lemma}
\label{lem:th2-1}
Suppose that the convex integer polygon $\m$ contains the points~$\mathbf 0$ 
and $(0, \ell(\m))$.  Then $\m$ lies in the slab
\begin{equation*}
|x_1| \le \ell(\m) + 2
.
\end{equation*}
Moreover, no integer point lying on the lines $x_1 = \pm (\ell(\m) + 1)$ 
belongs to~$\m$.
\end{lemma}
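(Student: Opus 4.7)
The plan is to establish the two conclusions in sequence, writing $d := \ell(\m)$. By the reflection $x_1 \mapsto -x_1$, which is unimodular and preserves all hypotheses, it suffices in each case to rule out violations on the side $x_1 > 0$.

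I prove the forbidden-line claim first. Suppose $(d+1, k) \in \m$ for some $k \in \z$; by convexity of $\m$ the entire segment $[(0,0),(0,d)]$ lies in $\m$, so the $d+1$ lattice points $(0, j)$ for $j = 0, \dots, d$ all belong to $\m$, and hence so do the $d+1$ segments $[(0, j), (d+1, k)]$. Among the consecutive integers $k - j$ exactly one is divisible by $d + 1$; for the corresponding index $j^\ast$ the segment $[(0, j^\ast), (d+1, k)]$ carries $\gcd(d+1, k - j^\ast) + 1 = d + 2$ collinear lattice points of $\m$, contradicting $\ell(\m) = d$.

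For the slab inclusion it suffices to rule out integer points $(v_1, v_2) \in \m$ with $v_1 \ge d + 3$, since $\max_\m x_1$ is attained at an integer vertex. Assuming such a point exists, convexity gives $T := \operatorname{conv}\{(0,0),(0,d),(v_1, v_2)\} \subset \m$. My strategy is to exhibit an integer point of $\m$ on the line $x_1 = d + 1$, contradicting the forbidden-line claim. The vertical cross-section of $T$ at $x_1 = d + 1$ is a segment of length $L := d(v_1 - d - 1)/v_1$; whenever $L \ge 1$ it automatically contains an integer, and an elementary computation gives $L \ge 1$ precisely when $v_1 \ge d(d+1)/(d-1)$. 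Under the standing hypothesis $v_1 \ge d + 3$, this covers all $d \ge 3$ at once, as well as the sub-case $d = 2$, $v_1 \ge 6$.

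The remaining cases are small. For $d = 2$, $v_1 = 5$, a short check $\pmod 5$ (after applying a shear $(x_1, x_2) \mapsto (x_1, x_2 - k x_1)$ to place $v_2 \in \{0, 1, 2, 3, 4\}$) confirms that the fractional part of the chord endpoint $y_- := 3v_2/5$ always satisfies $\{y_-\} \ge 1 - L = 1/5$, so the chord still contains an integer and the argument closes as before. For $d = 1$ the chord is always too short, and I switch tactics: $\ell(\m) = 1$ applied to the bottom and top edges of $T$ forces $\gcd(v_1, v_2) = \gcd(v_1, v_2 - 1) = 1$, which yields a parity contradiction when $v_1$ is even, while for $v_1 \ge 5$ odd Pick's theorem applied to $T$ gives $|T \cap \zz| = (v_1 + 5)/2 \ge 5$; since there are only $4$ parity classes in $\zz$, two lattice points of $T$ must share both parities, their midpoint is a lattice point of $T$ by convexity, and together the three points violate $\ell(\m) = 1$. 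I expect the main obstacle to be the careful bookkeeping across these low-$d$ cases, since the generic chord argument is conceptually clean but breaks down precisely for $d = 1$ and leaves $d = 2$, $v_1 = 5$ on the borderline.
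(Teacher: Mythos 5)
Your proof is correct and follows essentially the same route as the paper's: the forbidden lines $x_1=\pm(\ell+1)$ are excluded by a residue/pigeonhole argument producing a chord with $\ell+2$ collinear lattice points, and the slab bound follows from the length of the cross-section of the triangle at $x_1=\ell+1$, with the borderline cases $\ell=2,\ v_1=5$ and $\ell=1$ treated separately. Your handling of the $\ell=1$ case (gcd/parity plus Pick's theorem and a parity-class pigeonhole) and of the $v_2\equiv 0$ subcase for $\ell=2$ (where $y_-$ is itself an integer rather than having fractional part $\ge 1/5$) differs in detail from the paper's figure-based checks but is sound.
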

\begin{proof}
Set $\ell = \ell(\m)$ and $\mathbf b = (0, \ell)$.  By convexity, $\m$ 
contains the segment $[\mathbf 0, \mathbf b]$ having the integer points 
$\mathbf 0$, $(0, 1)$, \dots, $(0, \ell)$.

Let us show that no integer point of the lines $x_1= \pm (\ell+1)$ belongs 
to~$\m$. Consider the point $\mathbf{z}=(\eps (\ell + 1),z)$, where $\eps=\pm 
1$ and~$z$ is an arbitrary integer.  Let
$$z=(\ell + 1)q+r,\qquad q,r\in\z,\;0\le r\le \ell.$$
If the point~$\mathbf{z}$ belonged to~$\m$, by convexity the polygon would 
contain the segment $[r\mathbf{e}_2,\mathbf{z}]$ having $\ell + 2$ integer 
points $r\mathbf{e}_2+j(\eps \mathbf{e}_1+q\mathbf{e}_2)$, where  $j=0,\ldots, 
\ell + 1$, which contradicts the definition of the lattice diameter.  
Consequently, $\mathbf z\notin \m$, as claimed.

Let us show that $\m$ is contained in the half-plane $x_1 \le \ell + 2$.

Let $\mathbf v = (v_1, v_2) \in \zz$ be the rightmost vertex of $\m$.  There 
is no loss of generality in assuming that $0 \le v_1 < v_2$, for otherwise we 
could replace~$\m$ by its image under a suitable unimodular transformation 
having a lower-triangular matrix (the line $x_1 = 0$ is invariant under such 
transformations, so the hypotheses of the theorem persist).

We must prove that $v_1 \le \ell + 2$.  If $v_1 \le \ell + 1$, there is 
nothing to prove, so assume that $v_1 \ge \ell + 2$.  By convexity, $\m$ 
contains the triangle $T$ having the vertices $\mathbf 0$, $\mathbf b$, and 
$\mathbf v$.  The line $x_1 = \ell + 1$ intersects~$T$ by a segment~$I$ of 
length
\begin{equation*}
d =
\frac{
\ell(v_1 - \ell - 1)
}{
v_1
}
.
\end{equation*}
First, suppose that $\ell \ge 2$.  If $d\ge 1$, $I$ necessarily contains at 
least one integer point of the line $x_1= \ell + 1$, which is impossible by 
the above.  Consequently, we have
\begin{equation*}
\frac{
\ell(v_1 - \ell - 1)
}{
v_1
}
< 1
.
\end{equation*}
The numerator and the denominator are positive integers, so we get
\begin{equation*}
\ell (v_1 - \ell - 1) \le v_1 - 1
,
\end{equation*}
whence
\begin{equation}
\label{eq:maxseg1}
v_1\le \ell + 2 + \frac{1}{\ell-1}
.
\end{equation}
As $v_1$ is an integer, this yields the desired inclusion provided that $\ell 
\ge 3$.

If $\ell = 2$, inequality~\eqref{eq:maxseg1} becomes $v_1 \le 5$.  However, 
setting $v_1 = 5$ and checking possible values $v_2 = 0, 1, \dots, 4$, we see 
that $T$ invariably contains an integer point lying on the line $x_1 = \ell + 
1$ (Figure~\ref{fig:l=2}).  This is impossible, so actually $v_1 \le 4$, as 
claimed.

For the case $\ell = 1$, see Figure~\ref{fig:l=2}.

To prove that $\m$ is contained in the half-plane $x_1 \ge -\ell - 2$, it 
suffices to reflect $\m$ about the line $x_1 = 0$ and apply the established 
part of the theorem.
\end{proof}

\begin{figure}
\includegraphics{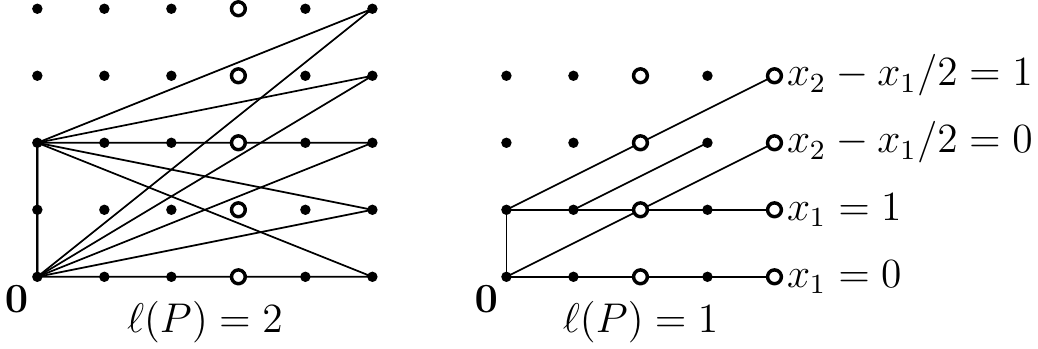}
\caption{Proof of Lemma~\ref{lem:th2-1}.  Assuming that $\ell(\m) = 2$ and 
$v_1 = 5$, we consider all possible cases for $\mathbf v$ and see that the 
triangle~$T$ invariably contains an integer point lying on the line $x_1 = 3$, 
which is impossible.  Now assume that $\ell(\m) = 1$.  As the common points 
of~$T$ and the line $x_1 = 2$ must lie between adjacent integer points, we see 
that $\mathbf v$ must lie either in the slab $0 < x_2 < 1$ or in the slab $0 < 
x_2 - x_1/2 < 1$.  The former case is clearly impossible.  In the latter case, 
because $T$ cannot contain a segment with more than two integer points, we see 
that $v_1 \le 3$, as claimed.
}
\label{fig:l=2}
\end{figure}

\begin{lemma}
\label{lem2-1}
Suppose that an integer polygon $\m$ is free of $\nz$-points, where $n \in 
\z$, $n \ge 2$, and that $\m$ has an integer segment with $\ell(P) + 1$ 
integer points lying on the line $x_1=c$, where
\begin{equation}
\label{eq1-lem2-1}
0\le c\le n
;
\end{equation}
then $\m$ is contained in the slab
$$-n+1\le x_1\le 2n-1.$$
\end{lemma}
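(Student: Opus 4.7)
The plan is to deduce the desired slab inclusion by first applying Lemma~\ref{lem:th2-1} to produce a preliminary slab of width $2\ell+4$ about the line $x_1 = c$, and then sharpening it to the asserted slab $-n+1 \le x_1 \le 2n-1$ by exploiting the $\nz$-freeness of~$\m$ together with the fact that no line carries more than $\ell(\m)+1$ integer points of~$\m$. Writing $\ell = \ell(\m)$ and taking $y_0$ such that $I = [(c,y_0),(c,y_0+\ell)]$, I would translate~$\m$ by $(-c,-y_0)$ so that $I$ becomes $[\mathbf 0,(0,\ell)]$; Lemma~\ref{lem:th2-1} applied to the translate and then translated back yields
\begin{equation*}
\m \subset \{c-\ell-2 \le x_1 \le c+\ell+2\},
\end{equation*}
and moreover forbids integer points of~$\m$ from lying on $x_1 = c\pm(\ell+1)$. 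The reflection $x_1 \mapsto n-x_1$ is an affine automorphism of~$\nz$ preserving every hypothesis (it sends $c$ to $n-c \in [0,n]$) and swaps the two ends of the target slab, so it suffices to prove the lower bound $\l \ge -n+1$.

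The core of the argument is a contradiction derived from assuming $\l \le -n$. Supposing such an integer vertex $(\l,v_2)$ of~$\m$ exists and setting $L = -\l \ge n$, convexity gives $\m \supset T$, where $T$ is the triangle with vertices $(\l,v_2)$, $(c,y_0)$ and $(c,y_0+\ell)$. The cross-section $T \cap \{x_1 = 0\}$ is a segment of length $\ell L/(c+L)$ sitting inside~$\m$; since the $\nz$-points on the line $x_1 = 0$ are $(0,kn)$ spaced by~$n$, this cross-section must lie strictly between two consecutive such points, yielding
\begin{equation*}
(\ell - n)\,L < nc.
\end{equation*}
I would first dispose of the main case $v_2 \in \{y_0,\ldots,y_0+\ell\}$: then the horizontal line $y = v_2$ passes through both $(\l,v_2)$ and $(c,v_2) \in I \subset \m$, so convexity places the whole segment between them in~$\m$, producing $c+L+1$ integer points on a single line. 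Since~$I$ realises the lattice diameter, this forces $c+L \le \ell$, so that $\ell-n \ge c$ and $L \ge n$ give $(\ell-n)L \ge cn$, contradicting the displayed inequality.

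The residual configuration $v_2 \notin \{y_0,\ldots,y_0+\ell\}$ reduces, via the reflection $x_2 \mapsto -x_2$, to $v_2 < y_0$, and the horizontal-line trick is no longer applicable. Here the plan is to examine instead the $\ell+1$ segments joining $(\l,v_2)$ to the integer points $(c,y_0+k)$ of~$I$, each carrying $\gcd(c+L,\,y_0+k-v_2)+1$ integer points of~$T$. Since Lemma~\ref{lem:th2-1} forces $c+L \le \ell+2$ when $\l \le -n$, and since the integers $y_0+k-v_2$ cover $\ell+1$ consecutive positive values as $k$ ranges over $\{0,1,\ldots,\ell\}$, a divisibility analysis should produce, for each sub-configuration, either a segment of~$T$ with at least $\ell+2$ integer points (contradicting $\ell(\m)=\ell$) or a $\nz$-point inside the cross-section at $x_1 = 0$ (contradicting $\nz$-freeness). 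I expect this last case to be the main technical obstacle, as extracting the forbidden segment or the hidden $\nz$-point requires bookkeeping simultaneously modulo $c+L$ and modulo~$n$. The bound $\p \le 2n-1$ then follows by applying the whole argument after the symmetry $x_1 \mapsto n-x_1$.
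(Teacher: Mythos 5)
Your reduction to the lower bound via the reflection $x_1 \mapsto n - x_1$, the preliminary slab from Lemma~\ref{lem:th2-1}, and the cross-section inequality $(\ell-n)L < nc$ at the line $x_1 = 0$ are all sound, and your ``main case'' in fact does more than you claim: the same two lines of algebra dispose of \emph{every} configuration with $c + L \le \ell$, whatever $v_2$ is, since then $\ell - n \ge c$ and $L \ge n$ already contradict $(\ell-n)L < nc$. Combined with Lemma~\ref{lem:th2-1}, which gives $c+L \le \ell+2$ and excludes $c+L = \ell+1$ (no integer point of~$\m$ on $x_1 = c - \ell - 1$), this pins the remaining situation down to $c + L = \ell+2$ with $L \in \{n, n+1\}$ and the apex $(\l, v_2)$ outside the vertical range of the diameter segment --- exactly your ``residual configuration,'' which you explicitly leave unproven (``should produce,'' ``I expect this last case to be the main technical obstacle''). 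That is a genuine gap, and it is the crux of the lemma: it corresponds to the paper's analysis of $v_1 \in \{2n, 2n+1\}$.

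Moreover, the toolkit you propose for that case is insufficient. The $\gcd$ count on the segments from $(\l,v_2)$ to $(c, y_0+k)$ only yields a contradiction when $(\ell+2) \mid (y_0 + k - v_2)$ for some $k$; since these are $\ell+1$ consecutive integers they miss exactly one residue class mod $\ell+2$, so the analysis always leaves the escape $y_0 - v_2 \equiv 1 \pmod{\ell+2}$, where no segment to $I$ carries more than two integer points. Nor can the cross-section at $x_1 = 0$ alone supply the hidden $\nz$-point: for $c+L = \ell+2$ and $L \in \{n, n+1\}$ its length $\ell L/(\ell+2)$ is automatically less than $n$, so whether it contains a point $(0, tn)$ depends on $y_0 \bmod n$, which nothing in your argument constrains. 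The missing ingredient is the \emph{second} lattice line: since $\l \le -n$, the line $x_1 = -n$ also meets $\m$ strictly between two consecutive $\nz$-points, and a shear automorphism of $\nz$ lets one place both windows, at $x_1 = 0$ and at $x_1 = -n$, over the same interval $[0,n]$ simultaneously. This positional normalization is what the paper uses (its inequalities \eqref{eq7-lem2-1}--\eqref{eq9-lem2-1}); the two aligned windows force $v_2$ back into $\{y_0, \dots, y_0+\ell\}$, i.e.\ they show your residual configuration cannot occur, after which the horizontal-segment contradiction of your main case (the paper's segment $[(c,v_2),\mathbf v]$ with $\ell+3$ integer points) finishes the proof. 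You need to add this two-line argument; the divisibility analysis will not close the case on its own.
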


\begin{proof}
Set $\ell = \ell(\m)$ and let $\mathbf a = (c, a)$ and $\mathbf b=(c, a + 
\ell)$ be the endpoints of the segment mentioned in the hypothesis of the 
lemma.

Applying Lemma~\ref{lem:th2-1} to the polygon~$\m - \mathbf a$, we see 
that~$\m$ lies in the slab $|x_1 - c| \le \ell + 2$ and has no common integer 
points with the lines $x_1 = c \pm (\ell + 1)$.

Let us show that $\m$ lies in the half-plane $x_1 \le 2n-1$.  Assume the 
converse.  Then the rightmost vertex $\mathbf v = (v_1, v_2)$ of $\m$ 
satisfies
\begin{equation}
\label{eq2-lem2-1}
2n \le v_1 \le c + \ell + 2, \qquad v_1 \ne c + \ell + 1
.
\end{equation}
Consequently, the lines $x_1 = n$ and $x_1 = 2n$ intersect $\m$.  Clearly, the 
intersections must lie between pairs of adjacent points of $\nz$ belonging to 
respective lines, i.~e.\ on some segments $I_1=[(n,u_1n),(n,(u_1+1)n)]$ and 
$I_2=[(2n,u_2n),(2n,(u_2+1)n)]$, where $u_1, u_2\in\z$.  There is no loss of 
generality in assuming that $I_1 = [(n, 0), (n, n)]$ and $I_2 = [(2n, 0), (2n, 
n)]$, since otherwise we could replace $\m$ by its image under the affine 
automorphism $\varphi$ of $\nz$ given by
\begin{equation*}
\varphi(x_1,x_2)=(x_1,(u_1-u_2)x_1+x_2+(u_2-2u_1)n),
\end{equation*}
which does not affect the first coordinate and maps $I_1$ onto $[(n, 0), 
(n,n)]$ and $I_2$ onto $[(2n, 0), (2n, n)]$.

Let us show the inequality
\begin{equation}
\label{eq5-lem2-1}
v_1\le 2n+1.
\end{equation}

The intersection of the line $x_1 = n$ with the triangle with the 
vertices~$\mathbf a$, $\mathbf b$,~and~$\mathbf v$ is a segment $J \subset 
I_1$.  We have:
\begin{equation}
\label{eq4-lem2-1}
\frac{\ell(v_1-n)}{v_1-c}<n
,
\end{equation}
where the left-hand side is the length of $J$.

Assuming that~\eqref{eq5-lem2-1} is not valid, we have $v_1 \ge 2n + 2$.  
By~\eqref{eq2-lem2-1}, we have $v_1 - c \le \ell + 2$ and $\ell \ge v_1 - c - 
2 \ge n$;  thus,
\begin{multline*}
\frac{\ell(v_1-n)}{v_1-c}
\ge
\frac{\ell(n+2)}{\ell + 2}
\\
=
\left(
1-\frac{2}{\ell + 2}
\right)
(n + 2)
\ge
\left(
1-\frac{2}{n+2}
\right)
(n + 2)
=n
,
\end{multline*}
contrary to~\eqref{eq4-lem2-1}, and~\eqref{eq5-lem2-1} is proved.

In view of~\eqref{eq2-lem2-1} and~\eqref{eq5-lem2-1} we have only two possible 
values for~$v_1$: $v_1=2n$ and $v_1=2n+1$.

Further, \eqref{eq4-lem2-1} gives
$$ \ell <\frac{n}{v_1-n}(v_1-c)\le v_1-c,$$
whence
$$\ell \le v_1-c - 1.$$
Comparing this with \eqref{eq2-lem2-1}, we see that necessarily
\begin{equation}
\label{eq6-lem2-1}
\ell =v_1-c-2.
\end{equation}

\begin{figure}
\includegraphics{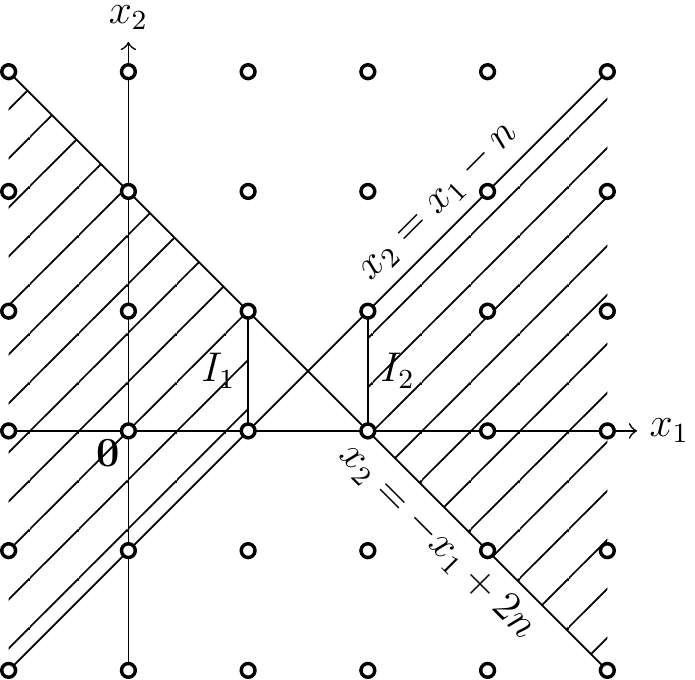}
\caption{Proof of Lemma~\ref{lem2-1}.  The lines passing through $\mathbf a$ 
and $\mathbf v$ and through $\mathbf b$ and $\mathbf v$ belong to the set of 
lines joining interior points of $I_1$ with interior points of $I_2$, so the 
$\mathbf a$ and~$\mathbf b$ belong to the hatched region on the left and 
$\mathbf v$ belongs to the one on the right. }
\label{fig:lem21}
\end{figure}

Let us estimate~$v_2$ and~$a$.  It is easily seen (see Figure~\ref{fig:lem21}) 
that the coordinates of $\mathbf v$, $\mathbf a$, and $\mathbf b$ satisfy
\begin{gather}
-v_1+2n<v_2<v_1-n,
\label{eq7-lem2-1}
\\
a > c-n,
\label{eq8-lem2-1}
\\
a+\ell<-c+2n.
\label{eq9-lem2-1}
\end{gather}
From \eqref{eq7-lem2-1} we get
\begin{equation}\label{eq10-lem2-1}
\left.\begin{array}{lll}1\le v_2\le
n-1&\text{if}&v_1=2n,\\0\le v_2\le
n&\text{if}&v_1=2n+1,\end{array}\right\}
\end{equation}
whereas from~\eqref{eq8-lem2-1} and~\eqref{eq9-lem2-1}  we obtain
\begin{gather}
a+ \ell \ge c-n+\ell + 1,
\label{eq11-lem2-1}
\\
a\le-c+2n- \ell - 1.
\label{eq12-lem2-1}
\end{gather}

Assume that $v_1 = 2n$.  According to~\eqref{eq6-lem2-1}, we have
$$\ell=2n-c-2,$$
so \eqref{eq11-lem2-1} and~\eqref{eq12-lem2-1} yield
$$a\le 1,\ a+\ell\ge n-1.$$
These inequalities and \eqref{eq10-lem2-1} imply that the integer point 
$(c,v_2)$ belongs to~$[\mathbf a, \mathbf b]$.  But then $\m$ contains the 
integer segment $[(c,v_2),\mathbf{v}]$ having $v_1-c+1=\ell+3$ integer points 
(according to~\eqref{eq6-lem2-1}).  This contradicts the definition of the 
lattice diameter.

Now assume $v_1=2n+1$.  From~\eqref{eq6-lem2-1} we get
$$\ell=2n-c - 1,$$
and from \eqref{eq11-lem2-1} and~\eqref{eq12-lem2-1} it follows that
$$a\le0,\ a+\ell \ge n.$$
These inequalities and \eqref{eq10-lem2-1} imply a contradiction exactly as in 
the case $v_1=2n$.

The contradictions show that in fact $\m$ lies in the half-plane $x_1 \le 2n - 
1$.  To show that it lies in the half-plane $x_1\ge - n+1$ as well, it 
suffices to apply the established part of the lemma to the reflection of $\m$ 
about the line $x_1=n/2$.
\end{proof}

\begin{proof}[Proof of Proposition~\ref{pr:slab}]
Let $[\mathbf a, \mathbf b] \in \m $ be a segment containing $\ell(\m) + 1$ 
integer points.  By Proposition~\ref{pr:uni}, there exists a unimodular 
transformation~$A$ such that $A(\mathbf b - \mathbf a) = t\mathbf e_2$, then 
the segment $A[\mathbf a, \mathbf b]$ lies on a line $x_1 = m$, where $m$ is 
an integer.  Let $m=nq+c$, where $q,c\in\z$ and $0\le c\le n-1$ and let $T$ be 
the translation by the vector $-nq\mathbf{e}_1\in\nz$.  Then the segment 
$TA[\mathbf a, \mathbf b]$ contains $\ell(P) + 1 = \ell(TA\m) + 1$ integer 
points and lies on the line $x_1=c$.  In view of Lemma~\ref{lem2-1}, $\psi = 
TA\m$ is the required automorphism.
\end{proof}
\begin{remark}
Let $\m$ be an integer polygon free of $\nz$-points and let $\psi$ be an 
automorphism of~$\nz$ constructed in the proof of Proposition~\ref{pr:slab}.  
As the polygon $\psi(\m)$ does not contain points of the lattice $\nz$, its 
intersection with the line $x_1 = 0$ lies between two adjacent points of 
$\nz$, i.~e.\ it is a subset of a segment $I_1=[(0,u_1n),(0,u_1n+n)]$, where 
$u_1 \in \z$ (if the intersection is empty, $u_1$ can be chosen arbitrarily).  
Likewise, the intersection of $\psi(\m)$ with the line $x_1 = n$ is a subset 
of a segment $I_2=[(n,u_2n),(n,u_2n+n)]$, where $u_2 \in \z$.  Define the 
affine automorphism of $\nz$ by
\begin{equation*}
\tilde \psi(x_1, x_2)
= (x_1, x_2 + (u_1 - u_2) x_1 - u_1 n)
.
\end{equation*}
Since it preserves the first coordinate and maps $I_1$ onto 
$[\mathbf{0},(0,n)]$ and $I_2$ onto $[(n,0),(n,n)]$, it is easily seen that 
the automorphism $\tilde \psi\psi$ satisfies the requirements of 
Proposition~\ref{pr:slab} and Remark~\ref{rem:slab}.
\end{remark}

The following lemma is used in the proof of Theorem~\ref{th:types}.

\begin{figure}
\includegraphics{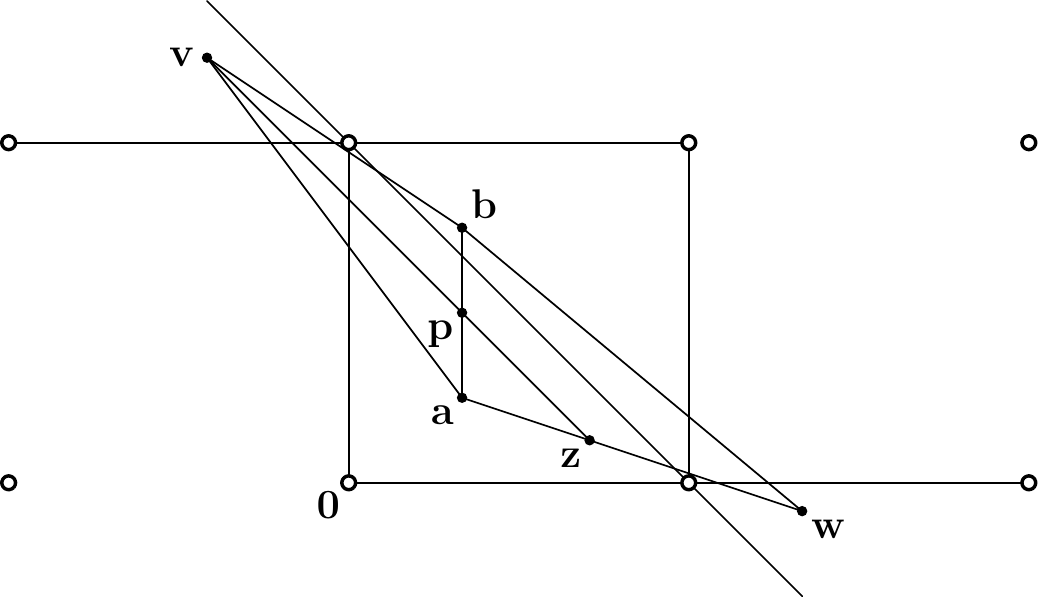}
\caption{Lemma~\ref{lem:imposs}}
\label{fig:imposs}
\end{figure}

\begin{lemma}
\label{lem:imposs}
Under the hypotheses of Lemma~\ref{lem2-1}, the segments $[(0, n), (-n, n)]$ 
and $[(n, 0), (2n, 0)]$ cannot simultaneously split~$\m$.  The same is true 
about the pair of segments $[\mathbf 0, (-n, 0)]$ and $[(n, n), (2n, n)]$.
\end{lemma}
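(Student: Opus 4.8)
The plan is to argue by contradiction and reduce everything to the infeasibility of a short system of Diophantine inequalities, much as in the proof of Lemma~\ref{lem2-1}. Write $\ell=\ell(\m)$. First note that the two assertions are exchanged by the affine automorphism $\rho(x_1,x_2)=(x_1,n-x_2)$ of $\nz$ (its linear part is $\diag(1,-1)$ and its translation part $(0,n)$ lies in $\nz$). Since $\rho$ fixes every vertical line, it preserves the hypotheses of Lemma~\ref{lem2-1}, and it carries $[(0,n),(-n,n)]$ and $[(n,0),(2n,0)]$ onto $[\oo,(-n,0)]$ and $[(n,n),(2n,n)]$. Hence it is enough to treat the first pair. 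Assume then that both $[(0,n),(-n,n)]$ and $[(n,0),(2n,0)]$ split $\m$. Their endpoints are $\nz$-points and so do not lie in $\m$; since $\m$ is $\nz$-free, a point of $\m$ on $x_2=n$ with $-n<x_1<0$ forbids $\m$ from reaching $x_1\ge 0$ on that line, and similarly on $x_2=0$. Thus $\m\cap\{x_2=n\}$ is a nondegenerate segment contained in $-n<x_1<0$ and $\m\cap\{x_2=0\}$ one contained in $n<x_1<2n$.

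Next I would locate the long segment $[\mathbf a,\mathbf b]$ carrying $\ell+1$ integer points on $x_1=c$, $0\le c\le n$, with $\mathbf a=(c,a)$, $\mathbf b=(c,a+\ell)$. It cannot meet either line $x_2=0$ or $x_2=n$ (that would put a point of $\m$ on one of the two chords at $x_1=c\in[0,n]$, contradicting the confinement, or an $\nz$-point into $\m$ when $c\in\{0,n\}$), so it lies in one of the three open horizontal strips. The outer two are excluded: if, say, $[\mathbf a,\mathbf b]$ lay above $x_2=n$, then joining a point of it to a point of $\m$ below $x_2=0$ (one exists because the second segment splits $\m$) would yield a point of $\m$ on $x_2=n$ with $x_1\ge 0$, contradicting the confinement. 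Hence $[\mathbf a,\mathbf b]\subset\{0<x_2<n\}$, i.e.\ $a\ge1$ and $a+\ell\le n-1$. In this situation two \emph{forbidden–quadrant} facts hold: no point of $\m$ has $x_1\le n$ and $x_2<0$, and none has $x_1\ge 0$ and $x_2>n$ (each is proved by the same crossing argument, using $\mathbf a$ resp.\ $\mathbf b$ together with the confinement of the opposite chord). In particular the lowest vertex $B=(B_1,B_2)$ satisfies $B_1>n$, $B_2\le -1$, and the highest vertex $T=(T_1,T_2)$ satisfies $T_1<0$, $T_2\ge n+1$.

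The heart of the matter is to feed these facts through convexity. Because $\m$ contains the vertical segment of $\ell+1$ points, Lemma~\ref{lem:th2-1} (applied to $\m-\mathbf a$) confines $\m$ to $|x_1-c|\le \ell+2$ and forbids integer points on the lines $x_1=c\pm(\ell+1)$; this constrains $T_1$ and $B_1$ to the few extreme admissible values $T_1\ge c-\ell-2$, $B_1\le c+\ell+2$. Since $[\mathbf a,B]\subset\m$ must lie on or above $x_2=0$ at $x_1=n$, and $[\mathbf b,T]\subset\m$ must lie on or below $x_2=n$ at $x_1=0$, one obtains
\[
a\,(B_1-n)\ \ge\ (n-c)\,|B_2|,\qquad\qquad c\,(T_2-n)\ \le\ (n-a-\ell)\,|T_1|,
\]
to be read together with $a\ge1$, $a+\ell\le n-1$, $B_2\le -1$, $T_2\ge n+1$, and the extent bounds above. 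Substituting $|B_2|\ge1$, $B_1\le c+\ell+2$, $|T_1|\le \ell+2-c$, $T_2-n\ge1$ turns these into $a\,(c+\ell+2-n)\ge n-c$ and $c\le(\ell+2-c)(n-a-\ell)$; the first forces $c$ (or $a$) large while the second forces $c$ small, and together with $n-\ell-1\le c\le\ell+1$ they are jointly infeasible. This is the contradiction, and it simultaneously shows why the two excluded configurations of Figure~\ref{fig:types-pf} are impossible.

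I expect the main obstacle to be purely the bookkeeping of this inequality system: stating the forbidden–quadrant facts cleanly, translating the two convexity conditions into the displayed inequalities with correct signs, and carrying out the finite sub-case analysis needed because Lemma~\ref{lem:th2-1} pins $T_1,B_1$ down only to a small range of extreme values rather than to single numbers. The geometric content (confinement of the two chords and placement of $[\mathbf a,\mathbf b]$ in the middle strip) is routine; the delicate part is verifying that the resulting Diophantine constraints admit no common solution.
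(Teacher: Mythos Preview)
Your symmetry reduction via $\rho(x_1,x_2)=(x_1,n-x_2)$ is fine, and the preliminary geometry (confining the two chords to $(-n,0)\times\{n\}$ and $(n,2n)\times\{0\}$, placing $[\mathbf a,\mathbf b]$ in the strip $0<x_2<n$, and the forbidden-quadrant facts) is all correct and matches what the paper does. The gap is in the final step: the two displayed inequalities you extract from the segments $[\mathbf a,B]$ and $[\mathbf b,T]$, together with the extent bounds from Lemma~\ref{lem:th2-1}, are \emph{not} jointly infeasible. For instance, take $n=10$, $c=5$, $a=2$, $\ell=6$ (so $\mathbf a=(5,2)$, $\mathbf b=(5,8)$), $B=(13,-1)$, $T=(-3,11)$. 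Then $a(B_1-n)=6>5=(n-c)|B_2|$ and $c(T_2-n)=5<6=(n-a-\ell)|T_1|$, while $B_1=c+\ell+2$ and $T_1=c-\ell-2$ sit at the allowed extreme values; all of your constraints (including the strict ones coming from $(n,0),(0,n)\notin\m$ and the forbidden lines $x_1=c\pm(\ell+1)$) are satisfied. So no amount of ``bookkeeping'' on this particular system will close the argument; a genuinely stronger geometric input is needed.

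The paper supplies that missing input by a different mechanism: instead of comparing slopes at the two chords, it projects the high vertex $\mathbf v$ onto the line $x_1=c$ along the direction $\mathbf e_1-\mathbf e_2$, shows the image lands on $[\mathbf a,\mathbf b]$, and then extends the projecting line inside the triangle $\mathbf a\,\mathbf b\,\mathbf w$ to hit the side $[\mathbf a,\mathbf w]$ at a point $\mathbf z$. The resulting segment $[\mathbf v,\mathbf z]\subset\m$ is shown to carry at least $n-a+2\ge \ell+3$ integer points, contradicting the definition of $\ell(\m)$. In the numerical example above this produces a segment with $10$ integer points versus $\ell+1=7$ on the diameter. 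Your convexity inequalities only see how $[\mathbf a,B]$ and $[\mathbf b,T]$ cross the two horizontal lines; they do not detect this long diagonal segment, which is why the system remains feasible. To repair your approach you would have to import an inequality equivalent to the paper's count, at which point you are essentially reproducing its argument.
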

\begin{proof}
By symmetry, it suffices to consider only the first pair of segments.  To 
obtain a contradiction, we assume that both segments split~$\m$.  Then we see 
that $\m$ has vertices $\mathbf v = (v_1, v_2)$ and $\mathbf w = (w_1, w_2)$ 
satisfying
\begin{gather}
v_1 \le -1, v_2 \ge n + 1;
\label{eq:imposs1}
\\
w_1 \ge n+1, \ w_2 \le -1
\label{eq:imposs2}
\end{gather}
(see Figure~\ref{fig:imposs}).

Let $\mathbf a = (c, a)$ and $\mathbf b = (c, b)$ be the endpoints of the 
segment mentioned in the hypothesis, so that $b - a = \ell(P)$.  Observe that
\begin{equation}
\label{eq:imposs4}
a \ge 1, \ b \le n - 1
.
\end{equation}
Indeed, $\m$ cannot have common points with the segments $[\mathbf 0, (n, 0)]$ 
and $[(n, 0), (n, n)]$, for otherwise it would contain an $\nz$-point.  These 
segments are the sides of the square that clearly has common points with~$\m$.  
Then the segment $[\mathbf a, \mathbf b]$ must lie in the slab $0 < x_2 < n$, 
whence~\eqref{eq:imposs4}.

By Lemma~\ref{lem:th2-1} applied to the polygon $\m - \mathbf a$, we have
\begin{gather}
v_1 \ge c - b + a - 2,
\label{eq:imposs3}
\\
w_1 \le c + b - a + 2.
\label{eq:imposs5}
\end{gather}

It follows from~\eqref{eq:imposs3}, \eqref{eq:imposs1}, and~\eqref{eq:imposs4} 
that the the coordinates of $\mathbf a$ satisfy $a + c \le n$, i.~e.~$\mathbf 
a$ cannot lie above the line $x_1 + x_2 = n$.  In the same way, it follows 
from~\eqref{eq:imposs5}, \eqref{eq:imposs2}, and~\eqref{eq:imposs4} that 
$\mathbf b$ cannot lie below this line.  As a consequence, $\mathbf v$ must 
lie below this line, for else~$P$ would have a common point with the segment 
$[(0,n), (n, n)]$; likewise, $\mathbf w$ must lie above this line.

Let $\mathbf p = (c, p)$, where $p = v_1 + v_2 - c$, be the projection of 
$\mathbf v$ on the line $x_1 = c$ along the vector $\mathbf e_1 - \mathbf 
e_2$.  As~$\mathbf v$ lies below the line $x_1 + x_2 = n$, so does~$\mathbf 
p$, and therefore $\mathbf p$ lies below $\mathbf b$.  Moreover, $\mathbf p$ 
cannot lie below $\mathbf a$, since using~\eqref{eq:imposs1} 
and~\eqref{eq:imposs3} and then~\eqref{eq:imposs4}, we have
\begin{equation*}
p = v_1 + v_2 - c \ge a - b + n - 1 \ge a
.
\end{equation*}
Thus, $\mathbf p$ lies on the segment $[\mathbf a, \mathbf b]$.

The points $\mathbf a$, $\mathbf b$, and $\mathbf w$ are the vertices of a 
triangle $T$.  The line $l$ passing through $\mathbf v$ and $\mathbf p$ 
intersects the side $[\mathbf a, \mathbf b]$ of~$T$, so it intersects another 
side as well.  The line $l$ is parallel to the line $x_1 + x_2 = n$ and lies 
below it; on the contrary, neither~$\mathbf b$ nor~$\mathbf w$ lie below the 
latter line, so~$l$ has no common points with the segment $[\mathbf b, \mathbf 
w]$.  Consequently, $l$ intersects the side $[\mathbf a, \mathbf w]$ of~$T$ at 
a possibly non-integer point $\mathbf z = (z_1, z_2)$.  The segment $[\mathbf 
p, \mathbf z]$ is contained in~$T$, so the whole segment $[\mathbf v, \mathbf 
z]$ is contained in~$\m$.

The slope of the line passing through~$\mathbf a$ and~$\mathbf w$ is negative 
(this can be seen by e.~g.\ comparing the coordinates of those points 
using~\eqref{eq:imposs2}), so $z_2 \le a$.  Using~\eqref{eq:imposs1} 
and~\eqref{eq:imposs4}, we can estimage the number of integer points lying on 
the segment $[\mathbf v, \mathbf z]$ as follows:
\begin{equation*}
|[\mathbf v, \mathbf z] \cap \zz| =
\floor{v_2 - z_2} + 1 \ge n - a + 2 \ge b - a + 3 = \ell(P) + 3
,
\end{equation*}
which contradicts the definition of the lattice diameter.  The contradiction 
proves the lemma.
\end{proof}

\section{Properties of slopes}
\label{sec:pfs}

In this section we prove Theorems~\ref{th3-6} and~\ref{th3-8}.
\subsection{Preliminaries}
\label{sec:pfs:prelim}

Throughout this section $(\mathbf o; \mathbf f_1, \mathbf f_2)$ is an integer 
frame splitting an integer slope $Q$.  Let $\mathbf v_0 = \mathbf v$, $\mathbf 
v_1$, \dots, $\mathbf v_N = \mathbf w$ be the vertices of $Q$.  We define 
$\mathbf a_i$ by~\eqref{eq1-2-3} and assume that \eqref{eq3-3} 
and~\eqref{eq4-3} hold.  By $\varepsilon_i = [\mathbf v_{i-1}, \mathbf v_i]$ 
($i = 1, \dots, N$) denote the edges of $Q$ and by $E$, the set of edges.  Set
\begin{gather*}
\mathbf v_i - \mathbf o = v_{i1} \mathbf f_1 + v_{i2} \mathbf f_2
\qquad(i=0,\dots,N)
.
\end{gather*}
Note that $v_{ij}$ and $a_{ij}$ are integers.

Further, set
\begin{gather*}
k=\min\{i\colon v_{i2} < 0\},
\quad
\alpha=-\frac{a_{k1}}{a_{k2}},
\quad
t= \lceil \alpha \rceil - 1,
\\
S=\{\varepsilon_i\in E\colon i<k,\ a_{i2}=-1\},
\quad
s=|S|.
\end{gather*}
All these are well-defined.

\begin{remark}
\label{rem:ssa0}
It is easily seen that $(\mathbf o; \mathbf f_1, \mathbf f_2)$ forms small 
angle with~$Q$ if and only if $\alpha \ge 1$.
\end{remark}

\begin{remark}
\label{rem:ssa}
Let us define $\tilde \alpha = - a_{\tilde k 2}/a_{\tilde k 1}$, where $\tilde 
k = \min\{i \colon v_{k1} \ge 0\}$.  The coefficient~$\tilde \alpha$ is 
related to the frame $(\mathbf o; \mathbf f_2, \mathbf f_1)$ in the same way 
as~$\alpha$ is to $(\mathbf o; \mathbf f_1, \mathbf f_2)$.  The statement of 
Proposition~\ref{pr:sa} saying that at least one of those frames forms small 
angle with~$Q$ can be equivalently expressed in the form of the inequality
\begin{equation*}
\min\{\alpha, \tilde \alpha\} \ge 1
.
\end{equation*}
Moreover, it is not hard to prove that equality holds if and only if $\alpha = 
\tilde \alpha = 1$, in which case $k = \tilde k$, and consequently $v_{k-1,1} 
< 0$ and $v_{k-1,2} > 0$.
\end{remark}

\begin{lemma}
\label{lem:pfs-st}
The cardinality $s$ of $S$ satisfies
\begin{equation}
\label{eq:pfs-st-1}
s \le t
,
\end{equation}
and, moreover,
\begin{equation}
\label{eq:pfs-st-2}
\sum_{\varepsilon_i \in S} a_{i1} \le (t-s)s + \frac{s(s+1)}{2}
.
\end{equation}
If the vertices of $Q$ belong to a proper sublattice of~$\zz$, then $s = t$ 
only if both equal $0$ or $1$, and in the latter case the only edge $\eps_i 
\in S$ has the associated vector $\mathbf a_i = \mathbf f_1 - \mathbf f_2$.
\end{lemma}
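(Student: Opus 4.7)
The plan is to exploit the slope monotonicity condition~\eqref{eq4-3} on consecutive edges together with the constraint $a_{i,2}=-1$ defining $S$, and then finish the sublattice assertion with a short lattice-basis argument.

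First I would establish two numerical facts about the edges of~$S$. Iterating~\eqref{eq4-3} shows that the sequence $-a_{i,2}/a_{i,1}$ is strictly decreasing in $i$. Applying this to any two edges $\varepsilon_{i_1},\varepsilon_{i_2}\in S$ with $i_1<i_2$ (for which both second coordinates equal $-1$) reads $1/a_{i_1,1}>1/a_{i_2,1}$, so the first coordinates $a_{i,1}$ form a strictly increasing sequence along $S$. Similarly, transitivity across $i<k$ gives $1/a_{i,1}>-a_{k,2}/a_{k,1}=1/\alpha$ for every $\varepsilon_i\in S$, hence $a_{i,1}<\alpha$; since $a_{i,1}$ is a positive integer, this yields $a_{i,1}\le\lceil\alpha\rceil-1=t$ regardless of whether $\alpha$ is itself an integer.

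Second, these two facts imply that $\{a_{i,1}:\varepsilon_i\in S\}$ is a set of $s$ distinct elements of $\{1,\dots,t\}$. Pigeonhole immediately gives~\eqref{eq:pfs-st-1}. The sum $\sum_{\varepsilon_i\in S}a_{i,1}$ is maximised when these are the $s$ largest allowed values $t-s+1,\dots,t$; summing yields $\sum_{j=1}^{s}(t-s+j)=(t-s)s+s(s+1)/2$, which is~\eqref{eq:pfs-st-2}.

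Third, for the sublattice assertion, suppose $s=t$ and let $\Gamma\subsetneq\zz$ be a sublattice containing the vertices of~$Q$. By the first paragraph the multiset $\{a_{i,1}:\varepsilon_i\in S\}$ must then exhaust $\{1,2,\dots,t\}$, so the edge vectors $\mathbf f_1-\mathbf f_2,\,2\mathbf f_1-\mathbf f_2,\dots,t\mathbf f_1-\mathbf f_2$ all lie in $\Gamma$, as they are differences of vertices of~$Q$. If $t\ge 2$, subtracting two of them produces $\mathbf f_1\in\Gamma$, and then $\mathbf f_2=\mathbf f_1-(\mathbf f_1-\mathbf f_2)\in\Gamma$; but $(\mathbf f_1,\mathbf f_2)$ is a basis of~$\zz$, so $\Gamma=\zz$, a contradiction. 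Hence $s=t\in\{0,1\}$, and in the case $s=t=1$ the unique edge of~$S$ has associated vector $\mathbf f_1-\mathbf f_2$.

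I do not expect a serious obstacle here; the whole lemma reduces to monotonicity bookkeeping. The only fussy points are verifying $a_{i,1}\le t$ in the boundary case $\alpha\in\z$ (handled above by the ceiling identity), and noting that differences of vertices of~$Q$ do lie in~$\Gamma$, which is what lets the last paragraph force $\mathbf f_1,\mathbf f_2\in\Gamma$ and collapse~$\Gamma$ to~$\zz$.
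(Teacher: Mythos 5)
Your proof is correct, and for inequalities \eqref{eq:pfs-st-1} and \eqref{eq:pfs-st-2} it follows essentially the same route as the paper: the determinant condition \eqref{eq4-3} forces the ratios $a_{i1}/(-a_{i2})$ to increase strictly up to $\alpha$, so the numbers $a_{i1}$ for $\eps_i\in S$ are $s$ distinct positive integers bounded by $t=\lceil\alpha\rceil-1$, and the sum bound is the sum of the $s$ largest admissible values. Your handling of the boundary case $\alpha\in\z$ via the ceiling identity is exactly the needed observation. For the final assertion you take a slightly different (and arguably more elementary) finish: where the paper invokes the small/large step machinery and Proposition~\ref{pr:steps} to show that the $a_{i_p1}$ must differ by multiples of some $m\ge2$, you note directly that $s=t\ge2$ would put both $\mathbf f_1-\mathbf f_2$ and $2\mathbf f_1-\mathbf f_2$ into $\Gamma$, hence $\mathbf f_1,\mathbf f_2\in\Gamma$ and $\Gamma=\zz$, contradicting properness; both arguments are valid and of comparable length.
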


\begin{proof}
Let $S = \{\eps_{i_1}, \dots, \eps_{i_s}\}$, where $i_1 < \dots < i_s < k$.  
It follows from~\eqref{eq3-3} and~\eqref{eq4-3} that
\begin{equation*}
\frac{a_{11}}{-a_{12}}
<
\frac{a_{21}}{-a_{22}}
<
\dots
<
\frac{a_{k1}}{-a_{k2}} = \alpha
.
\end{equation*}
Hence, as $a_{i_p2} = -1$, we see that
\begin{equation}
\label{eq:pfs-st-10}
0< a_{i_11} < a_{i_21} < \dots < a_{i_s1} \le \lceil \alpha \rceil - 1 = t
.
\end{equation}
This implies~\eqref{eq:pfs-st-1}.  Moreover,~\eqref{eq:pfs-st-10} implies that 
$a_{i_p1} \le t - s + p$, where $p = 1, \dots, s$, and upon summation, we 
recover~\eqref{eq:pfs-st-2}.  Now suppose that $s = t$, and the vertices of 
$Q$ belong to a proper sublattice $\Gamma$ of $\zz$.  Let us show that either 
$s = 0$ or $s = 1$.  If $s \ne 0$, then $S \ne \varnothing$, and as the 
vectors $\mathbf a_i$ belong to $\Gamma$, we see that the small $\mathbf 
f_2$-step of $\Gamma$ is 1.  By Proposition~\ref{pr:steps}, $\Gamma$ has large 
$\mathbf f_1$-step $m \ge 2$.  The differences $\mathbf a_{i_p} - \mathbf 
a_{i_q} \in \Gamma$ are proportional to $\mathbf f_1$, so the numbers 
$a_{i_p}$, where $p = 1, \dots, s$, differ by multiples of $m$.  Thus, in view 
of~\eqref{eq:pfs-st-10}, we can only have $s = t$ if $s = t = 1$, as claimed.  
In this case $a_{i_11} = 1$, so that $\mathbf a_{i_1} = \mathbf f_1 - \mathbf 
f_2$.
\end{proof}

Given an edge $\varepsilon_i \in E$, define
\begin{gather*}
\pi_1(\varepsilon_i)=v_{i1}^+ - v_{i-1,1}^+, \\
\pi_2(\varepsilon_i)=v_{i-1,2}^+ - v_{i2}^+ , \\
\hat{\pi}(\varepsilon_i)=\pi_1(\varepsilon_i)+\pi_2(\varepsilon_i)-2
\end{gather*}
and extend the functions $\pi_1$, $\pi_2$, and $\hat \pi$ to the set of all 
subsets of~$E$ by additivity.  Observe that these functions take integer 
values.
\begin{remark}
\label{rem:projections}
Obviously, for any $F \subset E$ we have
\begin{gather}
\pi_j(F)\ge 0\qquad (j=1,2),
\label{eq:pi-1}
\\
\hat{\pi}(F)=\pi_1(F) + \pi_2(F)-2|F|
\label{eq:pi-3}
.
\end{gather}
Moreover, it is clear that if $\tilde Q$ is a subslope of $Q$ (not necessarily 
integer) with the endpoints $\tilde{\mathbf v} = \mathbf o + \tilde v_1 
\mathbf f_1 + \tilde v_2 \mathbf f_2$ and $\tilde{\mathbf w} = \mathbf o + 
\tilde w_1 \mathbf f_1 + \tilde w_2 \mathbf f_2$, then
\begin{equation*}
\pi_1(\tilde Q) = | \tilde v_1^+ - \tilde w_1^+ |,
\qquad
\pi_2(\tilde Q) = | \tilde v_2^+ - \tilde w_2^+ |
.
\end{equation*}
\end{remark}

Set
\begin{equation*}
E_1=\{\varepsilon_1, \dots, \varepsilon_k\},
\qquad
E_2=\{\varepsilon_{k+1}, \dots, \varepsilon_N\}
.
\end{equation*}
Clearly, $E_1 \cap E_2 = \varnothing$ and $E_1 \cup E_2 = E$.

\subsection{Auxiliary statements}

\begin{lemma}
\label{lem4-5}
We have
\begin{equation}
\label{eq1-lem4-5}
\hat{\pi}(E_1)
\ge
(v_{k-1, 1}^+ + v_{k-1,2} - 1)
+ \delta
+ (t-s)
+ \floor{(-v_{k2}-1)\alpha}
,
\end{equation}
where
\begin{equation}
\label{eq100-lem4-5}
\delta =
\begin{cases}
1, & \text{if } v_{k-1,2} > 0 \text{ and } \alpha \in \z, \\
0, & \text{otherwise.}
\end{cases}
\end{equation}
\end{lemma}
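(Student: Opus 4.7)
The plan is to reduce the target inequality to an arithmetic statement involving $\lceil \alpha(v_{k-1,2}+1)\rceil$ by telescoping, and then verify it through case analysis on the signs of $v_{k-1,1}$ and $v_{k,1}$.

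I would begin by computing $\hat{\pi}(E_1)$. Since $v_{0,1} = v_1 < 0$ and $v_{k,2} < 0$ (by the splitting hypothesis and the definition of $k$, respectively), the formulas in Remark~\ref{rem:projections} applied to the subslope with endpoints $\mathbf{v}_0$ and $\mathbf{v}_k$ give $\pi_1(E_1) = v_{k,1}^+$ and $\pi_2(E_1) = v_2$, hence
\begin{equation*}
\hat{\pi}(E_1) = v_{k,1}^+ + v_2 - 2k.
\end{equation*}
Using $a_{k,1} = \alpha|a_{k,2}|$ and $-v_{k,2} - 1 = |a_{k,2}| - v_{k-1,2} - 1$, a direct computation yields $\floor{(-v_{k,2}-1)\alpha} = a_{k,1} - \lceil\alpha(v_{k-1,2}+1)\rceil$. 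Substituting these identities, invoking the algebraic fact $v_{k,1}^+ - v_{k-1,1}^+ - a_{k,1} = v_{k,1}^- - v_{k-1,1}^-$ (where $v^- = \max(-v, 0)$), and applying the bound $\sum_{i<k}|a_{i,2}| \ge s + 2(k-1-s) = 2(k-1) - s$ (edges in $S$ contribute exactly~$1$, the rest at least~$2$), the claim reduces to
\begin{equation*}
\lceil\alpha(v_{k-1,2}+1)\rceil + v_{k,1}^- \ge 1 + \delta + t + v_{k-1,1}^-.
\end{equation*}

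Next I would split into three cases on the signs of $v_{k-1,1}$ and $v_{k,1}$: (A)~$v_{k-1,1} \ge 0$; (B)~$v_{k-1,1} < 0 \le v_{k,1}$; (C)~$v_{k,1} < 0$. Case~(C) contradicts the splitting hypothesis: the edges $\eps_i$ with $i \le k$ would then all lie in $x_1 \le 0$ (every vertex satisfies $v_{i,1} < 0$), while the edges $\eps_j$ with $j > k$ would lie in $x_2 \le 0$ (since $v_{j,2} \le v_{k,2} < 0$), leaving no point of $Q$ with both coordinates positive. In Case~(A), $v_{k-1,1}^- = v_{k,1}^- = 0$ and the reduced inequality becomes $\lceil\alpha(v_{k-1,2}+1)\rceil \ge 1 + \delta + t$, which I would verify separately for $v_{k-1,2} = 0$ (where $\delta = 0$ and $\lceil\alpha\rceil = t+1$) and $v_{k-1,2} \ge 1$ (where $\alpha(v_{k-1,2}+1) \ge 2\alpha$ easily accommodates $\delta \in \{0, 1\}$). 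In Case~(B), $v_{k,1}^- = 0$ and $v_{k-1,1}^- \ge 1$; moreover, all edges except $\eps_k$ lie either in $x_1 \le 0$ or in $x_2 \le 0$, so the splitting hypothesis forces the extension of $\eps_k$ to meet the positive $x_1$-axis, that is, $\alpha v_{k-1,2} > v_{k-1,1}^-$ (which also forces $v_{k-1,2} \ge 1$).

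The main technical hurdle is the ceiling arithmetic in Case~(B). When $\alpha \in \z$ (so $\delta = 1$ and $t = \alpha - 1$), the strict integer inequality $\alpha v_{k-1,2} > v_{k-1,1}^-$ upgrades to $\alpha v_{k-1,2} \ge v_{k-1,1}^- + 1$, matching the reduced inequality exactly after rearrangement. When $\alpha \notin \z$ (so $\delta = 0$ and $t = \lceil\alpha\rceil - 1$), the chain
\begin{equation*}
\alpha(v_{k-1,2}+1) = \alpha v_{k-1,2} + \alpha > v_{k-1,1}^- + \alpha > v_{k-1,1}^- + \lceil\alpha\rceil - 1,
\end{equation*}
which combines the splitting inequality with the bound $\alpha > \lceil\alpha\rceil - 1$ (valid for $\alpha \notin \z$), yields $\lceil\alpha(v_{k-1,2}+1)\rceil \ge v_{k-1,1}^- + \lceil\alpha\rceil = 1 + \delta + t + v_{k-1,1}^-$, closing the case.
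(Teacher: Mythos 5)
Your proof is correct and is essentially the paper's argument in a different packaging: after the substitutions $\pi_1(\eps_k)=v_{k1}^{+}-v_{k-1,1}^{+}=a_{k1}+v_{k1}^{-}-v_{k-1,1}^{-}$ and $\floor{(-v_{k2}-1)\alpha}=a_{k1}-\ceil{\alpha(v_{k-1,2}+1)}$, your reduced inequality $\ceil{\alpha(v_{k-1,2}+1)}+v_{k1}^{-}\ge 1+\delta+t+v_{k-1,1}^{-}$ is identical to the paper's key estimate $\pi_1(\eps_k)\ge\delta+1+t+\floor{(-v_{k2}-1)\alpha}$, and your splitting-derived bound $\alpha v_{k-1,2}>v_{k-1,1}^{-}$ is exactly the paper's observation that the point $\mathbf z$ where $Q$ crosses the $\mathbf f_1$-axis has positive first coordinate. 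Both arguments also rest on the same count $\sum_{i<k}|a_{i2}|\ge 2(k-1)-s$ for the edges preceding $\eps_k$, so the only real difference is that you telescope globally and case-split on the signs of $v_{k-1,1}$ and $v_{k1}$ rather than on the position of $\mathbf z$ within $\eps_k$.
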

\begin{proof}
Let us show the inequality
\begin{equation}
\label{eq:pfs-101}
\pi_1(\varepsilon_k) \ge \delta + 1 + t + \lfloor (-v_{k2} - 1) \alpha \rfloor
.
\end{equation}

Since the frame $(\mathbf o; \mathbf f_1, \mathbf f_2)$ splits the slope~$Q$, 
it follows from the definition that the ray $\{\mathbf o + \lambda \mathbf f_1 
\colon \lambda \ge 0\}$ meets~$Q$ at a point $\mathbf z = \mathbf o + z_1 
\mathbf f_1$, where $z_1 > 0$.  As $v_{k2} < 0 \le v_{k-1, 2}$, it is easily 
seen that $\mathbf z$ belongs to the edge $\varepsilon_k$ and either coincides 
with~$\mathbf v_{k-1}$ or is an inner point of the edge.  We consider these 
cases separately.

If $\mathbf z = \mathbf v_{k-1}$, then $v_{k-1, 2} = 0$ and $v_{k-1, 1} > 0$, 
so
\begin{equation*}
\pi_1(\varepsilon_k)
= v_{k1} - v_{k-1,1}
= a_{k1} = \alpha a_{k2} = \alpha(-v_{k2})
= \lceil \alpha \rceil + \lfloor (- v_{k2} - 1) \alpha \rfloor
,
\end{equation*}
and as in this case $\delta = 0$, \eqref{eq:pfs-101} follows.

Assume that $\mathbf z$ is an interior point of $\varepsilon_k$, then $v_{k-1, 
2} > 0$.  As in this case $\mathbf z$ is not the leftmost point of 
$\varepsilon_k$, it is clear that $\pi_1(\varepsilon_k)$ is strictly greater 
than $v_{k1} - z_1 = \alpha(-v_{k2})$.  Thus,
\begin{equation*}
\pi_1(\varepsilon_k) \ge \lfloor \alpha (-v_{k2}) \rfloor + 1
\ge \lfloor \alpha \rfloor + \lfloor (-v_{k2} - 1) \alpha \rfloor + 1
= \lceil \alpha \rceil + \delta + \lfloor (-v_{k2} - 1) \alpha \rfloor
,
\end{equation*}
and \eqref{eq:pfs-101} follows.  The inequality is proved.

Since $v_{k-1, 2} \ge 0$ and $v_{k2} < 0$, we have $\pi_2(\varepsilon_k) = 
v_{k-1, 2}$.  This and~\eqref{eq:pfs-101} implies
\begin{equation}
\label{eq:pfs-102}
\hat \pi(\varepsilon_k) \ge v_{k-1,2} + \delta -1 + t + \lfloor (-v_{k2} - 1) 
\alpha \rfloor
.
\end{equation}

Let $\tilde Q$ be the subslope of $Q$ with the vertices $\mathbf v_0 = \mathbf 
v$, $\mathbf v_1$, \dots, $\mathbf v_{k-1}$ and $\tilde E$ be the set of its 
edges, then $\varepsilon_k \notin \tilde E$, and $\tilde E \cup 
\{\varepsilon_k\} = E_1$.  As $\tilde Q$ lies in the upper half-plane, for any 
$\varepsilon_i \in \tilde E$ we have $\pi_2(\varepsilon_i) = v_{i-1, 2} - 
v_{i2} = - a_{i2}$.  Consequently, $\pi_2(\varepsilon_i) = 1$ if 
$\varepsilon_i \in S$ and $\pi_2(\varepsilon_i) \ge 2$ otherwise, whence
\begin{equation}
\label{eq:pfs-103}
\pi_2(\tilde E) \ge 2|\tilde E| - s
.
\end{equation}
Further, by~Remark~\ref{rem:projections}, we have $\pi_1(\tilde E) = v_{k-1, 
1}^+ - v_{k1}^+ = v_{k-1, 1}^+$.  Combining this with~\eqref{eq:pfs-103} and 
using~\eqref{eq:pi-3}, we obtain
\begin{equation}
\label{eq:pfs-104}
\hat \pi(\tilde E) \ge v_{k-1, 1}^+ - s
.
\end{equation}

As $\hat \pi(E_1) = \hat \pi(\varepsilon_k) + \hat \pi(\tilde E)$, we 
sum~\eqref{eq:pfs-102} and~\eqref{eq:pfs-104} and obtain~\eqref{eq1-lem4-5}.
\end{proof}

\begin{lemma}
\label{lem:pfs-1}
Suppose that $(\mathbf o; \mathbf f_1, \mathbf f_2)$ forms small angle with 
$Q$; then
\begin{equation}
\label{eq:pfs-200}
\hat{\pi}(E_2)\ge
\frac 12(v_{k2} - w_{2} - 1)
.
\end{equation}
\end{lemma}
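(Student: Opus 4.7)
The plan is to evaluate $\hat\pi(E_2)$ edge-by-edge, use the small-angle hypothesis to obtain a strict slope bound, and finish with a short integer case analysis.

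First I would compute $\pi_1(\varepsilon_i)$ and $\pi_2(\varepsilon_i)$ for each $\varepsilon_i \in E_2$. By the definition of $k$ and because $v_{i2}$ is strictly decreasing along the slope (all $a_{i2}<0$), both endpoints of every $\varepsilon_i \in E_2$ satisfy $v_{i-1,2},v_{i,2}<0$, so $\pi_2(\varepsilon_i)=0$. The splitting ray meets $\varepsilon_k$ at a point $\mathbf z = \mathbf o + z_1\mathbf f_1$ with $z_1>0$; writing $\mathbf z = \mathbf v_{k-1}+s\mathbf a_k$ for some $s\in[0,1]$ and using $a_{k1}>0$ gives $v_{k1}=z_1+(1-s)a_{k1}>0$, and then $v_{i1}>0$ for every $i\ge k$ because each $a_{j1}>0$. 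Therefore $\pi_1(\varepsilon_i)=a_{i1}$, $\hat\pi(\varepsilon_i)=a_{i1}-2$, and
\[
\hat\pi(E_2) = \sum_{i>k}(a_{i1}-2), \qquad v_{k2}-w_2 = \sum_{i>k}(-a_{i2}).
\]

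Next I would invoke the small-angle hypothesis. By Remark~\ref{rem:ssa0} it is equivalent to $\alpha\ge 1$, i.e.\ $-a_{k2}/a_{k1}\le 1$, and the convexity condition~\eqref{eq4-3} shows that $-a_{i2}/a_{i1}$ is strictly decreasing, so $-a_{i2}/a_{i1}<1$ for every $i>k$. Integrality then forces $a_{i1}\ge -a_{i2}+1$, and in particular $a_{i1}\ge 2$. Setting $p_i=a_{i1}$, $q_i=-a_{i2}$, and $m=N-k$, the target inequality~\eqref{eq:pfs-200} becomes
\[
\sum_{i>k}(2p_i-q_i-4)\ge -1.
\]

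The remaining step, which is the crux, is an integer case analysis. Among positive integer pairs $(p,q)$ with $q<p$, the quantity $\phi(p,q)=2p-q-4$ is negative only for $(p,q)=(2,1)$, where $\phi(2,1)=-1$; indeed, $\phi(p,q)<0$ forces $2p<q+4\le p+3$, so $p\le 2$ and hence $(p,q)=(2,1)$. Since the slopes $q_i/p_i$ are strictly decreasing, no slope value is repeated, so the pair $(2,1)$ occurs at most once in the sum. Therefore at most one summand equals $-1$ while every other summand is nonnegative, which establishes the bound. The main obstacle is recognising this combinatorial formulation and the role of strict monotonicity in limiting the contribution of the sole \emph{bad} pair $(2,1)$; the earlier steps amount to careful bookkeeping of which endpoints contribute to $\pi_1$ and $\pi_2$.
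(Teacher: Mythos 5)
Your proposal is correct and follows essentially the same route as the paper: identify $\pi_2(\varepsilon_i)=0$ and $\pi_1(\varepsilon_i)=a_{i1}$ on $E_2$, use the small-angle hypothesis and the monotonicity of the ratios from~\eqref{eq4-3} to get $a_{i1}>-a_{i2}$, observe that $2a_{i1}+a_{i2}-4$ is negative only for the vector $2\mathbf f_1-\mathbf f_2$ (where it equals $-1$), and use the distinctness of the edge vectors to conclude that this loss occurs at most once. The paper packages the per-edge bound as $\hat\pi(\varepsilon_i)\ge\tfrac12(-a_{i2}+g(a_{i1},a_{i2}))$ with $g(m_1,m_2)=2m_1+m_2-4$, which is exactly your $\phi$ after clearing the factor of $2$.
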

\begin{proof}
If $E_2 = \varnothing$, we have $\mathbf v_k = \mathbf w$ and 
\eqref{eq:pfs-200} is obvious.

Assume that $E_2\neq\varnothing$ and take $\varepsilon_i\in E_2$.  It is easy 
to see that all the edges belonging to $E_2$ lie in the right half-plane, so 
$\pi_1(\eps_i)=v_{i-1,1}-v_{i1}=a_{i1}$, and since $\pi_2(\eps_i) \ge 0$, we 
obtain
$$\hat{\pi}(\eps_i)\ge a_{i1}-2$$
(actually, the equality holds here).  Write the last inequality in form
\begin{equation}
\label{eq:lem4-7a}
\hat{\pi}(\eps_i)\ge \frac{-a_{i2}+g(a_{i1},a_{i2})}{2},
\end{equation}
where
\begin{equation*}
g(m_1,m_2)= 2m_1 + m_2 - 4
.
\end{equation*}
Using~\eqref{eq4-3}, we see that
$$
\alpha = \frac{a_{k1}}{-a_{k2}}
< \frac{a_{k+1,1}}{-a_{k+1,2}}
< \dots
< \frac{a_{N1}}{-a_{N2}}
;
$$
besides, as $(\mathbf o; \mathbf f_1, \mathbf f_2)$ forms small angle 
with~$Q$, we have $\alpha \ge 1$ (Remark~\ref{rem:ssa0}; consequently,
$$a_{i1} + a_{i2} > 0.$$
Now it is not hard to check that $g$ takes nonnegative values in all the 
points of the set
\begin{equation*}
\{(m_1,m_2)\in\zz \colon m_1>0, m_2<0,m_1+m_2 > 0\}
\end{equation*}
except $(2,-1)$, and $g(2,-1)=-1$.  Consequently,~\eqref{eq:lem4-7a} gives
$$\hat{\pi}(\eps_i)\ge\frac{-a_{i2}-\delta_i}{2},$$
where
\begin{equation*}
\delta_i
=
\begin{cases}
1,&\text{if } \mathbf{a}_i=2\mathbf{f}_1-\mathbf{f}_2,\\
0, & \text{otherwise}.
\end{cases}
\end{equation*}
The vectors $\mathbf a_i$ are distinct, so at most one $\delta_i$ is nonzero.  
Thus, we have:
\begin{equation*}
\hat{\pi}(E_1)
= \sum\limits_{i=k+1}^{N}\hat{\pi}(\eps_i)
\ge \frac12\left(
-\sum\limits_{i=k+1}^{N}a_{i2}
-\sum\limits_{i=k+1}^{N}\delta_i\right)
\ge\frac12(v_{k-1,\,2}-v_{02}-1)
,
\end{equation*}
and~\eqref{eq:pfs-200} is proved.
\end{proof}

\begin{lemma}\label{lem4-7'}
Suppose that $Q$ is a $\Gamma$-slope, where $\Gamma$ is a proper sublattice 
of~$\zz$; then
\begin{equation}
\label{eq1-lem4-7'}
\hat{\pi}(E)\ge1.
\end{equation}
\end{lemma}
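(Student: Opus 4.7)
The plan is to bound $\hat\pi(E)=\hat\pi(E_1)+\hat\pi(E_2)$ by combining Lemmas~\ref{lem4-5} and~\ref{lem:pfs-1}, and then to extract the extra unit from the properness of $\Gamma$ via Lemma~\ref{lem:pfs-st}. Since $\hat\pi$ is symmetric under the swap $\mathbf f_1\leftrightarrow\mathbf f_2$ (it is the sum $\pi_1+\pi_2-2|E|$), Proposition~\ref{pr:sa} lets me assume that $(\mathbf o;\mathbf f_1,\mathbf f_2)$ forms small angle with $Q$, i.e.\ $\alpha\ge 1$. If in addition $\alpha=1$ and $v_{k-1,2}=0$, Remark~\ref{rem:ssa} forces $\tilde\alpha>1$, so after one further swap I may arrange that either $\alpha>1$, or $\alpha=1$ together with $v_{k-1,2}>0$.

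Under these reductions every summand on the right-hand side of~\eqref{eq1-lem4-5} is nonnegative: $\alpha\ge 1$ yields $\lfloor(-v_{k2}-1)\alpha\rfloor\ge -v_{k2}-1\ge 0$, and the splitting hypothesis rules out $v_{k-1,1}\le 0$ together with $v_{k-1,2}=0$ (otherwise the whole slope would lie outside the open first quadrant, since all subsequent vertices have strictly negative second coordinate and all earlier ones have first coordinate $\le v_{k-1,1}\le 0$), so $v_{k-1,1}^{+}+v_{k-1,2}\ge 1$. Lemma~\ref{lem:pfs-1} gives $\hat\pi(E_2)\ge (v_{k2}-w_2-1)/2\ge -1/2$, and integrality upgrades this to $\hat\pi(E_2)\ge 0$. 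Hence $\hat\pi(E)\ge 0$ unconditionally, and the task reduces to gaining one extra unit.

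Lemma~\ref{lem:pfs-st} then organises the remaining analysis into three cases. If $s<t$, the term $t-s$ in~\eqref{eq1-lem4-5} already contributes at least $1$, and we are done. If $s=t=0$, the bound $\alpha\le 1$ (from $t=\lceil\alpha\rceil-1=0$) combined with the reduction forces $\alpha=1$ and $v_{k-1,2}>0$, so $\delta=1$. The main obstacle is the case $s=t=1$: here $\alpha\in(1,2]$ and the distinguished edge satisfies $\mathbf a_{i_1}=\mathbf f_1-\mathbf f_2\in\Gamma$. I would handle it by subdividing according to $v_{k-1,2}$ and $v_{k2}$; the missing unit is recovered directly from the geometric bound whenever $v_{k-1,2}\ge 2$ or $v_{k2}\le -2$ or $\alpha=2$ (so $\delta=1$). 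In each remaining `flat' sub-case a short computation forces $\mathbf a_k$ to equal $2\mathbf f_1-\mathbf f_2$ or $3\mathbf f_1-2\mathbf f_2$; together with $\mathbf f_1-\mathbf f_2\in\Gamma$ this yields $\mathbf f_1,\mathbf f_2\in\Gamma$ and contradicts the properness of $\Gamma$, closing the argument.
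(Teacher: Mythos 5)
Your proposal is correct and follows essentially the same route as the paper: the same small-angle normalisation via Proposition~\ref{pr:sa} and Remark~\ref{rem:ssa}, the same decomposition $\hat\pi(E)=\hat\pi(E_1)+\hat\pi(E_2)$ bounded through Lemmas~\ref{lem4-5} and~\ref{lem:pfs-1}, the same appeal to Lemma~\ref{lem:pfs-st} to reduce to $s=t\in\{0,1\}$, and the same terminal contradiction from $\mathbf a_k\in\{2\mathbf f_1-\mathbf f_2,\,3\mathbf f_1-2\mathbf f_2\}$ forming a unimodular pair with $\mathbf f_1-\mathbf f_2\in\Gamma$. The only slip is the parenthetical ``$\alpha=2$ (so $\delta=1$)'': when $v_{k-1,2}=0$ one has $\delta=0$ even for integral $\alpha$, but that configuration is precisely your flat sub-case yielding $\mathbf a_k=2\mathbf f_1-\mathbf f_2$, so the argument still closes.
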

\begin{proof}
In view of Remark~\ref{rem:ssa}, there is no loss of generality in assuming 
that $(\mathbf o; \mathbf f_1, \mathbf f_2)$ forms small angle with $Q$, and 
moreover, that $v_{k-1, 2} > 0$ if $\alpha = 1$.

Suppose, contrary to our claim, that $\hat{\pi}(E)\le0$.  As $E = E_1 \cup 
E_2$ and $E_1 \cap E_2 = \varnothing$, we have $\hat \pi(E) = \hat \pi(E_1) + 
\hat \pi(E_2)$.  It follows from Lemma~\ref{lem:pfs-1} that $\hat \pi(E_2) \ge 
0$, so we conclude that $\hat \pi(E_1) \le 0$.  Together with 
Lemma~\ref{lem4-5} this gives
\begin{equation}
\label{eq:pfs-201}
(v_{k-1, 1}^+ + v_{k-1,2} - 1)
+ \delta
+ (t-s)
+ \lfloor (-v_{k2}-1)\alpha \rfloor
\le 0
,
\end{equation}
where $\delta$ is defined by~\eqref{eq100-lem4-5}.  Observe that the summands 
on the left-hand side are nonnegative.  Indeed, the vertex $\mathbf v_{k-1}$ 
cannot simultaneously satisfy $v_{k-1, 1} \le 0$ and $v_{k-1,2} \le 0$ 
(Remark~\ref{rem:slpq}), so the first summand is nonnegative.  The second one 
is nonnegative by definition, the third one by Lemma~\ref{lem:pfs-st}, and the 
last one by the definition of~$k$.  Thus, \eqref{eq:pfs-201} can hold only if
\begin{gather}
v_{k-1, 1}^+ + v_{k-1,2} = 1
,
\label{eq:pfs-202}
\\
\delta = 0
,
\label{eq:pfs-203}
\\
t = s
,
\label{eq:pfs-204}
\\
v_{k2} = -1
\label{eq:pfs-205}
,
\end{gather}
where we recover~\eqref{eq:pfs-205} due to the fact that $\alpha \ge 1$.  
Also, \eqref{eq:pfs-203} implies that $\alpha > 1$, since we are assuming 
$v_{k-1,2} > 0$ if $\alpha = 1$.  Thus, we have $t \ge 1$, and by 
Lemma~\ref{lem:pfs-st} we conclude from~\eqref{eq:pfs-204} that
\begin{equation}
t = s = 1
\label{eq:pfs-206}
\end{equation}
and the set $S$ consists of a single edge $\eps_i$ having the associated 
vector $\mathbf a_i = \mathbf f_1 - \mathbf f_2$.  Thus, we necessarily have 
$\mathbf f_1 - \mathbf f_2 \in \Gamma$.

It follows from~\eqref{eq:pfs-202} that either $v_{k-1,2} = 0$ or $v_{k-1, 2} 
= 1$.

In the former case case we use~\eqref{eq:pfs-205} to get $a_{k2} = v_{k2} - 
v_{k-1,2} = -1$; moreover, $t = 1$ translates into $1 < \alpha \le 2$. Thus, 
$a_{k1} = - \alpha a_{k2} = \alpha$, and consequently, $a_{k1} = 2$.  We 
conclude that $\Gamma$ contains the vector $\mathbf a_k = 2\mathbf f_1 - 
\mathbf f_2$.  But then by Proposition~\ref{pr:th1-1} the vectors $\mathbf 
a_k$ and $\mathbf f_1 - \mathbf f_2$ form a basis of $\zz$,  which is 
impossible, since they belong to its proper sublattice.

In the latter case $v_{k-1, 2} = 1$ we again use~\eqref{eq:pfs-205} to get 
$a_{k2} = -2$.  Moreover, as $t = 1$ and $\alpha \notin \z$ by virtue 
of~\eqref{eq:pfs-203}, we have $1 < \alpha < 2$.  Consequently, the integer 
$a_{k1} = -\alpha a_{k2}$ belongs to the interval~$(2,4)$, i.~e.\ $-a_{k1}=3$.  
Thus, $\mathbf{a}_k=3\mathbf{f}_1-2\mathbf{f}_2$.  But in this case we see 
once again that the vectors~$\mathbf{a}_k$ and $\mathbf{f}_1-\mathbf{f}_2$ 
form a basis of $\zz$, which is impossible since they belong to its proper 
sublattice~$\Gamma$.
\end{proof}

\subsection{Proof of Theorems \ref{th3-6} and \ref{th3-8}}

\begin{proof}[Proof of Theorem~\ref{th3-6}]
\begin{figure}
\includegraphics{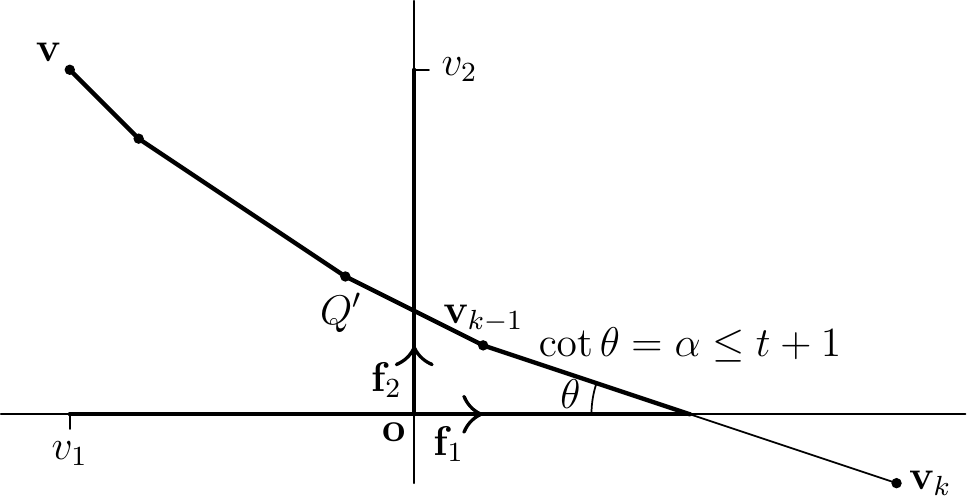}
\caption{The intersection of $Q$ with the upper half-space is a (possibly 
non-integer) slope $Q'$.  The length of the vertical projection of $Q'$ is 
$v_2$ and that of its horizontal projection is strictly greater than $-v_1$.  
All the edges belonging to $S$ are edges of $Q'$.  They all contribute $s$ to 
the length of the vertical projection of $Q'$, whence \eqref{eq:3-6B}.  Also, 
they all contribute at most $t + (t-1) + \dots + (t - s + 1) = ts - s(s-1)/2$ 
to the horizontal projection.  The horizontal contribution of any other edge 
is less than $t + 1$ times its vertical contribution, totalling at most 
$(t+1)(v_2 - s)$ for all the edges not in $S$, and~\eqref{eq:3-6C} follows.}
\label{fig:pfs}
\end{figure}

First assume that the frame $(\mathbf o; \mathbf f_1, \mathbf f_2)$ forms 
small angle with~$Q$, or, equivalently, $\alpha \ge 1$ 
(Remark~\ref{rem:ssa0}).  We show that in these case inequalities 
\eqref{eq:3-6A}--\eqref{eq:3-6E} hold with $t$ and $s$ defined in 
Section~\ref{sec:pfs:prelim}.

Inequalities \eqref{eq:3-6A}--\eqref{eq:3-6C} follow from 
Lemma~\ref{lem:pfs-st} and simple combinatorial arguments, see 
Figure~\ref{fig:pfs}.

Let us prove \eqref{eq:3-6D}.  Since the sets $E_1$ and $E_2$ are disjoint and 
their union is $E$, we have $\hat{\pi}(E)=\hat{\pi}(E_1)+\hat{\pi}(E_2)$.  
Evoking Lemmas~\ref{lem4-5} and~\ref{lem:pfs-1}, we obtain
\begin{multline}
\label{eq:lem4-7c}
\hat{\pi}(E) =
(v_{k-1, 1}^+ + v_{k-1,2} - 1)
+ \delta
+ (t-s)
\\
+ \floor{(-v_{k2}-1)\alpha}
+
\frac 12(v_{k2} - w_{2} - 1)
,
\end{multline}
where $\delta$ is defined by~\eqref{eq100-lem4-5}.  By definition, $\delta \ge 
0$.  The first term on the right-hand side of~\eqref{eq:lem4-7c} is 
nonnegative, since at least one coordinate of $\mathbf v_{k-1}$ must be 
positive (Remark~\ref{rem:slpq}.  Let us estimate the fourth term on the 
right-hand side of~\eqref{eq:lem4-7c}.  By the definition of~$k$ we have 
$v_{k2} < 0$, so $-v_{k2} - 1 \ge 0$, and by assumption, $\alpha \ge 1$; thus, 
we have:
\begin{equation*}
\lfloor (-v_{k2}-1)\alpha \rfloor
\ge
-v_{k2}-1
\ge
\frac 12(-v_{k2}-1)
.
\end{equation*}
Consequently, from~\eqref{eq:lem4-7c} we obtain
\begin{equation*}
\hat{\pi}(E)\ge
t-s+\frac{-w_{2}}{2} - 1.
\end{equation*}
As $\hat \pi(E)$ is an integer, this yields
\begin{equation*}
\hat{\pi}(E)
\ge t-s+ \Ceil{\frac{-w_{2}}{2}} - 1
.
\end{equation*}
As $\pi_1(E) + \pi_2(E) = v_2 + w_1$ by Remark~\ref{rem:projections}, now it 
remains to use~\eqref{eq:pi-3} in order to obtain~\eqref{eq:3-6D}

Now assume that the frame does not form small angle with~$Q$.  Let us check 
that in this case \eqref{eq:3-6A}--\eqref{eq:3-6D} hold with $t = s = 0$.

Inequality~\eqref{eq:3-6A} becomes trivial, and~\eqref{eq:3-6B} follows from 
the definition of a splitting frame.  Inequality~\eqref{eq:3-6C} becomes
$$-v_{N1}<v_{N2}.$$
It is true, since otherwise by Proposition~\ref{pr:sa} the frame would form 
small angle with~$Q$.  To prove~\eqref{eq:3-6D}, it suffices to apply the 
proved part of the theorem to $Q$ and the frame $(\mathbf o; \mathbf f_2, 
\mathbf f_1)$, which forms small angle with $Q$ by Proposition~\ref{pr:sa}.  
Indeed, with certain $\tilde t$ and $\tilde s$ by virtue of~\eqref{eq:3-6D} 
and~\eqref{eq:3-6A} we have:
$$2N\le w_1 + v_2 -\tilde t+ \tilde s \le v_2 + w_1,$$
so \eqref{eq:3-6D} holds for $(\mathbf o; \mathbf f_2, \mathbf f_1)$ with $t = 
s = 0$ as claimed.
\end{proof}

\begin{proof}[Proof of Theorem~\ref{th3-8}]
It suffices to apply Lemma~\ref{lem4-7'}, write inequality~\eqref{eq1-lem4-7'} 
in the form
$$\pi_1(E) + \pi_2(E) -2N\ge1,$$
and substitute $\pi_1(E) = w_1$ and $\pi_2(E) = v_2$ according to 
Remark~\ref{rem:projections}.
\end{proof}

\bibliographystyle{plain}
\bibliography{lit}

\end{document}